\newtheorem{theorem}{Theorem}[section]
\newtheorem{proposition}[theorem]{Proposition}
\newtheorem{lemma}[theorem]{Lemma}
\newtheorem{corollary}[theorem]{Corollary}
\newtheorem{definition}[theorem]{Definition}
\newtheorem{problem}[theorem]{Problem}
\theoremstyle{remark}
\newtheorem{remark}[theorem]{Remark}
\newtheorem{example}[theorem]{\bf Example}
\renewenvironment{proof}{{\noindent\bf Proof.}}{\hfill $\Box$\par\vskip3mm}
\newcommand{\Hom}{{\rm Hom}}
\newcommand{\Ext}{{\rm Ext}}
\newcommand{\Aut}{{\rm Aut}\,}
\newcommand{\Top}{{\rm Top}\,}
\newcommand{\Rad}{{\rm Rad}\,}
\newcommand{\Soc}{{\rm Soc}\,}
\newcommand{\ad}{{\rm ad}\,}
\newcommand{\Cc}{\mathcal{C}}
\newcommand{\Dd}{\mathcal{D}}
\newcommand{\Mm}{\mathcal{M}}
\newcommand{\modd}{\textendash\operatorname{mod}}
\newcommand{\cmodd}{\textendash\operatorname{comod}}
\newcommand{\Rep}{\operatorname{Rep}}
\newcommand{\Span}{\operatorname{Span}}
\newcommand{\Set}{\mathbf{Set}}
\def\KK{{\mathbb K}}
\begin{document}
\title[Pointed Hopf Algebras of discrete corepresentation type]{Pointed Hopf Algebras of discrete corepresentation type}


\begin{abstract}
We classify pointed Hopf algebras of discrete corepresentation type over an algebraically closed field $\KK$ with characteristic zero. For such algebras $H$, we explicitly determine the algebra structure up to isomorphism for the link indecomposable component $B$ containing the unit. It turns out that $H$ is a crossed product of $B$ and a certain group algebra.
\end{abstract}

\author{Miodrag Cristian Iovanov, Emre Sen, Alexander Sistko, Shijie Zhu\\}
\thanks{2010 \textit{Mathematics Subject Classifications} 16T05, 16T15, 16G60}
\date{}
\keywords{Hopf algebra, finite (co)representation type,  discrete corepresentation type, separable extension}
\maketitle



\newcommand{\margincolor}{red}

\addtolength{\marginparwidth}{-10mm}

\newcounter{margincounter}
\setcounter{margincounter}{1}

\newcommand{\marginnum}{\textcolor{\margincolor}{\begin{picture}(0,0)\put(5,3){\circle{13}}\end{picture}\arabic{margincounter}}}

\newcommand{\margin}[1]
{\marginnum\marginpar{\textcolor{\margincolor}{\arabic{margincounter}.}\,\,\tiny #1}\addtocounter{margincounter}{1}}

\setcounter{tocdepth}{1} 
\tableofcontents

\section{Introduction}
\subsection{Background}
 In this paper, we study pointed Hopf algebras and the representation types of the comodule categories. In representation theory, due to Gabriel, each basic algebra can be constructed by a quiver with some relations.  Dually, pointed coalgebras can be realized as subcoalgebras of a path coalgebra of some quiver \cite{CM, DNR}. This allows us to construct pointed Hopf algebras using a basis of paths of some quiver. Although for a finite dimensional coalgebra $C$, the category of right comodules over $C$ is equivalent to the category of left modules over its dual algebra $C^*$, it is more natural to consider comodules in the study of quantum groups or algebraic groups, as they correspond to the rational representations. For this reason, we focus on analyzing the comodule categories and their corepresentation types.  We are going to remind the readers some basic facts about Hopf algebras and representations in $\S\ref{Sec:preliminaries}$.


One of the most important topics in representation theory is the classification of indecomposable (co)modules over a (co)algebra. A  (co)algebra is finite representation type if there are only finitely many non-isomorphic indecomposable (co)modules over the (co)algebra. The (co)module category of a finite (co)representation type (co)algebra is considered easiest to understand. 
The study of representation finite type algebras dates back to the classification of irreducible representations of finite groups and Gabriel's classification of finite representation type path algebras.
 In \cite{LL}, the authors classified finite dimensional basic (pointed) Hopf algebras over an algebraically closed field $\KK$ which are finite (co)representation type. Together with an earlier result of the classification of monomial Hopf algebras \cite{ChHYZ}, one can conclude that a finite dimensional basic Hopf algebra over an algebraically closed field $\KK$ is finite representation type if and only if it is monomial.  
 
However, concerning infinite dimensional Hopf algebras, it is no longer appropriate to discuss the (co)representation finiteness. Instead, we shall consider (co)representation discrete type (co)algebras as analogs of finite dimensional (co)representation finite type (co)algebras.  Following Simson, we formally introduce this definition:
 
\begin{definition}\label{defn:discrete type}
Let $C$ be a pointed coalgebra. We say that $C$ is of discrete corepresentation type, if for any finite dimension vector $\underline{d}$, there are only finitely many indecomposable $C$-comodules of dimension vector $\underline{d}$.
\end{definition}

Our main task in this paper is to classify pointed Hopf algebras of discrete corepresentation type.
We note immediately that according to the validity of the Brauer-Thrall problem over algebraically closed fields, if $C$ is a finite dimensional coalgebra, $C$ is of discrete corepresentation type if and only if $C^*$ is an algebra of finite representation type. Any coalgebra $C$ is of discrete corepresentation type if and only if every finite dimensional subcoalgebra $D$ of $C$ is of finite corepresentation type, which is to say that $D^*$ is of finite representation type (See Proposition \ref{discrete type dual}).  Hence discrete corepresentation type can be regarded as a situation which is ``locally" finite corepresentation type. This can, in fact, be rigorously interpreted and formulated in terms of non-commutative localization \cite{IP}. 
 
Notice that one class of corepresentation discrete type pointed Hopf algebras has been classified in \cite {IJ}.
\begin{definition}\label{coserial defn}
A left (right) comodule $M$ is called {\bf uniserial} if its lattice of subcomodules is a chain. A coalgebra is {\bf left (right) serial} if it is a direct sum of left (right) uniserial comodules. A coalgebra is called {\bf serial} if it is left and right serial. A Hopf algebra $H$ is called {\bf coserial} if it is serial as a coalgebra.
\end{definition}
It turns out that  coserial  pointed Hopf algebras are of corepresentation discrete type. We show there are another classes of corepresentation discrete type pointed Hopf algebras other than the coserial ones (see Theorem \ref{ext quiver} below.).

\subsection{Outline of the paper}
In section 2, we recall some basic facts about coalgebras, comodules and $\Ext$-quivers of  comodule categories. 

In section 3, we introduce the notion of covering maps between coalgebras (Definition \ref{covering def}), which comes from separable extensions of algebras. It serves as a main tool determining corepresentation finite/discrete type due to the Lemma below:

 \begin{lemma} 
If $\pi:C\to D$ is a covering map, then $\pi^*: D^*\to C^*$ is a separable extension of algebras. Hence if  $D$ is corepresentation finite, then so is $C$.
\end{lemma}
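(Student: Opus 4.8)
The plan is to establish the two assertions in turn.

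For the first, I would read off from Definition~\ref{covering def} the data carried by a covering map $\pi\colon C\to D$: it is, in particular, a surjective morphism of coalgebras together with a $C$-bicomodule retraction $\sigma\colon C\,\square_D\,C\to C$ of the corestricted comultiplication $\Delta_C\colon C\to C\,\square_D\,C$, so that $\sigma\circ\Delta_C=\operatorname{id}_C$. Since $\pi$ is surjective, $\pi^*\colon D^*\to C^*$ is an injective algebra homomorphism, and $D^*$ is a subalgebra of $C^*$; the goal is to convert $\sigma$ into a $C^*$-bimodule section of the multiplication $m\colon C^*\otimes_{D^*}C^*\to C^*$, i.e.\ into a separability datum for $D^*\subseteq C^*$. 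Dualization is well behaved on finite-dimensional pieces, so I would write $C=\varinjlim_\lambda C_\lambda$ over the directed family of finite-dimensional subcoalgebras of $C$, observe that $\pi$ restricts to covering maps $C_\lambda\to\pi(C_\lambda)$ between finite-dimensional coalgebras (immediate from the definition), and dualize each of these using the genuine identification $(C_\lambda\,\square_{\pi(C_\lambda)}\,C_\lambda)^*\cong C_\lambda^*\otimes_{\pi(C_\lambda)^*}C_\lambda^*$ to turn $\sigma_\lambda$ into a separability idempotent $e_\lambda\in C_\lambda^*\otimes_{\pi(C_\lambda)^*}C_\lambda^*$. The family $\{e_\lambda\}$ is compatible under the transition maps, and its limit is a separability datum for $\pi^*$; hence $\pi^*\colon D^*\to C^*$ is a separable extension of algebras.

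For the second assertion I would invoke the standard consequence of separability: restriction of scalars along a separable extension is a separable functor, so for $D^*\subseteq C^*$ every $C^*$-module $M$ is a direct summand of $C^*\otimes_{D^*}\operatorname{res}(M)$, where $\operatorname{res}(M)$ denotes $M$ viewed as a $D^*$-module. Transporting this through the equivalence between rational modules and comodules, every finite-dimensional right $C$-comodule $M$ is a direct summand of the $C$-comodule induced from its corestriction $\operatorname{res}(M)$ to $D$ (after replacing the induced module by its maximal rational submodule, which does not disturb $M$ as a summand). Now suppose $D$ is of finite corepresentation type and let $N_1,\dots,N_t$ be representatives of the finitely many isomorphism classes of finite-dimensional indecomposable $D$-comodules. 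For a finite-dimensional $C$-comodule $M$, the comodule $\operatorname{res}(M)$ is finite-dimensional, hence isomorphic to a finite direct sum of the $N_i$, so $M$ is a direct summand of a finite direct sum of the fixed comodules $Y_i$ obtained by inducing $N_i$; and each $Y_i$ is finite-dimensional because, as the first assertion shows via the finite-dimensional subcoalgebras, a covering map is a ``finite'' separable extension of the pro-finite-dimensional algebras $D^*\subseteq C^*$. By Krull--Schmidt, every indecomposable finite-dimensional $C$-comodule is therefore isomorphic to one of the finitely many indecomposable summands of $Y_1,\dots,Y_t$, so $C$ has only finitely many indecomposable comodules, i.e.\ $C$ is of finite corepresentation type.

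The step I expect to be the main obstacle is the infinite-dimensional bookkeeping in the first paragraph — identifying the linear dual of a cotensor product with a tensor product over $D^*$, which is literally valid only in finite dimensions — together with the finiteness used at the end of the second paragraph, namely that the induced comodules $Y_i$ are finite-dimensional. Both are handled uniformly by reducing everything to the directed system of finite-dimensional subcoalgebras of $C$ and their images in $D$, over which the relevant dualities and finiteness statements are elementary. (If one weakens ``covering map'' so as to allow infinite coverings, the same argument still yields that $C$ is of discrete, rather than finite, corepresentation type.) The remaining ingredients — the comodule/rational-module dictionary and the separable-functor criterion — are standard and enter only formally.
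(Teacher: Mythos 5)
Your proof of the first assertion has a genuine gap, and it sits exactly where all the content of the lemma lies. Definition \ref{covering def} is purely combinatorial: a covering map is a coalgebra map between finite-dimensional subcoalgebras of path coalgebras, carrying a chosen diamond basis onto a diamond basis, such that two distinct diamonds in the same fibre never share a source or a target. It does \emph{not} come equipped with a $C$-bicomodule retraction $\sigma$ of the corestricted comultiplication $C\to C\,\square_D\,C$; in the finite-dimensional setting the existence of such a retraction is precisely the dual formulation of ``$\pi^*:D^*\to C^*$ is separable'' (Lemma \ref{central idempotent}), so by ``reading off'' $\sigma$ from the definition you have assumed the very statement to be proved. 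A telling symptom is that nothing in your first paragraph ever uses conditions (1) and (2) of the definition.

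What the paper does instead is construct the separability element explicitly: take $e=\sum_{g} e_g^*\otimes_{D^*}e_g^*$, summed over the primitive idempotents of $D^*$ dual to the grouplikes of $D$. Clearly $u(e)=1$, and one must check that $e$ commutes with $x^*$ for every diamond $x$ in the diamond basis of $C$. This is where the covering condition enters: the diamonds $x_i$ in the fibre $\pi^{-1}(\pi(x))$ have pairwise distinct sources and pairwise distinct targets, so multiplying $\sum_i x_i^*=\pi^*(\pi(x)^*)\in D^*$ by the vertex idempotent dual to $t(x)$ (resp.\ $s(x)$) isolates the single term $x^*$, which lets one slide $x^*$ across the tensor sign over $D^*$. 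That computation is the heart of the lemma and is absent from your proposal. Two smaller remarks: covering maps are by definition maps between finite-dimensional coalgebras, so the direct-limit bookkeeping you flag as the main obstacle is not needed; and your second paragraph is correct but is exactly the content of the cited Lemma \ref{sep fin type}, so once separability is actually established the second assertion is immediate.
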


In section 4, assuming $H$ is a 
pointed Hopf algebra of discrete corepresentation type with $B$ its link-indecomposable component containing $1$, we classify $\Ext$-quivers of the comodule category $\Mm^B$ in {\bf Theorem \ref{ext quiver}}. It turns out that the $\Ext$-quiver is either of types 
\begin{enumerate}
\item a single point;
 \item an oriented cycle $\widetilde A_n$; 
 \item double infinite quiver $_\infty A_\infty$;  which corresponds to $B$ being a coserial Hopf algebra;
 \item a certain quiver which satisfies for each vertex there are exactly two arrows coming in and going out. 
\end{enumerate}
Cases (1)-(3) coincides with the case  when $H$ is a coserial Hopf algebra, which has been classified in \cite{IJ}.

We deal with the remaining case in section 5.  In {\bf Theorem \ref{B is discrete}}, we determine the algebra structures of link indecomposable discrete corepresentation type Hopf algebras. Up to isomorphism, such a Hopf algebra is isomorphic to one of the algebra $B^{m,n}(\lambda,s,t, k)$ in the list of {\bf Theorem \ref{classify B}}. For each isomorphism class, the automorphism group of the Hopf algebra is explicitly computed as well.

In section 6,  we  provide a full list of  pointed Hopf algebras of discrete corepresentation type over an algebraically closed field $\KK$ with $char \KK=0$. They are either coserial or isomorphic to the smash product of a Hopf algebra $B^{m,n}(\lambda,s,t, k)$ in the list of Theorem \ref{classify B} with a group algebra (see {\bf Theorem \ref{H main}}).



{
\section{Preliminaries}\label{Sec:preliminaries}

We recall a few basic facts about Hopf algebras, representations and comodule categories. We refer the reader to the textbooks \cite{DNR,R,M} for further details.

\subsection{Pointed coalgebras and coradical filtrations}
A quiver $Q=(Q_0,Q_1)$ is a directed graph with $Q_0$ the set of vertices and $Q_1$ the set of arrows.
 For any arrow $\alpha\in Q_1$, denote by $s(\alpha)$ for the source of the arrow and $t(\alpha)$ the target of the arrow. The path coalgebra $\KK Q$ of a quiver $Q$ over a field $\KK$ is defined as follows. As a vector space, it   has a basis consisting of all the paths in $Q$. The comultiplication $\Delta$ is defined as

$$\Delta(p)=\sum\limits_{p=(q|r)}q\otimes r$$
where $(q|r)$ is the concatenation of paths $q$ followed by $r$ and the sum is done over all the possible ways of splitting the path $p$ as a concatenation of two subpaths. The co-unit of the path coalgebra $\KK Q$ is $\varepsilon(p)=\delta_{{\rm length}(p),0}$. In this paper, the path algebra of the quiver $Q$ will be denoted by $\KK[Q]$ to differentiate it from the path coalgebra. We note that the path algebra and path coalgebra structures are usually not compatible in any bialgebra way, but rather dual to each other (see also \cite{DIN,DIN2} for relations between the path algebra and path coalgebra, and further details on these). In particular, as we always consider a Hopf algebra $H$ as a sub-coalgebra of a certain path coalgebra, the path $(q|r)$ is different from the multiplication $r\cdot q$ or $q\cdot r$ in the Hopf algebra $H$.  In general, we denote by $(\alpha_1 |\alpha_2|\cdots|\alpha_n)$ the path $\cdot\stackrel{\alpha_1}\rightarrow \cdot\stackrel{\alpha_2}\rightarrow \cdots \stackrel{\alpha_n}\rightarrow \cdot$.


%
%
%
%
Denote by ${}^{C}\Mm$ (resp. $\Mm^C$) the category of left (resp. right) comodules over the coalgebra $C$. We also recall a few facts about the category ${}^{\KK Q}\Mm$.    By results of \cite{CKQ}, (see also \cite{DIN}), the category ${}^{\KK Q}\Mm$ of left $\KK Q$-comodules is equivalent to the category of locally nilpotent right $\KK[Q]$-modules (equivalently, locally nilpotent representations of the opposite quiver of $Q$). We recall that a (right) module $M$ is locally nilpotent if given any $x\in M$, there exists a monomial ideal $I$ of $\KK[Q]$ of finite codimension such that $Ix=0$. (See also \cite{si1}). In particular, if $Q$ is a finite acyclic quiver, then ${}^{\KK Q}\Mm$ is equivalent to the category of right $\KK[Q]$-modules (equivalently $\Rep(Q^{op}))$.

\begin{definition}\label{Ext-quiver}
 Let $C$ be a pointed $\KK$-coalgebra with a group of grouplikes $G=G(C)$. 
 Denote by $P(g,h)$  the space of $g$-$h$ skew primitive elements: $P(g,h)=\{x\in C | \Delta(x)=g\otimes x + x\otimes h\}$. The {\bf $\Ext$-quiver} $Q$ of the comodule category $\Mm^C$ of $C$ is  defined as follows: if $g,h\in G(C)$ are distinct grouplikes in $C$, then they represent distinct vertices in $Q$; the number $n$ of arrows from $g$ to $h$ is equal to $\dim(P(g,h))-1$. 
\end{definition}

By results of \cite{CM,DNR} which are dual to classical results of Gabriel for finite dimensional algebras, any pointed coalgebra embeds in a quiver coalgebra, where the quiver can be chosen to be the Ext-quiver of the comodule category $\Mm^H$ of $H$:

\begin{proposition}
 If $C$ is a finite dimensional coalgebra over an algebraically closed field $\KK$ and  $Q$ is the $\Ext$-quiver of $C$, then $C$ is isomorphic to a subcoalgebra of the path coalgebra $\KK Q$. At the same time, $C^*$ is a finite dimensional algebra, which is isomorphic to $\KK[Q^{op}]/I$ for some admissible ideal $I$.
 \end{proposition}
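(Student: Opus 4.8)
The plan is to reduce the statement to the classical theorem of Gabriel for finite-dimensional algebras by passing to $\KK$-linear duals. Since $\dim_{\KK}C<\infty$, the dual space $A:=C^{*}$ is a finite-dimensional $\KK$-algebra, and the canonical evaluation map $C\to (C^{*})^{*}=A^{*}$ is an isomorphism of coalgebras, so nothing is lost in passing back and forth. (The statement implicitly assumes $C$ is pointed, as otherwise its $\Ext$-quiver is undefined; see Definition~\ref{Ext-quiver}.) The standard duality between the coradical filtration of $C$ and the radical filtration of $A$, namely $C_{n}=(\operatorname{rad}^{n+1}A)^{\perp}$, shows that $C$ is pointed precisely when $A/\operatorname{rad}A$ is a product of copies of $\KK$, i.e.\ (as $\KK=\bar\KK$) $A$ is basic; moreover the grouplikes of $C$ are then in bijection with the primitive idempotents of $A$ up to conjugacy.

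Next I would match the quivers. On the coalgebra side, the $\Ext$-quiver $Q$ of $\Mm^{C}$ has $G(C)$ as vertices and $\dim_{\KK}\Ext^{1}_{\Mm^{C}}(\KK_{h},\KK_{g})$ arrows from $g$ to $h$; this is what the count $\dim P(g,h)-1$ amounts to, via $P(g,h)/\KK(g-h)\cong\Ext^{1}_{\Mm^{C}}(\KK_{h},\KK_{g})$ for $g\neq h$. On the algebra side, the Gabriel quiver $\Gamma$ of $A$ in a presentation $A\cong\KK[\Gamma]/I$ has the primitive idempotents $e_{i}$ as vertices and $\dim_{\KK}e_{j}(\operatorname{rad}A/\operatorname{rad}^{2}A)e_{i}$ arrows from $i$ to $j$. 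The pairing between $C$ and $A$ restricts to a perfect pairing of $C_{1}/C_{0}$ with $\operatorname{rad}A/\operatorname{rad}^{2}A$, under which the piece governing the $(g,h)$-arrows of $Q$ is matched with $e_{g}^{*}(\operatorname{rad}A/\operatorname{rad}^{2}A)e_{h}^{*}$ --- and by the convention just displayed, that is the space governing the arrows $h\to g$ of $\Gamma$. Hence $\Gamma=Q^{op}$. (Pinning this orientation down --- $Q^{op}$ rather than $Q$ --- is the one slightly delicate point.)

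Now I would invoke Gabriel's theorem: a basic finite-dimensional algebra over an algebraically closed field is a bound path algebra $\KK[\Gamma]/I$ with $I$ admissible. With $\Gamma=Q^{op}$ this gives $C^{*}=A\cong\KK[Q^{op}]/I$, the second assertion. For the first I would dualize back. The surjection $\KK[Q^{op}]\twoheadrightarrow\KK[Q^{op}]/I=C^{*}$ dualizes to an injection $C\cong (C^{*})^{*}\hookrightarrow\KK[Q^{op}]^{*}$ whose image is the annihilator of $I$; since $I$ is admissible it contains all paths of length $\geq N$ for some $N$, so these functionals are supported on the finitely many paths of length $<N$. The span of all finitely supported functionals on $\KK[Q^{op}]$, with the transpose of multiplication as comultiplication, is a subcoalgebra of the finite dual $\KK[Q^{op}]^{\circ}$ that is canonically isomorphic to the path coalgebra $\KK Q$ --- the path $p$ of $Q$ corresponding to the functional dual to the reversed path $p^{op}$ in $Q^{op}$, and one checks that this sends concatenation of paths in $Q^{op}$ to the splitting comultiplication of $\KK Q$. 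Being the transpose of an algebra homomorphism, our embedding is a homomorphism of coalgebras, so $C$ is isomorphic to a subcoalgebra of $\KK Q$.

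I expect the real work to be organizational rather than conceptual: keeping the left/right and source/target conventions consistent so that $Q^{op}$, not $Q$, appears in the presentation of $C^{*}$, and dealing with the fact that $\KK[Q^{op}]$ and $\KK Q$ are mutually dual only locally --- when $Q$ has oriented cycles, $\KK[Q^{op}]$ is infinite-dimensional and one must work inside its finite dual. Alternatively, one can bypass Gabriel's theorem and construct the embedding $C\hookrightarrow\KK Q$ directly along the coradical filtration: identify $C_{0}=\KK G(C)$ with the vertex space, choose in each $P(g,h)$ a complement to $\KK(g-h)$ in bijection with the arrows $g\to h$, extend the resulting map on $C_{1}$ to all of $C$ compatibly with comultiplication, and deduce injectivity from the fact that a coalgebra map which is injective on $C_{1}$ is injective; this is essentially the route of \cite{CM,DNR}.
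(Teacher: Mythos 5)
The paper states this proposition without proof, presenting it as a consequence of \cite{CM,DNR} and as ``dual to classical results of Gabriel''; your argument is exactly that dualization, and it is correct — including the two genuinely delicate points, namely that the orientation conventions force $Q^{op}$ (not $Q$) on the algebra side, and that when $Q$ has oriented cycles the embedding must land in the finitely supported (graded-dual) part of $\KK[Q^{op}]^{*}$ rather than the full dual, which works because $Q$ is a finite quiver and $I$ is admissible, so every functional killing $I$ is supported on the finitely many paths of length below the admissibility bound. Your closing alternative — constructing the embedding directly along the coradical filtration on the coalgebra side — is in fact the route taken in the cited references, so both halves of your write-up are consistent with what the paper invokes.
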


For a finite dimensional pointed coalgebra $C$,  its coradical filtration is a series of sub-coalgebras
$$
0\subseteq C_0\subseteq C_1\subseteq \cdots\subseteq C_n=C
$$ defined inductively by $C_0=\KK G(C)$ consisting of linear combinations of grouplikes and  $x\in C_{i+1}$ if $\Delta(x)=C_i\otimes C+C\otimes C_0$. In particular, $C_1$ is a sub-coalgebra of $C$ which consists all the grouplikes and skew-primitives. The length of the coradical filtration is called the Loewy length of $C$. The coradical filtration of $C$ is a dual of the radical filtration of $C^*$.

\subsection{Corepresentation discrete type coalgebras}
The following theorem is usually referred as the $2^{nd}$ Brauer-Thrall conjecture, which has been proved to hold true.

\begin{theorem}
A finite dimensional algebra over an algebraically closed field $\KK$ is either representation-finite type or there exists an infinite sequence of numbers $d_i\in\mathbb N$ such that for each $i$, there exists an infinite number of non-isomorphic indecomposable modules with $\KK$ dimensional $d_i$.
\end{theorem}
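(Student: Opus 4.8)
The plan is to carry out the full Brauer--Thrall program in two stages: first prove that infinite representation type already forces the existence of indecomposable $A$-modules of arbitrarily large finite $\KK$-dimension (the first Brauer--Thrall conjecture), then upgrade this to the \emph{strongly unbounded} conclusion in the statement. For the first stage, assume toward a contradiction that every indecomposable $A$-module has $\KK$-dimension at most some fixed $n$. The Harada--Sai lemma --- any composite of $2^{n}$ non-isomorphisms between indecomposables of length at most $n$ is zero --- then forces the infinite radical $\operatorname{rad}^{\infty}(\operatorname{mod} A)$ to vanish, and by Auslander's theorem the vanishing of $\operatorname{rad}^{\infty}(\operatorname{mod} A)$ implies that $A$ is of finite representation type, contrary to hypothesis. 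So indecomposables of unbounded dimension exist.

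Next I would reduce the whole theorem to the single assertion that an infinite-representation-type algebra admits, in some one fixed dimension $d_{0}$, infinitely many pairwise non-isomorphic indecomposables. From such a collection one can distill a genuine $\KK$-parametrized family $\{M_{\lambda}\}_{\lambda\in\KK}$, equivalently an infinite-dimensional indecomposable of finite endolength --- a generic module in the sense of Crawley--Boevey --- and the existence of a generic module is known to imply that there are infinitely many dimensions $d$ each of which supports infinitely many non-isomorphic indecomposables, which is exactly the sequence $d_{i}$ demanded by the statement. (Concretely, one amalgamates or ``sweeps'' copies of the family to climb through dimensions.)

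To produce such a $d_{0}$ I would use Auslander--Reiten theory together with covering theory, reducing first to a minimal representation-infinite algebra (all of whose proper quotients are representation-finite) and then analysing the shapes of the components of its Auslander--Reiten quiver, passing if necessary to a universal cover where the translation-quiver combinatorics is controlled. A preprojective or preinjective component, via the first Brauer--Thrall conjecture in the cover and a push-down argument, yields unboundedly large indecomposables lying in finitely many shapes, which one then spreads into a $\KK$-family; a regular component, by the constraints on the shape of such components, forces a tube or a module carrying a nonsplit self-extension, and in either case a $\KK$-parametrized family of indecomposables of fixed dimension emerges. Either branch supplies $d_{0}$, and combined with the previous paragraph the theorem follows.

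The main obstacle is this last step --- it is the genuine content of the second Brauer--Thrall conjecture, namely bridging the gap between ``infinitely many indecomposables in total'' and ``an honest one-parameter family sitting in a single dimension.'' Over an algebraically closed field this is the deep and technically heavy ingredient: it was first settled by Nazarova--Roiter through matrix problems and differentiation algorithms, and later reproved by Bautista, Bongartz, Fischbacher and others via covering theory and the theory of multiplicative bases. Any self-contained treatment must reproduce one of these substantial machineries, which is why here the theorem is invoked as a citation rather than proved. The hypothesis that $\KK$ is algebraically closed is genuinely needed --- over general fields the strongly unbounded conclusion can fail.
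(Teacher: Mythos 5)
You should first note that the paper itself gives no proof of this statement: it is quoted verbatim as the (proved) second Brauer--Thrall conjecture and used as a black box, so there is no argument in the text to compare yours against. Your decision to defer the heart of the matter to the literature is therefore consistent with how the paper treats the result. As an outline your first stage is accurate --- the Harada--Sai lemma (the sharp bound is that $2^{n}-1$, not $2^{n}$, non-isomorphisms between indecomposables of length at most $n$ compose to zero, but this is immaterial) together with Auslander's characterization of finite representation type by the vanishing of $\operatorname{rad}^{\infty}(\operatorname{mod}A)$ is the standard modern proof of the first Brauer--Thrall conjecture, and the passage from ``one dimension $d_{0}$ carrying infinitely many indecomposables'' to ``infinitely many such dimensions'' via generic modules is indeed a theorem of Crawley--Boevey valid over an algebraically closed field.

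The genuine gap is the one you name yourself: nothing in the proposal actually produces the dimension $d_{0}$. The third paragraph gestures at minimal representation-infinite algebras, Auslander--Reiten components, and covering theory, but every branch of that case analysis (``a preprojective component yields unboundedly large indecomposables lying in finitely many shapes, which one then spreads into a $\KK$-family''; ``a regular component forces a tube or a nonsplit self-extension'') is asserted rather than argued, and each assertion is itself a substantial theorem. Moreover the reduction you lean on in the second paragraph is not free either: extracting an honest one-parameter family, and hence a generic module, from an infinite set of non-isomorphic $d_{0}$-dimensional indecomposables already uses the geometry of module varieties over an algebraically closed field, and Crawley--Boevey's results on generic modules are of comparable depth to the statement being proved. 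So the proposal is a correct and well-informed map of the known proofs (Nazarova--Roiter; Bautista, Bongartz, Bretscher--Todorov, Fischbacher via multiplicative bases and covering theory), but it is not a proof; since the paper also only cites the theorem, the appropriate resolution is to replace the sketch by an explicit reference rather than attempt to close the gap here.
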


We omit the dual statement of Brauer-Trall conjecture for finite-dimensional coalgebras.
We will heavily use the following lemma when proving statements about finite/discrete type. Note that a quiver is called {\bf bipartite} if each vertex is either a sink or source.
\begin{lemma} \label{non discrete type}
Let $H$ be a Hopf algebra and $Q$ the $\Ext$-quiver of $\Mm^H$. Suppose there is a finite subquiver $Q'\subseteq Q$ satisfying the sub-coalgebra $\KK Q'_1$ (in the coradical filtration) is infinite corepresentation type, then $H$ is not of  corepresentation discrete  type. In particular, if $Q'$ is a bipartite non-Dynkin type quiver, then $H$ is not discrete corepresentation type.
\end{lemma}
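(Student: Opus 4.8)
The plan is to realise $\KK Q'_1$ as a genuine finite-dimensional subcoalgebra of $H$ and then apply the ``local'' characterisation of discrete type. First I would recall that, by the embedding theorem dual to Gabriel's (the Proposition following Definition~\ref{Ext-quiver}), $H$ embeds as a subcoalgebra of the path coalgebra $\KK Q$ of its own $\Ext$-quiver $Q$, and that, precisely because $Q$ is the $\Ext$-quiver of $\Mm^H$, this embedding carries the first term of the coradical filtration onto the degree-$\le 1$ part: $H_1 \cong \KK Q_1 = \KK(Q_0\cup Q_1)$. Since $Q'\subseteq Q$ is a subquiver, the source and target of every arrow of $Q'$ lie in $Q'_0$, so the span $\KK Q'_1 = \KK(Q'_0\cup Q'_1)$ is closed under the comultiplication ($\Delta(g)=g\otimes g$ on a vertex, $\Delta(\alpha)=s(\alpha)\otimes\alpha+\alpha\otimes t(\alpha)$ on an arrow); hence $\KK Q'_1$ is a subcoalgebra of $H_1\subseteq H$, and it is finite-dimensional because $Q'$ is finite.

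Next I would invoke Proposition~\ref{discrete type dual}: a coalgebra is of discrete corepresentation type if and only if every one of its finite-dimensional subcoalgebras $D$ is of finite corepresentation type (equivalently $D^*$ is of finite representation type). Taking $D=\KK Q'_1$, which by hypothesis is \emph{not} of finite corepresentation type, forces $H$ not to be of discrete corepresentation type. Note that the Hopf structure plays no role in this part of the argument.

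For the ``in particular'' clause it only remains to check that a finite bipartite non-Dynkin quiver $Q'$ makes $\KK Q'_1$ of infinite corepresentation type. Since $Q'$ is bipartite, every vertex is a sink or a source, so $Q'$ admits no path of length $\ge 2$; therefore $\KK Q'_1$ coincides with the full path coalgebra $\KK Q'$, which is finite-dimensional because $Q'$ is finite and acyclic. Then the right $\KK Q'$-comodules are the finite-dimensional left modules over $(\KK Q')^*\cong \KK[(Q')^{\mathrm{op}}]$, a finite-dimensional hereditary algebra, so the corepresentation type of $\KK Q'$ equals the representation type of $\KK[(Q')^{\mathrm{op}}]$. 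By Gabriel's theorem this is finite precisely when the underlying graph of $(Q')^{\mathrm{op}}$ — which is the underlying graph of $Q'$ — is a disjoint union of Dynkin diagrams of type $A$, $D$, $E$. As $Q'$ is non-Dynkin, $\KK Q'$ has infinitely many pairwise non-isomorphic indecomposable comodules, i.e. is of infinite corepresentation type, and the first part of the argument applies.

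The only genuinely delicate point is the first step: the identification $H_1\cong\KK Q_1$ and the verification that the degree-$\le 1$ truncation of the path coalgebra of a subquiver is again a subcoalgebra of $H$. Everything after that is a direct appeal to Proposition~\ref{discrete type dual}, the comodule–module duality for finite-dimensional coalgebras, and Gabriel's classification of hereditary algebras of finite representation type.
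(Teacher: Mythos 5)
Your proposal is correct and follows essentially the same route as the paper: identify $\KK Q'_1$ as a finite-dimensional subcoalgebra of $H_1\subseteq H$, then conclude via the Brauer--Thrall-type argument (which you invoke through Proposition~\ref{discrete type dual}, whereas the paper applies it directly to $\KK Q'_1$), and for the ``in particular'' clause observe that bipartiteness gives $\KK Q'_1=\KK Q'$ and appeal to Gabriel's theorem. The only difference is that you spell out the $H_1\cong\KK Q_1$ identification and the Gabriel step more explicitly than the paper does.
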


\begin{proof}
Since $Q$ is the Ext-quiver of $\Mm^H$,  $\KK {Q'}_1\subseteq \KK Q_1\subseteq H$ are extensions of sub-coalgebras. So there is an inclusion $\Mm^{\KK Q'_1}\to \Mm^H$ of comodule categories. But by assumption $\KK Q'_1$ is a finite dimensional coalgebra of infinite corepresentation type, so there is an infinite family of  $\KK Q'_1$ comodules with a dimension vector $\underline d$.  Therefore $\Mm^H$ also contains an infinite family of comodules with a dimension vector $\underline d$, hence not of discrete corepresentation type.

For the ``in particular'' case: notice that for a bipartite quiver $Q'$, $\KK Q'_1=\KK Q'$.
\end{proof}

As an immediate corollary,  the $\Ext$-quiver $Q$ of a discrete corepresentation type Hopf algebra is {\bf Schurian}, that is no more than one arrow between any two vertices, because otherwise there will be a Kronecker 2-quiver as a subquiver of $Q$.

In order to introduce a formal way to relate the category of finite dimensional representations of a quiver, (or a coalgebra of $\KK Q$) to abelian subcategories which are equivalent to categories of modules over some finite dimensional algebra, we will need to use the notion of coefficient coalgebra of an $C$-comodule, where $C$ is any coalgebra. Given a left $C$-comodule $M$, with comultiplication map $\rho:M\rightarrow C\otimes M$, its coefficient coalgebra $cf(M)$ is defined as the smallest subcoalgebra of $C$ such that $\rho(M)\subseteq cf(C)\otimes M$; see \cite[Chapter 2]{DNR}. We note that this is nothing else but the dual analogue of the annihilator; indeed, if $C$ is a finite dimensional coalgebra, the category of $C$-comodules is equivalent to that of right modules over the dual algebra $C^*$. If $M$ is a left $C$-comodule, then $cf(M)=\{x\in M | f(x)=0, \,\forall \,f\in {\rm ann}_{C^*}(M)\}$. In general, over any coalgebra, if $M$ is a left $C$-comodule, it is automatically a right $C^*$-module, and the annihilator of $M$ is ${\rm ann}_{C^*}M=\{f\in C^* | f(c)=0,\,\forall\, x\in cf(M)\}$. Given any dimension vector $\underline{d}$, denote by $cf(\underline{d})$ the smallest sub-coalgebra of $C$ such that all the $C$-comodules of dimension vector $\underline{d}$ have their coefficient coalgebra contained in $cf(\underline{d})$ \cite{IP}.


\begin{proposition}\label{discrete type dual}
If $C$ is any coalgebra, then $C$ is of discrete corepresentation type if and only if every finite dimensional subcoalgebra $D$ of $C$ is of finite corepresentation type.
\end{proposition}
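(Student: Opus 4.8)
The plan is to prove the two implications separately; the forward direction is short, and the reverse direction is reduced to showing that for each fixed dimension vector the coefficient coalgebra $cf(\underline{d})$ is finite-dimensional. For the forward direction, suppose $C$ is of discrete corepresentation type and let $D\subseteq C$ be a finite-dimensional subcoalgebra. The inclusion of comodule categories $\Mm^D\hookrightarrow\Mm^C$ is exact and fully faithful, so it preserves and reflects indecomposability and isomorphism, and it preserves dimension vectors because every simple subcoalgebra of $D$ is a simple subcoalgebra of $C$. If $D$ were not of finite corepresentation type, then $D^*$ would fail to be of finite representation type, so by the (dual of the) second Brauer--Thrall theorem quoted above there is an integer $d$ with infinitely many pairwise non-isomorphic indecomposable $D$-comodules of $\KK$-dimension $d$; since $D$ has only finitely many simple comodules, only finitely many dimension vectors have total dimension $d$, so infinitely many of these comodules share one dimension vector $\underline{d}$, and transporting them along $\Mm^D\hookrightarrow\Mm^C$ contradicts discreteness of $C$. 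Hence $D$ is of finite corepresentation type.

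For the reverse direction, assume every finite-dimensional subcoalgebra of $C$ is of finite corepresentation type and fix a dimension vector $\underline{d}$, writing $n:=|\underline{d}|$. Suppose first that we know $cf(\underline{d})$ is finite-dimensional. Then $cf(\underline{d})$ is a finite-dimensional subcoalgebra of $C$, hence of finite corepresentation type by hypothesis; every $C$-comodule of dimension vector $\underline{d}$ is a $cf(\underline{d})$-comodule, and indecomposability is reflected by $\Mm^{cf(\underline{d})}\hookrightarrow\Mm^C$, so there are only finitely many indecomposable $C$-comodules of dimension vector $\underline{d}$, and letting $\underline{d}$ vary yields discreteness of $C$. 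It therefore remains to prove that $cf(\underline{d})$ is finite-dimensional, and here I would record three facts about a finite-dimensional comodule $M$: (i) $M$ is faithful over $cf(M)^*$ --- this is exactly the minimality property defining $cf(M)$, dual to the annihilator; (ii) consequently, by the standard idempotent argument (if $eM=0$ for a primitive idempotent $e$ then $e\in\operatorname{ann}(M)=0$), every simple $cf(M)^*$-module occurs as a composition factor of $M$, so when $M$ has dimension vector $\underline{d}$ the simple subcoalgebras of $cf(M)$ lie in the fixed finite set $\{cf(S):\underline{d}_{[S]}>0\}$, whence $cf(\underline{d})\cap C_0$, which is the coradical of $cf(\underline{d})$ (a subcoalgebra of the cosemisimple coalgebra $C_0$ is again cosemisimple), is finite-dimensional; (iii) the radical of $cf(M)^*$ is nilpotent of index at most the Loewy length of $M$, hence at most $n$, so $cf(M)\subseteq C_{n-1}$ and therefore $cf(\underline{d})\subseteq C_{n-1}$ has Loewy length at most $n$.

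Granting (i)--(iii), suppose toward a contradiction that $cf(\underline{d})$ is infinite-dimensional. Dually to the corresponding fact for finite-dimensional algebras, a coalgebra with finitely many simple subcoalgebras, bounded Loewy length, and finite-dimensional first coradical layer is itself finite-dimensional (each layer of the coradical filtration embeds into a tensor power of the first coradical layer, and the filtration is finite); applied to $cf(\underline{d})$ this forces $cf(\underline{d})_1/cf(\underline{d})_0$ to be infinite-dimensional, so $\dim\Ext^1(S,T)\geq 2$ for some simple comodules $S,T$ of $cf(\underline{d})$. In the pointed case this says $\dim P(g,h)\geq 3$ for grouplikes $g,h$, and then $g$, $h$, and two skew-primitives in $P(g,h)$ independent modulo $g-h$ span a four-dimensional subcoalgebra isomorphic to the Kronecker coalgebra, whose dual is the Kronecker algebra --- of infinite representation type; in general one obtains a finite-dimensional ``generalized Kronecker'' subcoalgebra, again of infinite corepresentation type. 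This contradicts the hypothesis on $C$, so $cf(\underline{d})$ is finite-dimensional.

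The step I expect to be the main obstacle is precisely the finite-dimensionality of $cf(\underline{d})$, and inside it the passage from ``$cf(\underline{d})$ is infinite-dimensional'' to an honest (generalized) Kronecker subcoalgebra: one must first pin down that the coradical of $cf(\underline{d})$ is finite-dimensional and that its Loewy length is bounded --- which is where facts (i)--(iii) are used --- and then extract the Kronecker subcoalgebra, for which in the non-pointed case I would rather invoke the dual of the statement that a finite-dimensional algebra whose Gabriel quiver carries a double arrow is of infinite representation type than construct it by hand.
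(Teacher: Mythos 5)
Your proof is correct and follows the same skeleton as the paper's: the converse direction is exactly the paper's Brauer--Thrall argument transported along the fully faithful inclusion $\Mm^D\hookrightarrow\Mm^C$, and the forward direction is the same reduction to the finite-dimensionality of the coefficient coalgebra $cf(\underline{d})$. The one genuine difference is that the paper simply cites \cite[Lemma 2.6]{IP} for the statement that $cf(\underline{d})$ is finite dimensional, whereas you prove it. Your argument for that step is sound, and it is worth emphasizing that some such argument is genuinely needed: the finite-dimensionality of $cf(\underline{d})$ is \emph{not} an unconditional fact about coalgebras (for the path coalgebra of a quiver with one vertex and infinitely many loops, already the $2$-dimensional comodules have coefficient coalgebras whose sum is infinite dimensional), so the hypothesis that all finite-dimensional subcoalgebras are of finite corepresentation type must enter --- and in your proof it enters exactly where it should, via the Kronecker (double-arrow) subcoalgebra extracted from an infinite-dimensional layer $cf(\underline{d})_1/cf(\underline{d})_0$. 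The three auxiliary facts you isolate (faithfulness of $M$ over $cf(M)^*$, hence control of the simples and of the Loewy length of $cf(M)$, together with the embedding of the higher coradical layers into tensor powers of $C_1/C_0$) are all standard and correctly applied. So your write-up buys a self-contained proof at the cost of some length, while the paper's buys brevity at the cost of an external citation.
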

\begin{proof}
Assume every finite dimensional subcoalgebra $D$ of $C$ is finite corepresentation type. We note that given any dimension vector $\underline{d}$, the coefficient coalgebra $cf(\underline{d})$ is finite dimensional \cite[Lemma 2.6]{IP}, hence finite representation type. Therefore, the set of isomorphism classes of dimension $\underline{d}$ corepresentations is finite. That is, $C$ has discrete corepresentation type.
Conversely, assume there is a finite dimensional subcoalgebra $D\subseteq C$  which is infinite corepresentation type. Then Brall-Thrall conjecture asserts that there exists infinite number of non-isomorphic indecomposable $D$-comodules with dimension vector $\underline{d}$. Therefore, $C$ is not discrete corepresentation type. 
\end{proof}

\subsection{Decompositions of pointed Hopf algebras}
In what follows, all Hopf algebras considered will be pointed.  Let $Q$ be the $\Ext$-quiver of the comodule category $\Mm^H$. In general, $Q$ is not necessary a connected quiver. We are going to introduce the notion of link-indecomposable components following \cite{Mon}:

\begin{definition}\label{LIC}
Let $C$ be a pointed coalgebra. For two simple coalgebras $S$ and $T\subseteq C$, the wedge $S\wedge T= \Delta^{-1}(S\otimes C+C\otimes T)$. Define a quiver $\Gamma_C$ as the following:
\begin{enumerate}
\item the vertices are simple subcoalgebras of $C$,
\item there exists and arrow $S\to T$ if and only if $S\wedge T\neq S+T$.
\end{enumerate}
A {\bf link-indecomposable-component (L.I.C.)} $D$ is a subcoalgebra $D\subseteq C$ which is maximal with respect to $\Gamma_D$ being connected (as an undirected graph).
\end{definition}

\begin{remark}
It is known that for a pointed coalgebra $C$, $S$ is a simple subcoalgebras if and only if $S=\KK g$ for some $g\in G(C)$ and $S\wedge T\neq S+T$ for simple subcoalgebra $S=\KK g$ and $T=\KK h$ if and only if there is a non-trivial skew-primitive in $P(g,h)$. Therefore the quiver $\Gamma_C$ can be obtained by identifying multiple arrows of the $\Ext$-quiver of the comodule category $\Mm^C$ and a L.I.C. $D$ is a subcoalgebra which is maximal with respect to the $\Ext$-quiver of the comodule category $\Mm^D$ being connected.
\end{remark}

We have the following decomposition theorem of pointed Hopf algebras.
\begin{theorem} \label{B tensor G}\cite[Theorem 3.2]{Mon}
 Let $H$ be a pointed Hopf algebra and $B$ be the L.I.C. containing $1$. Then,
 \begin{enumerate}
 \item $B$ is a Hopf algebra.
 \item $G(B)$ is a normal subgroup of $G(H)$;
 \item $G(H)$ acts on $B$ by conjugation.
 \item  $H\cong B\#_\sigma \KK(G(H)/G(B))$ is a crossed product of $B$ and the group algebra $\KK(G(H)/G(B))$, where $\sigma:G(H)/G(B)\times G(H)/G(B)\to G(B)$ is a $2$-cocycle.
 \end{enumerate}
\end{theorem}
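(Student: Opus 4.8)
The plan is to analyze how the decomposition of $H$ into link-indecomposable components interacts with the Hopf-algebra structure maps, using the principle that a homomorphism of pointed coalgebras carries each link-indecomposable component into a single link-indecomposable component. First I would establish three families of automorphisms of $H$ attached to $g\in G(H)$: left translation $L_g\colon x\mapsto gx$, right translation $R_g\colon x\mapsto xg$, and conjugation $c_g\colon x\mapsto gxg^{-1}$. Each is a coalgebra automorphism, because $g$ is grouplike and $\Delta(g)=g\otimes g$ forces the required diagrams to commute; moreover $c_g$ is also an algebra automorphism, hence a Hopf automorphism. Consequently each of them permutes the link-indecomposable components of $H$. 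Since $L_g(1)=R_g(1)=g$ while $c_g(1)=1$, the component containing $g$ is $L_g(B)=R_g(B)=:B_g$, and $c_g(B)=B$. From $c_g(B)=B$, for $h\in G(B)$ the grouplike $ghg^{-1}=c_g(h)$ lies in $c_g(B)=B$, so $G(B)$ is a normal subgroup of $G(H)$, giving part (2); and $g\mapsto c_g|_B$ is the conjugation action of $G(H)$ on $B$, giving part (3).

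For part (1), the comultiplication $\Delta\colon H\to H\otimes H$, the multiplication $m\colon H\otimes H\to H$, and (viewed as $H\to H^{\mathrm{cop}}$) the antipode $S$ are all homomorphisms of coalgebras. The link-indecomposable components of $H\otimes H$ are the tensor products $E\otimes E'$ of components of $H$: each $E\otimes E'$ is a coalgebra direct summand, and it is link-indecomposable because a nontrivial skew-primitive $x\in P_E(g,h)$ yields a nontrivial skew-primitive $x\otimes g'\in P_{E\otimes E'}((g,g'),(h,g'))$, so the link quiver of $E\otimes E'$ is connected. Also $H^{\mathrm{cop}}$ has the same components as $H$, since reversing $\Delta$ only reverses the arrows of the link quiver. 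Applying the carrying principle to $1\otimes 1\in B\otimes B$ and to $1\in B$ gives $m(B\otimes B)\subseteq B$ and $S(B)\subseteq B$; together with $1\in B$ and $\Delta(B)\subseteq B\otimes B$, the component $B$ is a Hopf subalgebra of $H$ (with antipode $S|_B$), which is part (1).

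For part (4) I would first record the coalgebra decomposition $H=\bigoplus_{\overline g\in G(H)/G(B)} gB$: every component is some $B_g=gB$ because $H$ is pointed (so each component contains a grouplike), and $gB=g'B$ iff $g^{-1}g'\in G(B)$, so the components are indexed by the left cosets; each $gB=L_g(B)$ is coalgebra-isomorphic to $B$, and $gB=Bg$ since both equal $B_g$. The carrying principle applied to $m$ on the component $gB\otimes g'B$ gives $gB\cdot g'B\subseteq gg'B$, so $H$ is a $(G(H)/G(B))$-graded algebra with degree-$\overline 1$ component $B$, each homogeneous piece $gB$ containing the unit $g$ of $H$. This is exactly the data of a crossed product: fixing a transversal $\{t\}$ of $G(B)$ in $G(H)$ containing $1$, each element of $H$ is uniquely $\sum_t b_t\,t$ with $b_t\in B$, and for transversal elements $t,t'$ with $r$ the transversal element of $\overline{tt'}$ and $\sigma(\overline t,\overline{t'}):=tt'r^{-1}\in G(B)\subseteq B^{\times}$ one computes $bt\cdot b't' = b\,c_t(b')\,\sigma(\overline t,\overline{t'})\,r$, which is the crossed-product multiplication for the weak action $\overline t\rightharpoonup b':=c_t(b')$ and the $2$-cocycle $\sigma$ (normalized since $1$ is in the transversal, and satisfying the cocycle identity by associativity in $H$). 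Finally the coalgebra structure matches the tensor coalgebra $B\otimes \KK(G(H)/G(B))$, since $\Delta(bt)=\sum b_1t\otimes b_2t$; hence $H\cong B\#_\sigma \KK(G(H)/G(B))$ as Hopf algebras.

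The main obstacle is the foundational carrying principle — that a homomorphism of pointed coalgebras sends each link-indecomposable component into one link-indecomposable component — together with the identification of the components of a tensor product of coalgebras; these are the steps that genuinely use the combinatorics of grouplikes and skew-primitives (and, in a non-pointed setting, where hypotheses on $\KK$ would enter). By contrast, translating the grading-with-units into the precise crossed-product normalization and checking the cocycle identity is bookkeeping: it must be carried out carefully to match conventions, but it presents no conceptual difficulty.
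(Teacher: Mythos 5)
The paper offers no proof of this statement --- it is imported verbatim from Montgomery's paper \cite{Mon} --- so your argument can only be assessed on its own terms. Parts (2) and (3), the decomposition $H=\bigoplus_{\overline g}\,gB$ with $gB=Bg=B_g$, and the crossed-product bookkeeping in (4) are fine, because there you only ever apply your ``carrying principle'' to the coalgebra \emph{automorphisms} $L_g$, $R_g$, $c_g$ (and to the bijective antipode), and isomorphisms genuinely do permute link-indecomposable components. The gap is that the principle you rely on for the remaining, and hardest, step --- $m(B\otimes B)\subseteq B$, and more generally $gB\cdot g'B\subseteq gg'B$ --- is false for general homomorphisms of pointed coalgebras. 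Counterexample: let $C$ be the path coalgebra of $e_1\to e_2$ (basis $e_1,e_2,\alpha$ with $\alpha\in P(e_1,e_2)$ nontrivial) and $D=\KK e_1\oplus\KK e_2$ the grouplike coalgebra on two elements; the surjection $f$ fixing $e_1,e_2$ and sending $\alpha\mapsto e_1-e_2$ is a coalgebra map, since $(f\otimes f)\Delta(\alpha)=e_1\otimes(e_1-e_2)+(e_1-e_2)\otimes e_2=\Delta(e_1-e_2)$, yet it maps the link-indecomposable $C$ onto $D$, which meets two components and is contained in neither. The failure mode --- a nontrivial skew-primitive collapsing to a trivial one, severing a link --- is exactly what must be ruled out for $m$ restricted to $B\otimes B$, whose restriction to grouplikes, $(g,h)\mapsto gh$, is far from injective. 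So the assertion that $B$ is closed under multiplication, which is the real content of part (1) and of Montgomery's theorem, is not established by your argument.

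To close the gap you must prove $B_g\cdot B_h\subseteq B_{gh}$ directly. The standard route (Montgomery's) is an induction along the coradical filtration: since $\Delta$ is an algebra map one has $H_iH_j\subseteq H_{i+j}$ together with a compatibility of products with wedges, and the base of the induction is the explicit computation
$\Delta(xy)=gh\otimes xy+gy\otimes xh'+xh\otimes g'y+xy\otimes g'h'$ for $x\in P(g,g')$, $y\in P(h,h')$, which shows that every grouplike occurring in the expansion of $\Delta(xy)$ stays inside the single coset $gh\,G(B)$, so that $xy$ lies in the component $B_{gh}$. This is a genuine argument about how links interact with the product, not a formal consequence of $m$ being a coalgebra map, and it is the one piece your proposal is missing; the rest of your outline (translations, cosets, transversal, cocycle) is sound modulo that step.
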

 According to the theorem, $H\cong B\otimes_\KK \KK(G(H)/G(B))$ as a coalgebra.

\section{Separable extensions and representation finiteness}
In this section, we will discuss the main techniques of determining the (co)representation finiteness of a (co)algebra. It turns out that techniques using separable extensions of algebras serve as a fundamental theorem. So we start from recalling some results about algebra extensions from \cite{IS}.

\subsection{Separable extensions}
A covariant functor $F:\Cc\to \Dd$ induces a natural transformation between bifunctors $\bar{F}:\Cc(-,-)\to \Dd(F-,F-)$ sending $\alpha:X\to Y$ to $F(\alpha)$. If $\bar{F}$ admits a natural section, then $F$ is said to be separable. Let $\KK$ be a field and $A$, $B$ are finite dimension algebras over $\KK$. An algebra extension $A\subseteq B$ is called {\bf separable} if the restriction $ B\modd\to A\modd$ is a separable functor. For a separable extension $A\subseteq B$, $A$ is called a separable subalgebra of $B$. The following is a useful criteria of separable subalgebras.

\begin{lemma}\cite{CZ, K, IS}\label{central idempotent}
 A finite dimensional algebra $B$ is a separable extension of $A$ if and only if the multiplication map $u: B\otimes _A B\to B $ is a split epimorphism of $B$-bimodules, if and only if there is an element $e\in B\otimes_A B$ such that  the multiplication map $u:B\otimes_A B\to B$ satisfies $u(e)=1_A$ and $eb=be$ for all $b\in B$.
\end{lemma}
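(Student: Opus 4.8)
The plan is to prove the stated criterion for separable extensions — the equivalence of the three conditions in Lemma~\ref{central idempotent} — by a cyclic chain of implications, following the classical pattern for separable algebras but carried out relative to the subalgebra $A$. Throughout I work with finite dimensional $\KK$-algebras $A \subseteq B$, and I write $u : B \otimes_A B \to B$ for the multiplication map, which is a homomorphism of $B$-bimodules.

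\textbf{From the functorial definition to the split epimorphism.} First I would unwind what it means for the restriction functor $R : B\modd \to A\modd$ to be separable. The induced natural transformation $\bar R : \Hom_B(M,N) \to \Hom_A(M,N)$ admits a natural section; evaluating naturality on the universal situation and using that $B \otimes_A B$ represents the relevant functor, one extracts from the section a $B$-bimodule splitting of $u$. Concretely, the key observation is the adjunction-type isomorphism $\Hom_B(B \otimes_A M, N) \cong \Hom_A(M, N)$ for $M \in A\modd$, $N \in B\modd$: under this, the identity on $B \otimes_A M$ corresponds to the unit map, and the section of $\bar R$ produces a natural retraction of $M \to B \otimes_A M$; specializing $M = N = B$ (viewed appropriately) yields a $B$-bimodule map $\gamma : B \to B \otimes_A B$ with $u \circ \gamma = \mathrm{id}_B$. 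So separability of $R$ implies $u$ is a split $B$-bimodule epimorphism.

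\textbf{From the split epimorphism to the separability element.} Given a $B$-bimodule section $\gamma : B \to B \otimes_A B$ of $u$, set $e := \gamma(1_B) \in B \otimes_A B$. Then $u(e) = u(\gamma(1_B)) = 1_B$; and since $\gamma$ is a bimodule map, for every $b \in B$ we have $b \cdot e = b \cdot \gamma(1_B) = \gamma(b) = \gamma(1_B) \cdot b = e \cdot b$. (Here I note the minor wrinkle that the paper writes $u(e) = 1_A$; since $1_A = 1_B$ this is the same statement.) Conversely, given such an $e$ with $u(e)=1_B$ and $be = eb$ for all $b$, the map $b \mapsto be = eb$ is a well-defined $B$-bimodule map $B \to B\otimes_A B$ splitting $u$, so the two idempotent-element conditions are equivalent to the split-epimorphism condition. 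This part is essentially formal.

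\textbf{Closing the loop: the separability element gives a separable functor.} The remaining and slightly more substantive step is to show that existence of $e$ forces $R : B\modd \to A\modd$ to be separable, i.e. to construct the natural section of $\bar R$ explicitly. Write $e = \sum_i x_i \otimes_A y_i$. Given $A$-module maps $f : M \to N$ between $B$-modules, define $\tilde f : M \to N$ by $\tilde f(m) = \sum_i x_i \, f(y_i \, m)$; one checks, using $be = eb$, that $\tilde f$ is $B$-linear, using $u(e) = 1$ that $\tilde f = f$ when $f$ was already $B$-linear, and that $f \mapsto \tilde f$ is natural in $M$ and $N$. This furnishes the section of $\bar R$. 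The main obstacle — such as it is — is bookkeeping: verifying $B$-linearity of $\tilde f$ and naturality requires care in tracking how $e$ sits in the balanced tensor product $B \otimes_A B$ and using the commutation relation $be=eb$ at the right moments; everything else is routine. Finally, the concluding sentence of the surrounding discussion (that if $\pi : C \to D$ is a covering map then $\pi^* : D^* \to C^*$ is separable, hence $D$ corepresentation-finite implies $C$ corepresentation-finite) follows by applying this criterion to the extension $\pi^*$ together with the observation that a finitely many indecomposables downstairs, pulled through a separable restriction functor, bound the indecomposables upstairs.
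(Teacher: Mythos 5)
Your proof is correct. Note that the paper does not actually prove this lemma --- it is quoted from \cite{CZ, K, IS} without proof --- so there is no in-paper argument to compare against; what you have written is the standard proof from those sources (Rafael-type criterion for separability of the restriction functor, which is right adjoint to induction $B\otimes_A -$). All three of your steps check out: naturality of the section applied to the maps $b\mapsto 1\otimes b$, $u$, and the right multiplications yields the bimodule splitting and hence the element $e=\gamma(1_B)$ with $u(e)=1$ and $be=eb$; the equivalence of the splitting with the existence of $e$ is formal via $\gamma(b)=be=eb$; and the formula $\tilde f(m)=\sum_i x_i f(y_i m)$ is well defined on the $A$-balanced tensor precisely because $f$ is $A$-linear, is $B$-linear because $be=eb$, and restricts to the identity on $B$-linear maps because $u(e)=1$. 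Your remark that $1_A=1_B$ for a unital subalgebra correctly resolves the paper's typo $u(e)=1_A$.
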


The importance of separable extension is that it allows us to transfer representation finiteness from the subalgebra.

\begin{lemma}\cite[Corollary 1.8]{IS}\label{sep fin type}
If $A\subseteq B$ is a separable extension of finite dimensional algebras, then every indecomposable $B$-module is a direct summand of a module induced from an indecomposable $A$-module. In particular, if $A$ is  finite representation type, then so is $B$.
\end{lemma}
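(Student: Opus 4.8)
The plan is to derive everything from the idempotent characterisation of separability recorded in Lemma \ref{central idempotent}. Since $A\subseteq B$ is a separable extension, fix an element $e=\sum_i b_i\otimes b_i'\in B\otimes_A B$ with $u(e)=\sum_i b_ib_i'=1$ and $eb=be$ in the $B$-bimodule $B\otimes_A B$ for every $b\in B$. The key reduction is the following assertion: \emph{every finite dimensional left $B$-module $M$ is a direct summand, as a $B$-module, of the induced module $B\otimes_A M$} (here $B\otimes_A M$ is shorthand for $B\otimes_A\mathrm{res}_A M$).

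To prove this, I would, for each $m\in M$, use the left-$A$-linear map $r_m\colon B\to M$, $b\mapsto bm$, and define $\delta_M\colon M\to B\otimes_A M$ by $\delta_M(m)=(\mathrm{id}_B\otimes r_m)(e)=\sum_i b_i\otimes b_i'm$. This is $\KK$-linear and well defined. It is left $B$-linear because, under $\mathrm{id}_B\otimes r_m$, the right action $e\cdot b=\sum_i b_i\otimes b_i'b$ is sent to $\delta_M(bm)$ while the left action $b\cdot e=\sum_i bb_i\otimes b_i'$ is sent to $b\,\delta_M(m)$, and these agree by $eb=be$. Finally, writing $\varepsilon_M\colon B\otimes_A M\to M$ for the action map $b\otimes m\mapsto bm$, one has $\varepsilon_M\circ(\mathrm{id}_B\otimes r_m)=r_m\circ u$, so $\varepsilon_M(\delta_M(m))=r_m(u(e))=r_m(1)=m$; hence $\varepsilon_M\circ\delta_M=\mathrm{id}_M$ and $M$ is a direct summand of $B\otimes_A M$ in $B$--mod. (This is the concrete incarnation of the fact that separability of the restriction functor makes the counit of the induction--restriction adjunction split naturally.)

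Now let $M$ be an indecomposable $B$-module. Decomposing $\mathrm{res}_A M=\bigoplus_{j=1}^{r}N_j$ into indecomposable $A$-modules (legitimate since all modules in sight are finite dimensional, so Krull--Schmidt applies), we obtain $B\otimes_A M\cong\bigoplus_{j=1}^{r}(B\otimes_A N_j)$, of which $M$ is a direct summand by the previous paragraph. Since $M$ is indecomposable and all these $B$-modules are finite dimensional, Krull--Schmidt forces $M$ to be a direct summand of $B\otimes_A N_j$ for some $j$, which is the first assertion. For the ``in particular'' statement, suppose $A$ is of finite representation type and let $N_1,\dots,N_k$ represent the isomorphism classes of indecomposable $A$-modules. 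Each $B\otimes_A N_i$ is a finite dimensional $B$-module, hence a finite direct sum of indecomposables, and by the previous step every indecomposable $B$-module is isomorphic to one of the finitely many indecomposable summands occurring across $B\otimes_A N_1,\dots,B\otimes_A N_k$; thus $B$ has only finitely many indecomposables up to isomorphism. The argument is essentially formal once $e$ is in hand: the only step requiring genuine care is the $B$-linearity of $\delta_M$, which is precisely where the centrality of $e$ enters, together with the routine Krull--Schmidt passage from ``summand of a direct sum'' to ``summand of a single summand''.
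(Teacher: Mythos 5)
Your proof is correct. The paper does not actually prove this lemma but imports it as \cite[Corollary 1.8]{IS}; your argument --- splitting the counit $B\otimes_A M\to M$ of the induction--restriction adjunction by means of the separability element $e$ from Lemma \ref{central idempotent}, checking that $\delta_M$ is $B$-linear precisely because $eb=be$, and then passing to an indecomposable summand via Krull--Schmidt --- is the standard proof of that cited result, so it supplies exactly the missing details and is consistent with the tools the paper sets up.
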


We can apply the concept of separable extensions in the context of coalgebras using so-called covering maps which are introduced below.


\subsection{Covering maps} Let $C\subseteq \KK Q$ be a sub-coalgebra. We call an element $d\in C$ is a {\bf diamond}, if $d$ is a linear combination of paths with a common source, denoted by $s(d)$ and a common sink, denoted by $t(d)$.

\begin{proposition} \cite{JMR}
Let $C\subseteq \KK Q$ be a sub-coalgebra. Then
there exists a $\KK$-linear basis of $C$ such that each element in the basis is a diamond.
\end{proposition}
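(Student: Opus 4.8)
The plan is to show that a subcoalgebra of $\KK Q$ is automatically ``bigraded'' by pairs of vertices, after which any basis of each bigraded piece consists of diamonds. For vertices $i,j\in Q_0$ let $\KK Q_{i,j}\subseteq\KK Q$ be the span of all paths with source $i$ and sink $j$, and let $e_i\in\KK Q$ denote the trivial path at $i$; thus $\KK Q=\bigoplus_{i,j\in Q_0}\KK Q_{i,j}$ as a $\KK$-vector space, every nonzero element of $\KK Q_{i,j}$ is by definition a diamond with $s(d)=i$ and $t(d)=j$ (and $e_i$ is a diamond with $i=j$). It therefore suffices to prove
\[
C=\bigoplus_{i,j\in Q_0}\big(C\cap\KK Q_{i,j}\big),
\]
since then an arbitrary $\KK$-basis of each nonzero summand $C\cap\KK Q_{i,j}$, collected together, is a basis of $C$ by diamonds.

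To establish this, I would first record the $\KK Q_0$-bicomodule structure on $\KK Q$. Using the path-length grading, the projection $\pi_0\colon\KK Q\to\KK Q_0$ onto trivial paths is a coalgebra map, and the maps $\rho_L=(\pi_0\otimes\mathrm{id})\circ\Delta$ and $\rho_R=(\mathrm{id}\otimes\pi_0)\circ\Delta$ make $\KK Q$ a $\KK Q_0$-bicomodule. A one-line computation on a path $p$ gives $\rho_L(p)=e_{s(p)}\otimes p$ and $\rho_R(p)=p\otimes e_{t(p)}$, because the unique splitting $p=(q|r)$ with $q$ (resp.\ $r$) of length $0$ is $q=e_{s(p)},\,r=p$ (resp.\ $q=p,\,r=e_{t(p)}$). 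Hence, writing $c\in C$ as $c=\sum_{i,j}c_{i,j}$ with $c_{i,j}\in\KK Q_{i,j}$, we obtain $\rho_L(c)=\sum_i e_i\otimes\big(\sum_j c_{i,j}\big)$ and $\rho_R(c)=\sum_j\big(\sum_i c_{i,j}\big)\otimes e_j$.

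The crux of the argument is then the observation that $C$ is a sub-bicomodule: since $\Delta(c)\in C\otimes C$ and $\pi_0(\KK Q)=\KK Q_0$, we have $\rho_L(c)\in\KK Q_0\otimes C$ and $\rho_R(c)\in C\otimes\KK Q_0$. As $\{e_i\mid i\in Q_0\}$ is a basis of $\KK Q_0$, comparing coefficients in $\rho_L(c)\in\KK Q_0\otimes C=\bigoplus_i(e_i\otimes C)$ forces $\sum_j c_{i,j}\in C$ for each $i$, and likewise $\rho_R$ forces $\sum_i c_{i,j}\in C$ for each $j$. In other words, the ``row'' projections $p_i\colon\KK Q\to\bigoplus_j\KK Q_{i,j}$ and the ``column'' projections $q_j\colon\KK Q\to\bigoplus_i\KK Q_{i,j}$ all restrict to endomorphisms of $C$. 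These are pairwise commuting idempotents on $C$ whose (locally finite) sum is $\mathrm{id}_C$, so $C=\bigoplus_{i,j}p_iq_j(C)$ with $p_iq_j(C)=C\cap\KK Q_{i,j}$, as required; choosing a basis of each summand finishes the proof.

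I do not expect a genuine obstacle here: the only point needing a little care is the purely formal passage from the two separate decompositions of $C$ (one indexed by sources, one by sinks) to their common refinement indexed by pairs $(i,j)$, which is legitimate precisely because the row and column projections commute and preserve $C$, and which I would make explicit through the orthogonal idempotents $p_iq_j$. The possibly infinite-dimensional case requires no change, since every vector space has a basis and the direct sums above may be taken over arbitrary index sets.
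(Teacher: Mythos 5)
Your argument is correct and complete. Note that the paper does not actually prove this proposition --- it is quoted from [JMR] without proof --- so there is nothing internal to compare against; your proof is the standard one, namely that a subcoalgebra of $\KK Q$ is automatically a $\KK Q_0$-sub-bicomodule and hence decomposes into its $(e_i,e_j)$-isotypic components $C\cap \KK Q_{i,j}$. All the steps check out: $\rho_L(p)=e_{s(p)}\otimes p$ and $\rho_R(p)=p\otimes e_{t(p)}$ follow from the stated comultiplication, the containments $\rho_L(C)\subseteq \KK Q_0\otimes C$ and $\rho_R(C)\subseteq C\otimes \KK Q_0$ follow from $\Delta(C)\subseteq C\otimes C$ together with $\pi_0(\KK Q)=\KK Q_0$, and the passage from the row and column decompositions to their common refinement via the commuting idempotents $p_iq_j$ is legitimate and handles infinite quivers without change.
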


We call such a basis in the above proposition a {\bf diamond basis}.

\begin{definition}\label{covering def}
Let $C\subseteq \KK Q$ and $D\subseteq \KK Q'$ be finite dimensional pointed coalgebras with diamond basis $B$ and $B'$ respectively. Suppose $Q_0\cup Q_1\subseteq B$ and $Q'_0\cup Q'_1\subseteq B'$.  A coalgebra homomorphism $\pi: C\to D$ is called {\bf a covering map} if
\begin{enumerate}
\item $\pi(B)=B'$.
\item Let $b_1, b_2\in B$ satisfying $\pi(b_1)=\pi(b_2)$. Then $s(b_1)=s(b_2)$ or $t(b_1)=t(b_2)$ implies $b_1=b_2$.
\end{enumerate}

\end{definition}

\begin{remark}
(1) A covering map $\pi$ is surjective satisfing $\pi(Q_0)=Q'_0$, $\pi(Q_1)=Q'_1$.\\ 
(2) Condition (2) is equivalent to say that if $b_1$ and $b_2$ are distinct diamonds in $B$ with the same sources or targets, then $\pi(b_1)\neq\pi(b_2)$. Sometimes it is convenient to use this equivalent condition to check if a coalgebra homomorphism is a covering map. (See Example \ref{exm:covering})
\end{remark}

 Covering maps allows us to transfer representation finiteness between two coalgebras.
\begin{lemma} \label{covering fin type}
If $\pi:C\to D$ is a covering map, then $\pi^*: D^*\to C^*$ is a separable extension of algebras. Hence if  $D$ is corepresentation finite, then so is $C$.
\end{lemma}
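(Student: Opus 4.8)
The plan is to reduce the statement to Lemma \ref{central idempotent} by exhibiting an explicit separating idempotent for the extension $\pi^*\colon D^*\to C^*$. First I would unwind the duality: since $\pi\colon C\to D$ is a surjective coalgebra map, $\pi^*\colon D^*\to C^*$ is an injective algebra map, so we may regard $D^*$ as a subalgebra of $C^*$; concretely, a functional $f\in D^*$ is identified with $f\circ\pi\in C^*$. The key observation is that a covering map, by condition (2) of Definition \ref{covering def}, behaves on a diamond basis like a covering of quivers: the fibres $\pi^{-1}(b')\cap B$ for $b'\in B'$ are ``discrete'' in the sense that two basis elements in the same fibre can never share a source or a target. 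I would first prove the combinatorial heart of this: if $b_1,b_2\in B$ lie in the same fibre and there is a diamond $d\in B$ with $s(d)=s(b_1)$ and $t(d)=t(b_2)$ (or some concatenation linking them), then in fact $b_1=b_2$ — i.e. the fibres over vertices, together with condition (2), force the covering to be ``injective on stars.'' This is exactly what is needed to make the naive averaging idempotent well-defined.

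Next I would write down the candidate element $e\in C^*\otimes_{D^*}C^*$. Using the diamond basis $B$ of $C$ and the dual basis $\{b^*\}$ in $C^*$, and writing $Q_0=\{g\}$ for the vertices with $g\in B$, the natural guess is
\[
e=\sum_{g\in Q_0} g^*\otimes g^*,
\]
or more precisely an appropriately normalised sum over vertices in each fibre of $\pi$ restricted to $Q_0$; one should check whether a scalar $\frac{1}{|\text{fibre}|}$ is needed, but since we are over a field of characteristic zero (as in the paper's standing hypothesis) this causes no trouble. The two things to verify are: (i) $u(e)=\sum_g g^*=1_{C^*}$, which holds because $\{g^* : g\in Q_0\}$ is a complete set of orthogonal idempotents in $C^*$ summing to the identity (dual to $C_0=\KK G(C)$); and (ii) $e\phi=\phi e$ in $C^*\otimes_{D^*}C^*$ for every $\phi\in C^*$. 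Claim (ii) is where condition (2) of the covering map is used: for a diamond basis element $b\in B$ with source $g=s(b)$ and target $h=t(b)$, we have $b^* g^*$ and $h^* b^*$ supported as expected, and the relation $g^*\otimes g^*\cdot b^* = b^*\otimes h^*\otimes\cdots$ must be pushed across the tensor over $D^*$; here the fact that $\pi$ identifies $b$ with $\pi(b)$ but keeps the sources/targets ``unconfused'' lets the $D^*$-linearity move the correct idempotent across. I would verify (ii) on the basis $\{b^* : b\in B\}$ of $C^*$, reducing to a finite check that, over $D^*$, the element $g^*$ (for $g$ in the fibre of $\pi(g)$) acts the same way on both sides precisely because distinct basis elements sharing $\pi(g)$ as image don't meet any common diamond.

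Having established that $e$ is a separating idempotent, Lemma \ref{central idempotent} gives that $D^*\subseteq C^*$ is a separable extension. The second sentence of the lemma is then immediate from Lemma \ref{sep fin type}: if $D$ is corepresentation finite, then $D^*$ is a finite representation type finite dimensional algebra, hence so is $C^*$ by Lemma \ref{sep fin type}, and therefore $C$ is corepresentation finite (using the equivalence between $C$-comodules and $C^*$-modules in the finite dimensional case noted in the preliminaries).

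The main obstacle I anticipate is verifying the commutation relation $e\phi=\phi e$ cleanly: the tensor product is over $D^*$, not over $\KK$, so one cannot just compare coefficients naively — one must carefully exploit that $\pi^*(D^*)$-linearity lets us slide exactly the ``right'' vertex idempotents across the middle of the tensor, and this is the step that genuinely uses hypothesis (2) of Definition \ref{covering def} rather than mere surjectivity. Getting the bookkeeping of fibres, sources and targets right — in particular ruling out the ``bad'' coincidences where two basis elements in one fibre could be linked by a diamond — is the crux; everything else (the formula for $e$, the computation $u(e)=1$, and the final deduction) is routine.
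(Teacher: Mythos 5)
Your approach is the paper's: the separability element is exactly $e=\sum_{g\in Q_0}g^*\otimes_{D^*}g^*$, and the proof reduces to Lemma \ref{central idempotent} together with Lemma \ref{sep fin type} just as you describe (no normalisation is needed, since $u(e)=\sum_g (g^*)^2=\sum_g g^*=1_{C^*}$ on the nose, and nothing about the characteristic of $\KK$ enters). The one step you leave open --- the commutation $b^*e=eb^*$ --- is resolved not by ``moving the correct idempotent across'' the tensor, but by moving the element $\pi^*(\pi(b)^*)=\sum_{\pi(b_i)=\pi(b)}b_i^*$, the sum of the duals of all diamonds in the fibre of $\pi(b)$, which lies in the subalgebra $\pi^*(D^*)$ because $\pi^*$ is an (injective) algebra map. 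Condition (2) of Definition \ref{covering def} says precisely that these $b_i$ have pairwise distinct sources and pairwise distinct targets, so multiplying by the source (resp.\ target) idempotent of $b$ kills every term except $b^*$ itself: $s(b)^*\cdot\pi^*(\pi(b)^*)=b^*=\pi^*(\pi(b)^*)\cdot t(b)^*$ (with the idempotents placed on the sides dictated by the convolution product on $C^*$). Hence $b^*e=b^*\otimes_{D^*}t(b)^*=s(b)^*\,\pi^*(\pi(b)^*)\otimes_{D^*}t(b)^*=s(b)^*\otimes_{D^*}\pi^*(\pi(b)^*)\,t(b)^*=s(b)^*\otimes_{D^*}b^*=eb^*$. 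In particular your proposed auxiliary ``injective on stars'' lemma about diamonds linking two elements of a fibre is not needed; condition (2) is used verbatim and nothing more. The final deduction via Lemma \ref{sep fin type} is as you state.
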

\begin{proof}
We prove the lemma using Lemma \ref{central idempotent}.
   Suppose $e^*$'s are primitive idempotents of $D^*$. Then $\sum e^*\otimes_{D^*} e^*$ is an element in $C^*\otimes_{D^*}C^*$ such that under the multiplication $u(\sum e^*\otimes_{D^*} e^*)=1$. It suffice to show that for any diamond $x$ in the diamond basis of $C$, $x^*$ commutes with $\sum e^*\otimes_{D^*} e^*$.

Since $\pi$ is a covering map, $\pi^*(\pi(x)^*)=\sum\limits_{\pi(x_i)=\pi(x)}x_i^*$, where the diamonds $x_i$ have mutually different sources and targets. Notice that $\sum\limits_{\pi(x_i)=\pi(x)}x_i^*\in D^*$ since $\pi^*$ is a monomorphism.


 Hence $x^*\sum e^*\otimes_{D^*} e^*=\sum x^*e^*\otimes_{D^*} e^*= t(x^*)x^*\otimes_{D^*} s(x^*)= t(x^*)\pi^*(\pi(x)^*) \otimes_{D^*} s(x^*)= t(x^*) \otimes_{D^*} \pi^*(\pi(x)^*)s(x^*)= t(x^*) \otimes_{D^*} x^*=\sum e^*\otimes_{D^*} e^*x^*$.

  Therefore, by Lemma  \ref{central idempotent}, $\pi^*: D^*\to C^*$ is a separable extension. The second statement then follows immediately from Lemma \ref{sep fin type}.
\end{proof}

\begin{example}\label{exm:covering}
Let $C$ be the path coalgebra $\KK \tilde Q$ of the following quiver $\tilde Q$ and $D$ a subcoalgebra of the path coalgebra $\KK Q$ of the following quiver $Q$, which is linearly generated by paths $Q_0\cup Q_1\cup \{(\alpha|\beta), (\beta|\gamma)\}$

\begin{center}
\begin{tikzpicture}[->]
\node(1) at (0,0) {$1$};
\node(2) at (0,1) {$2$};
\node(4) at (1,1) {$4$};
\node(3) at (1,0) {$3$};
\node (c) at (-1,0.5) {$\tilde Q$:};
\draw(1)--node[left]{$\tilde\beta$}(2);
\draw(2)--node[above]{$\tilde\gamma$}(4);
\draw(3)--node[right]{$\tilde\delta$}(4);
\draw(1)--node[below]{$\tilde\alpha$}(3);

 \node(5) at (3,0) {$1$};
\node(6) at (3,1) {$2$};

\node (d) at (2.2,0.5) {$Q$:};
\draw(5)--node[left]{$\beta$}(6);
 \draw [->] (5.east)arc(150:-150:0.15)node[right]{\ \ $\alpha$};
\draw [->] (6.east)arc(150:-150:0.15)node[right]{\ \ $\gamma$};
\end{tikzpicture}
\end{center}

It is obvious that $C$ has a diamond basis consisting of all the paths in $\tilde Q$ and $D$ has a diamond basis $Q_0\cup Q_1\cup \{(\alpha|\beta), (\beta|\gamma)\}$. Consider the coalgebra homomorphism $\pi: C\to D$ defined on basis by
$$
\pi(\tilde e_1)=\pi(\tilde e_3)=e_1, \pi(\tilde e_2)=\pi(\tilde e_4)=e_2;
$$
$$
\pi(\tilde \alpha)=\alpha, \pi(\tilde \beta)=\pi(\tilde \delta)=\beta, \pi(\tilde \gamma)=\gamma;
$$
$$
\pi((\tilde\alpha|\tilde\delta))=(\alpha|\beta); \pi((\tilde\beta|\tilde\gamma))=(\beta|\gamma).
$$
It is easy to see that $\pi$ is a covering map. In fact, the only pairs of diamonds in the diamond basis of $C$ which have the same sources are $\tilde\alpha$ and $\tilde\beta$; $(\tilde\alpha|\tilde\delta)$ and $(\tilde\beta|\tilde\gamma)$. But neither of them are mapped to the same diamond in $D$ via the map $\pi$. Similarly, one can check for pairs of diamonds with the same targets.

Since $C$ is infinite representation type, by Lemma \ref{covering fin type}, so is $D$.
\end{example}

We also want to introduce a special class of covering maps which are constructed from quotient quivers. Let $Q=(Q_0,Q_1)$ and $Q'=(Q'_0,Q'_1)$ be quivers. A {\bf quiver homomorphism} $q: Q\to Q'$ is a map $q(Q_0)\subseteq Q'_0$ and $q(Q_1)\subseteq Q'_1$ such that for each $\alpha\in Q_1$, $s(q(\alpha))=q(s(\alpha))$ and $t(q(\alpha))=q(t(\alpha))$.

A quotient quiver $\bar Q=(\bar Q_0, \bar Q_1)$ of $Q=(Q_0,Q_1)$ is a quiver, whose vertices are blocks of partitions of $Q_0$ and the number of arrows from $\bar u$ to $\bar v$ in $\bar Q$ equals the total number of arrows from $u$ to $v$ for all $u\in \bar u$ and $v\in \bar v$. A quotient quiver $\bar Q$ can be obtained by gluing vertices in the same block of $Q$ and preserve all the arrows at the same time. It is easy to see that there is a canonical homomorphism $q: Q\to\bar Q$, of quotient quivers, sending vertices $v\mapsto \bar v$ and arrows $(u\stackrel{\alpha}\to v) \mapsto (\bar u\stackrel{\bar\alpha}\to \bar v)$. It is easy to see that $q$ sends any path $(\alpha_1|\cdots|\alpha_n)$ in $Q$ to the path $(\bar\alpha_1|\cdots|\bar\alpha_n)$ in $\bar Q$. Hence  $q$ can be extended to a linear map $q:\KK Q\to \KK \bar Q$.

For any subcoalgebra $C$ of $\KK Q$, define $q(C)$ the smallest subcoalgebra of $\KK \bar Q$ containing $q(c)$ for all $c\in C$. Notice that $q(Q_0)=\bar Q_0$ and there is a one-to-one correspondence between arrows $\alpha$ in $Q$ and arrows $\bar \alpha$ in $\bar Q$. For any non-trivial path $\bar p$ in $\bar Q$, there is a unique path $p$ in $Q$, such that $q(p)=\bar p$. Hence $q:C\to q(C)=\{q(c)|c\in C\}$ is actually a coalgebra homomorphism.


\begin{lemma}\label{quotient quiver}
Let $q:Q\to \bar Q$ be a homomorphism of quotient quivers. For any subcoalgebra $C\subseteq \KK Q$, (1) For any diamond basis $B$ of $C$, $\{q(b)|b\in B\}$ is a diamond basis of $q(C)$ (2) $q:C\to q(C)$ is a covering map.
\end{lemma}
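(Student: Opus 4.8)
The plan is to verify the two claims directly from the definitions, using the explicit structure of the map $q:\KK Q\to\KK\bar Q$ on paths. For (1), I would start from the observation (noted just before the lemma) that $q$ induces a bijection between arrows of $Q$ and arrows of $\bar Q$, and more generally that for every nontrivial path $\bar p$ in $\bar Q$ there is a \emph{unique} path $p$ in $Q$ with $q(p)=\bar p$. First I would record that $q$ sends a diamond to a diamond: if $b=\sum_i \lambda_i p_i$ is a diamond in $C$, all $p_i$ share a source $u$ and a sink $v$, so all $q(p_i)=\bar p_i$ share source $\bar u$ and sink $\bar v$, hence $q(b)$ is a linear combination of paths with common source and sink, i.e.\ a diamond in $\KK\bar Q$ (with $s(q(b))=\overline{s(b)}$, $t(q(b))=\overline{t(b)}$). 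Next, since $q(C)$ is spanned by $\{q(c):c\in C\}$ and $B$ spans $C$, the set $\{q(b):b\in B\}$ spans $q(C)$; I then need linear independence. Here I would use the uniqueness of path lifts: group the basis $B$ by the pair $(s(b),t(b))$; within a fixed block, the paths appearing in the various $b\in B$ lie in the finite set of paths from $s(b)$ to $t(b)$, and $q$ restricted to paths with a fixed source is injective onto paths with the corresponding source (by the unique-lift property), so a nontrivial linear relation among the $q(b)$ in one block would pull back to a nontrivial relation among the $b$ in that block, contradicting that $B$ is a basis. Relations mixing blocks with distinct sources or targets cannot occur because the resulting $q(b)$ live in different source/target components of $\KK\bar Q$. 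This gives that $\{q(b):b\in B\}$ is a basis of $q(C)$ consisting of diamonds, proving (1).

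For (2), I must check the two conditions in Definition \ref{covering def} for $\pi:=q|_C:C\to q(C)$. That $q$ is a coalgebra homomorphism was already established in the paragraph preceding the lemma, and by (1) it carries the diamond basis $B$ of $C$ onto a diamond basis of $q(C)$, so $\pi(B)=B'$ with $B':=\{q(b):b\in B\}$; also $\pi(Q_0)=\bar Q_0$ and $\pi(Q_1)=\bar Q_1$ by construction of the quotient quiver, so the ambient hypotheses $Q_0\cup Q_1\subseteq B$, $\bar Q_0\cup\bar Q_1\subseteq B'$ are respected. For condition (2): suppose $b_1,b_2\in B$ with $\pi(b_1)=\pi(b_2)$ and, say, $s(b_1)=s(b_2)=u$ (the target case is symmetric). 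Then $b_1$ and $b_2$ are both linear combinations of paths out of $u$, and $q$ is injective on the span of paths out of $u$ — precisely because each path $\bar p$ out of $\bar u$ has a \emph{unique} lift, and that lift is forced to start at $u$ once we know it starts at some vertex over $\bar u$; here one uses that distinct vertices over $\bar u$ are distinguished only after gluing, but a path is determined by its sequence of arrows and each arrow has a unique lift. Hence $q(b_1)=q(b_2)$ forces $b_1=b_2$, which is exactly condition (2). This completes the proof.

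The main obstacle, and the only place requiring genuine care, is the injectivity statement: \emph{$q$ restricted to the subspace of $\KK Q$ spanned by paths with a fixed source (or fixed target) is injective}. This is where the ``unique lift of paths'' property is really used, and one should phrase it cleanly — e.g.\ observe that the map $\alpha\mapsto\bar\alpha$ on arrows is a bijection, so a path $(\alpha_1|\cdots|\alpha_n)$ in $Q$ is recovered from $(\bar\alpha_1|\cdots|\bar\alpha_n)$ by applying the inverse arrow-bijection componentwise, hence $q$ is injective on \emph{all} of $\KK Q$ as soon as one keeps track of which paths are composable; the source (or target) constraint in condition (2) is what guarantees the diamonds $b_1,b_2$ live in a subspace on which this recovery is unambiguous. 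Once that injectivity is in hand, both (1) and (2) are short. I would also remark that the ``$s(b_1)=s(b_2)$ \emph{or} $t(b_1)=t(b_2)$'' in the definition is handled by a single symmetric argument, so no extra work is needed for the target case.
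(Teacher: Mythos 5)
Your argument for part (2) is correct and is in substance the paper's own proof: the paper runs an explicit case split on whether a trivial path occurs in $b_1$, but the underlying facts are exactly the ones you isolate, namely that $\alpha\mapsto\bar\alpha$ is a bijection on arrows, hence $q$ is injective on the set of nontrivial paths, and the only failure of injectivity of $q$ on $\KK Q$ is the identification of vertices within a fiber --- which the hypothesis $s(b_1)=s(b_2)$ (or $t(b_1)=t(b_2)$) neutralizes, since a diamond with source $u$ can involve at most the single trivial path $u$. Your packaging, ``$q$ is injective on the span of paths with a fixed source,'' is a clean way to say this; the one point to make explicit is that injectivity of the linear map on that span follows because the images of those paths are pairwise distinct paths of $\bar Q$, hence linearly independent (injectivity on a basis alone would not suffice).

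The linear-independence half of your proof of (1), however, contains a step that fails. You dispose of relations mixing blocks with distinct $(s(b),t(b))$ on the grounds that ``the resulting $q(b)$ live in different source/target components of $\KK\bar Q$.'' That is precisely what passing to a quotient quiver destroys: two blocks with sources $u_1\ne u_2$ and targets $v_1\ne v_2$ may satisfy $\bar u_1=\bar u_2$ and $\bar v_1=\bar v_2$, so their images land in the \emph{same} component --- already two vertices $e\ne e'$ in one fiber give $q(e)=q(e')=\bar e$. The conclusion survives only because one takes the \emph{set} $\{q(b)\mid b\in B\}$: the kernel of $q$ on $\KK Q$ is spanned by the differences $e-e'$ of vertices in a common fiber, so any dependence $\sum\lambda_i q(b_i)=0$ forces $\sum\lambda_i b_i$ into that kernel; splitting off the nontrivial-path parts (where $q$ is injective) reduces one to vertex terms whose images literally coincide as elements of the set, rather than becoming nontrivially dependent. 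The paper sidesteps all of this by declaring (1) ``clear by definition,'' but if you argue it, the cross-block case is the one that needs the kernel description --- a separation-of-components argument cannot work here.
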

\begin{proof}
(1) is clear by definition.  For (2),  Let $b_1=\sum c_ip_i$, $b_2=c'_ip'_i\in B$ be diamonds satisfying  $ {q}(b_1)= {q}(b_2)$, where $p_i$, $p'_i$ are paths. First assume all $p_i$'s are paths of length at least $1$. Since $q$ preserves the length of paths, it follows that $p'_i$'s are also paths of lengths at least $1$. Then it forces $b_1=b_2$, because $q$ is a bijection between the set of non-trivial paths on $Q$ and $\bar Q$. Now  assume $p_1=e$ is a vertex and $p_i$ are paths of length at least $1$ for $i>1$. Since $b_1$ is a diamond, $s(b_1)=e=t(b_1)$. Because $ {q}(b_1)= {q}(b_2)$, $b_2$ contains a term $e'$ such that  $q(e)=q(e')$ and hence $s(b_2)=e'=t(b_2)$. So we can write $b_2=c'_1e'+\sum_{i>1}c'_ip'_i$. If in additional $s(b_1)=s(b_2)$ or $t(b_1)=t(b_2)$, then $e=e'$ and $b_1=b_2$. Therefore ${q}$ is a covering map.
\end{proof}

\begin{corollary}\label{cor quotient quiver}
Let $C$ be a pointed coalgebra with $\Ext$-quiver $Q$ of $\Mm^C$. Suppose there is a homomorphism of quotient quivers $q:Q\to\bar Q$.   If $q(C)$ has finite corepresentation type, then so does $C$.
  \end{corollary}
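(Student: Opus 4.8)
The plan is to deduce Corollary \ref{cor quotient quiver} directly from Lemma \ref{quotient quiver} together with Lemma \ref{covering fin type}, reducing the infinite-dimensional situation to the finite-dimensional one by working inside the coradical filtration. First I would recall that by Proposition \ref{discrete type dual} and the preceding remarks, $C$ has discrete corepresentation type if and only if every finite dimensional subcoalgebra of $C$ has finite corepresentation type; equivalently (since any finite dimensional subcoalgebra is contained in some term $C_n$ of the coradical filtration, and each $C_n$ is itself a finite dimensional subcoalgebra when $C$ is, say, such that each $C_n$ is finite dimensional — or more carefully, since every finite dimensional subcoalgebra is contained in $\KK Q'$ for a finite subquiver $Q'$) it suffices to check finite corepresentation type on an exhausting family of finite dimensional subcoalgebras. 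So the statement to prove is: for every finite dimensional subcoalgebra $C' \subseteq C \subseteq \KK Q$, $C'$ has finite corepresentation type.

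The key steps, in order, are: (i) Fix a finite dimensional subcoalgebra $C' \subseteq C$. By the diamond basis proposition, $C'$ has a diamond basis $B'$, and after enlarging $C'$ slightly (replacing it by the subcoalgebra generated by $B' \cup Q_0' \cup Q_1'$ for the relevant finite subquiver, which keeps it finite dimensional) we may assume $Q_0' \cup Q_1' \subseteq B'$, so that Definition \ref{covering def} applies. (ii) Apply the quiver homomorphism $q$ to $C'$: by Lemma \ref{quotient quiver}(1), $\{q(b) : b \in B'\}$ is a diamond basis of $q(C')$, and $q(C')$ is a finite dimensional subcoalgebra of $q(C)$; moreover $q(C') \subseteq q(C)$, so $q(C')$ has finite corepresentation type by hypothesis on $q(C)$ (again via Proposition \ref{discrete type dual}, or directly since a subcoalgebra of a finite corepresentation type coalgebra inherits the property — a subcoalgebra embedding induces an exact fully faithful embedding of comodule categories). (iii) By Lemma \ref{quotient quiver}(2), $q|_{C'} : C' \to q(C')$ is a covering map; hence by Lemma \ref{covering fin type}, since $q(C')$ is corepresentation finite, so is $C'$. (iv) Since $C'$ was an arbitrary finite dimensional subcoalgebra of $C$, Proposition \ref{discrete type dual} gives that $C$ has discrete corepresentation type.

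I expect the main obstacle to be a bookkeeping point rather than a conceptual one: making sure that an arbitrary finite dimensional subcoalgebra $C'$ can be replaced by one to which the covering-map machinery literally applies, i.e. one whose diamond basis contains the vertices and arrows of the ambient quiver it needs. This requires observing that $q$ restricts compatibly: if $C'$ lives in $\KK Q'$ for a finite subquiver $Q' \subseteq Q$, then $q$ maps $Q'$ to the quotient quiver $\bar Q'$ which is a subquiver of $\bar Q$, and $q(C') \subseteq \KK \bar Q' \subseteq q(C)$. One must also be slightly careful that "$q(C)$ has finite corepresentation type" is being used in the sense that all its finite dimensional subcoalgebras do; if instead $q(C)$ is itself finite dimensional the argument is even simpler. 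A secondary, minor point is confirming that a subcoalgebra inclusion $D \hookrightarrow E$ transfers finite corepresentation type downward to $D$, which follows because $\Mm^D$ is a full exact abelian subcategory of $\Mm^E$ (indecomposables stay indecomposable, so $D$ cannot have more indecomposables than $E$); this is implicit in Lemma \ref{non discrete type} and the discussion around it. Once these identifications are in place, the corollary is an immediate three-line consequence of the two preceding lemmas.
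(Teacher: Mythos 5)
Your proof is correct and follows exactly the route the paper intends: the corollary is stated without proof precisely because it is the immediate combination of Lemma \ref{quotient quiver} (the induced map $q:C\to q(C)$ is a covering map) with Lemma \ref{covering fin type} (covering maps pull back corepresentation finiteness), which is what you do. The additional reduction to finite dimensional subcoalgebras via Proposition \ref{discrete type dual} is careful bookkeeping the paper leaves implicit, and it is harmless since in every application the coalgebras involved are finite dimensional.
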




\begin{lemma}\label{lem:quotient quiver}
Let $C$ be a pointed coalgebra. Suppose the $\Ext$-quiver of $\Mm^C$ contains a subquiver $Q'$ such that there is a homomorphism of quotient quivers $q:Q\to Q'$ for some finite bipartite non-Dynkin type quiver $Q$, then $C$ is not discrete corepresentation type.
  \end{lemma}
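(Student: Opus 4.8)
The plan is to deduce Lemma~\ref{lem:quotient quiver} from the combination of Lemma~\ref{non discrete type} and the covering-map machinery developed in Lemmas~\ref{covering fin type}, \ref{quotient quiver} and Corollary~\ref{cor quotient quiver}. The point is that we do \emph{not} want to apply Corollary~\ref{cor quotient quiver} directly to $C$, since that corollary only transfers finite corepresentation type \emph{upward} (from $q(C)$ to $C$), whereas here we want to transfer infinite type \emph{downward} onto $C$. Instead, the idea is to pass to a suitable finite-dimensional subcoalgebra of $C$ lying over the bipartite non-Dynkin quiver $Q$, show it is of infinite corepresentation type by a covering argument, and then invoke Lemma~\ref{non discrete type}.

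First I would let $Q'' \subseteq Q'$ be the image of the quotient-quiver homomorphism $q \colon Q \to Q'$; since $Q$ is bipartite, so is $Q''$, and since $q$ restricted appropriately is a quotient-quiver map onto $Q''$ which is still non-Dynkin (a quotient of a non-Dynkin bipartite quiver that surjects still supports the same infinite-type phenomenon — actually the cleanest route is to keep $Q$ itself and just note $q(Q)\subseteq Q'$). Concretely: because $Q'$ is a subquiver of the $\Ext$-quiver of $\Mm^C$, Definition~\ref{Ext-quiver} gives us, for each arrow $\alpha\colon g\to h$ of $Q'$, a nontrivial skew-primitive $x_\alpha \in P(g,h)\cap C$. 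Let $D' \subseteq C$ be the subcoalgebra generated by the grouplikes indexing $Q'_0$ together with these $x_\alpha$; then $D'$ is finite-dimensional, pointed, its $\Ext$-quiver is exactly $Q'$ (it is Schurian since $Q$ — hence its quotient $Q'$ — may be taken with single arrows, or one trims to a single arrow per pair), and $D' = \KK Q'_1 = \KK Q'$ because $Q'$ is bipartite, so $D'$ has no paths of length $\ge 2$.

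Next I would show $D' = \KK Q'$ is of infinite corepresentation type. Here is where the quotient-quiver homomorphism $q\colon Q\to Q'$ enters: by Lemma~\ref{quotient quiver}, applied to $C:=\KK Q$ (with its path diamond basis), the induced map $q\colon \KK Q \to q(\KK Q)$ is a covering map, and since $Q$ is bipartite we likewise have $q(\KK Q) = \KK Q'$ (the image quotient quiver, which we may identify with the relevant part of $Q'$). Thus $q\colon \KK Q \to \KK Q'$ is a covering map. Now $Q$ is a finite bipartite non-Dynkin quiver, so $\KK[Q^{op}] \cong \KK Q^*$ is a finite-dimensional hereditary algebra of infinite representation type (Gabriel's theorem), equivalently $\KK Q$ is of infinite corepresentation type. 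We then argue by contraposition with Lemma~\ref{covering fin type}: if $\KK Q'$ were of \emph{finite} corepresentation type, then the covering map $q\colon \KK Q\to \KK Q'$ would force $\KK Q$ to be of finite corepresentation type, a contradiction. Hence $\KK Q'$ is of infinite corepresentation type.

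Finally I would assemble the pieces: we have produced a finite subquiver $Q' \subseteq Q_{\Ext}$ (the $\Ext$-quiver of $\Mm^C$) such that the coalgebra $\KK Q'_1 = \KK Q'$ in the coradical filtration is of infinite corepresentation type. Lemma~\ref{non discrete type} then yields immediately that $C$ is not of discrete corepresentation type, which is the assertion. The only genuinely delicate point is bookkeeping around the image of $q$ and making sure the covering-map and bipartiteness identifications ($q(\KK Q)=\KK Q'$, $\KK Q'_1 = \KK Q'$) are set up consistently; once $Q$ is bipartite these are all routine, which is why the hypothesis that $Q$ be bipartite (not merely non-Dynkin) is essential — it is exactly what makes $\KK Q_1 = \KK Q$ so that Gabriel's theorem on the path algebra applies verbatim. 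I expect the hardest part to be this last consistency check rather than any substantive new argument, since the real content is entirely packaged in the lemmas already proved.
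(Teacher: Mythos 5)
Your proposal is correct and follows essentially the same route as the paper: the quotient-quiver homomorphism gives a covering map $q:\KK Q=\KK Q_1\to \KK Q'_1$, the contrapositive of Lemma \ref{covering fin type} (via Corollary \ref{cor quotient quiver}) transfers infinite corepresentation type from $\KK Q$ to $\KK Q'_1$, and Lemma \ref{non discrete type} finishes. One small correction: a quotient of a bipartite quiver need not be bipartite (Example \ref{no 3 arrows} glues sources to sinks and even creates loops), so your claims that $Q'$ is bipartite and that $\KK Q'=\KK Q'_1$ are false in general; this is harmless, though, because the object your argument actually uses is $\KK Q'_1=q(\KK Q)$, which is always a finite-dimensional subcoalgebra of $C_1$, and the main clause of Lemma \ref{non discrete type} applies to it directly.
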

\begin{proof}
The quiver homomorphism induces a covering map: $q:\KK Q=\KK Q_1\to q(\KK Q_1)=\KK Q'_1$. By Corollary \ref{cor quotient quiver}, $\KK Q'_1$ is infinite corepresentation type. At the same time, $\KK Q'_1\subseteq C_1\subseteq C$ is a finite dimensional  subcoalgebra of $C$. Therefore $C$ is not discrete corepresentation type.
\end{proof}

\begin{example}\label{no 3 arrows}
A coalgebra $C$ with the following $\Ext$-quiver of $\Mm^C$ has infinite corepresentation type.
\begin{center}
\begin{tikzpicture}[->]
\node(1) at (0,0) {$1$};
\node(2) at (0,1) {$2$};
\node(4) at (1,1) {$4$};
\node(3) at (1,0) {$3$};
\node (c) at (-1,0.5) {$Q'$:};
\draw(1)--(2);
\draw(2)--(4);
\draw(3)--(4);
\draw(1)--(3);

\draw [->] (2.west)arc(330:30:0.15);
\draw [->] (3.east)arc(150:-150:0.15);
\end{tikzpicture}
\end{center}

Notice that $Q'$ is a quotient quiver of  the following bipartite quiver (by gluing vertices $2$ with $2'$ and $3$ with $3'$).
\begin{center}
\begin{tikzpicture}[->]
\node(1) at (-.5, 0.8) {$1$};
\node(2) at (0.5,0.8) {$2$};
\node(3) at (1,0) {$2'$};
\node(4) at (0.5,-0.8) {$4$};
\node(5) at (-0.5,-0.8) {$3'$};
\node(6) at (-1,0) {$3$};
 \node at (-1.5,0) {$Q$:};

 \draw[->] (1)--(2);
  \draw[->] (3)--(2);
  \draw[->] (3)--(4);
   \draw[->] (5)--(4);
    \draw[->] (5)--(6);
    \draw[->] (1)--(6);
\end{tikzpicture}
\end{center}

Hence $C$ is infinite corepresentation type by Lemma \ref{lem:quotient quiver}.
\end{example}

\subsection{Localization}

Let $A$ be a finite dimensional algebra and $e=e^2\in A$ be an idempotent. The localization is a  functor $L: A\modd\to eAe\modd$ sending $M$ to $Me=M\otimes_A Ae$. It has a quasi-inverse functor $R=\Hom_{eAe}(Ae,-)$. Hence we have the following well-known fact:

\begin{proposition}\label{localization}
If $A$ is representation finite, then so is the localization $eAe$.
\end{proposition}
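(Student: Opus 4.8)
The statement to prove is Proposition~\ref{localization}: if $A$ is representation finite, then so is the localization $eAe$.

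\medskip

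The plan is to exhibit every indecomposable $eAe$-module as a direct summand of $Ne := L(N)$ for some indecomposable $A$-module $N$, which is enough since $A$ has only finitely many such $N$ up to isomorphism and each $Ne$ decomposes into finitely many indecomposables. First I would recall the adjunction: the localization functor $L = -\otimes_A Ae : A\modd \to eAe\modd$ has the quasi-inverse $R = \Hom_{eAe}(Ae,-)$ on the level of the subcategory it identifies, and more usefully, for the pair $(L, R)$ one has the counit $L R \Rightarrow \mathrm{id}$, which is a natural isomorphism because $Ae$ is a projective generator of $eAe\modd$ (indeed $(Ae)e = eAe$ and $Ae$ is projective as a right $eAe$-module, being a summand of $A$; this gives $\Hom_{eAe}(Ae, W)\otimes_A Ae \cong W$ naturally in $W$). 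So every $eAe$-module $W$ is of the form $Me$ with $M = \Hom_{eAe}(Ae,W)$ an $A$-module.

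\medskip

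With that in hand, the key steps are: (i) take an indecomposable $eAe$-module $W$ and write $W \cong Me$ for the $A$-module $M = R(W)$; (ii) decompose $M = \bigoplus_{i=1}^r M_i$ into indecomposable $A$-modules — possible since $A$ is representation finite, in particular has finite Loewy/composition length so $M$ is finite dimensional and Krull--Schmidt applies; (iii) apply $L$, which is exact and additive, to get $W \cong Me = \bigoplus_{i=1}^r M_i e$; (iv) since $W$ is indecomposable, one summand $M_i e$ is isomorphic to $W$ and the rest vanish, so $W$ is (isomorphic to) $L(M_i)$ for an indecomposable $A$-module $M_i$. Finally, since there are finitely many indecomposable $A$-modules $N_1,\dots,N_m$ up to isomorphism, every indecomposable $eAe$-module occurs among the finitely many indecomposable summands of $N_1 e, \dots, N_m e$; hence $eAe$ is representation finite.

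\medskip

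I do not expect a serious obstacle here — this is the standard argument that localization (or more generally the functor $-\otimes_A Ae$ for a ``recollement''-type idempotent) preserves representation finiteness. The only point requiring a little care is justifying that the counit of the adjunction is an isomorphism, i.e. that $L$ is essentially surjective, which comes down to $Ae$ being a projective generator of $\mathrm{mod}\text{-}eAe$ on the appropriate side; alternatively one can avoid this entirely by noting directly that for any $eAe$-module $W$, the module $N := R(W) = \Hom_{eAe}(Ae, W)$ satisfies $Ne = W$ and then running steps (ii)--(iv). One should also note $M$ is finite dimensional so that Krull--Schmidt is available, which is immediate as $A$ is finite dimensional and $W$ (being a module over the finite-dimensional algebra $eAe$ appearing in a representation-finite context) is finite dimensional, whence $R(W)$ is too.
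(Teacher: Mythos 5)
Your argument is correct and is essentially the route the paper takes: the paper simply records that $L=-\otimes_A Ae$ admits $R=\Hom_{eAe}(Ae,-)$ with $LR\cong \mathrm{id}$ and cites the statement as well known, while you supply the standard details (essential surjectivity of $L$, then additivity plus indecomposability of $W$ to conclude every indecomposable $eAe$-module is $Ne$ for an indecomposable $A$-module $N$). One small slip in a side remark: $(Ae)e=Ae$, not $eAe$; the correct observation is that $eAe=e(Ae)$ is a direct summand of $Ae$ as a right $eAe$-module, or one can simply check directly that the counit $\Hom_{eAe}(Ae,W)\otimes_A Ae\to W$, $f\otimes ae\mapsto f(ae)$, is bijective.
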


\begin{example}\label{localization exm}
Let $Q$ be the following quiver and $I$ denote the $\lambda$-commutative relations of two squares $yx+\lambda vu=0$ for some $\lambda\in\KK^\times$. Then $\KK[Q]/I$ is infinite representation type.
$$\begin{tikzpicture}[->]
\node(0) at (-1,3) {$a$};
\node(1) at (-1,2) {$\cdot$};
\node(2) at (0,2) {$\cdot$};

\node(4) at (0,3) {$\cdot$};
\node(5) at (0,1) {$\cdot$};
\node(6) at (1,1) {$\cdot$};
\node(66) at (1,0) {$\cdot$};

\node(7) at (1,0) {$\cdot$};
\node(8) at (2,0) {$\cdot$};
\node(9) at (1,-1) {$\cdot$};
\node(10) at (2,-1) {$b$};

\draw(4)--node[above]{$x$}(0); \draw(0)--node[left]{$y$}(1); \draw(8)--node[right]{$u$}(10); \draw(10)--node[below]{$v$}(9);
\draw(2)--node[below]{$v$}(1); \draw(4)--node[right]{$u$}(2);\draw(2)--(5);\draw(6)--(5);
\draw(6)--(7); \draw(8)--node[above]{$x$}(7); \draw(7)--node[left]{$y$}(9);
\end{tikzpicture}$$

In fact, consider the localization $eAe$, where $e=1-e_a-e_b$. The localization $eAe$ is the path algebra of a $\tilde{D}_7$ type quiver, hence infinite representation type. So $A$ must also be infinite representation type.

\end{example}

\section{The $\Ext$-quiver of pointed Hopf algebras of discrete corepresentation type }


Throughout this section, let $H$ be a pointed Hopf algebras of discrete corepresentation type. We are going to determine the $\Ext$-quiver of the comodule category $\Mm^H$.

 \subsection{}
We start from studying the number of arrows coming in (going out of) each vertex of the $\Ext$-quiver $Q$.
Let $v$ be a vertex in a quiver $Q$. Define the star quiver $Star(v)$ as the subquiver of $Q$ whose vertices are the vertex $v$ and all the vertices adjacent to $v$; whose arrows are all the arrows start or terminate in $v$.

\begin{lemma}
If $H$ is a pointed Hopf algebra and $Q$ is the $\Ext$-quiver of $\Mm^H$. Then for group-like elements $g$ and $h$, there is an isomorphism of quivers $Star(g)\cong Star(h)$.
\end{lemma}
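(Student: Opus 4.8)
The plan is to use the Hopf-algebraic structure of $H$ to produce an explicit quiver isomorphism $Star(g) \cong Star(h)$ by translating everything to the group-like $1$ and then translating back. The key observation is that for any group-like $g \in G(H)$, left multiplication by $g$ is a coalgebra automorphism of $H$ (since $\Delta(gx) = \Delta(g)\Delta(x) = (g \otimes g)\Delta(x)$), and similarly right multiplication by $g$, and also conjugation $x \mapsto g x g^{-1}$. These operations permute group-likes and carry skew-primitive spaces to skew-primitive spaces: if $x \in P(a,b)$, then a direct computation on comultiplication shows $gx \in P(ga, gb)$, $xg \in P(ag, bg)$, and $gxg^{-1} \in P(gag^{-1}, gbg^{-1})$. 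In particular $\dim P(a,b) = \dim P(ga, gb) = \dim P(ag, bg)$ for all $a,b,g$, so the number of arrows $a \to b$ in $Q$ equals the number of arrows $ga \to gb$ and $ag \to bg$.

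First I would reduce to the case $h = 1$: it suffices to show $Star(g) \cong Star(1)$ for every group-like $g$, since then $Star(g) \cong Star(1) \cong Star(h)$. Fix $g \in G(H)$. I would then define a map $\varphi$ on vertices of $Star(g)$ by $\varphi(g) = 1$ and, for a group-like $a$ adjacent to $g$, set $\varphi(a) = g^{-1}a$ (using the left-multiplication automorphism $x \mapsto g^{-1}x$). Because $x \mapsto g^{-1}x$ is a coalgebra isomorphism $H \to H$, it sends $P(g,a)$ isomorphically onto $P(1, g^{-1}a)$ and $P(a,g)$ isomorphically onto $P(g^{-1}a, 1)$; hence $a$ is adjacent to $g$ (via an arrow in either direction) if and only if $g^{-1}a$ is adjacent to $1$, and the number of arrows is preserved. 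This shows $\varphi$ restricts to a bijection between the vertex set of $Star(g)$ and that of $Star(1)$, and that it carries each arrow of $Star(g)$ to an arrow of $Star(1)$ with multiplicities matching; since $Q$ is Schurian (as noted after Lemma \ref{non discrete type}, all multiplicities are $0$ or $1$) — or even without that observation — $\varphi$ is an isomorphism of quivers. Applying the same argument with $g$ replaced by $h$ and composing gives $Star(g) \cong Star(1) \cong Star(h)$.

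I do not expect a serious obstacle here; the only point requiring care is checking that the induced vertex map $\varphi$ actually lands inside $Star(1)$ and is surjective onto it, i.e. that "adjacent to $g$" corresponds exactly to "adjacent to $1$" in \emph{both} directions of arrows, which follows from tracking the skew-primitive spaces $P(g,a)$ and $P(a,g)$ separately under $x \mapsto g^{-1}x$. One should also be mindful that the star quiver as defined includes $v$ itself and that loops at $v$ (arrows $v \to v$) must be matched — these correspond to $\dim P(g,g)$ versus $\dim P(1,1)$, which again agree by the multiplication automorphism. If one prefers a cleaner formulation, one can instead use the conjugation automorphism $x \mapsto g^{-1}xg^{-1}\cdot$ — but left multiplication alone suffices and is the most economical choice.
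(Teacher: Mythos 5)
Your proposal is correct and follows essentially the same route as the paper: both reduce to comparing $Star(g)$ with $Star(1)$ and use left multiplication by a group-like (a coalgebra automorphism with inverse given by multiplication by the inverse group-like) to identify the skew-primitive spaces $P(g,a)\cong P(1,g^{-1}a)$ and $P(a,g)\cong P(g^{-1}a,1)$, hence the arrow sets, including loops. The only cosmetic difference is the direction of the translation ($h\mapsto gh$ in the paper versus $a\mapsto g^{-1}a$ in your write-up).
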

\begin{proof}
It suffice to show that $Star(1)\cong Star(g)$ for all $g$. \\
Denote by $A(g,h)$ the set of arrows from $g$ to $h$ in $Q$. Denote by $Out_g=\{h\neq g : |A(g,h)|\neq 0\}$ and $In_g=\{h\neq g : |A(h,g)|\neq 0\}$.  Since for any group-like elements $g,h\in H$, $c:x\mapsto gx$ is a map from $P(1,h)$ to $P(g,gh)$ with an inverse $d: x\mapsto g^{-1}x$. The sets $P(1,h)$ and $P(g,gh)$ have the same cardinality. Hence there is a bijection $\varphi^{out}_h: A(1,h)\to A(g,gh)$. In particular, $h\in Out_1$ if and only if $gh\in Out_g$. Similarly, there are bijections $\varphi^{in}_h: A(h,1)\to A(gh,g)$ and $\varphi^{loop}: A(1,1)\to A(g,g)$.\\
 It is easy to see that there is a quiver isomorphism $\varphi: Star(1)\to Star(g)$ which sends each vertex $h$ to $gh$ and sends an arrow $\alpha: s\to t$ by
$$
\varphi(\alpha)=
\begin{cases}
\varphi^{out}_t(\alpha) & 1\neq t\in Out_1\\
\varphi^{in}_s(\alpha) & 1\neq s\in In_1\\
\varphi^{loop}(\alpha) & s=t=1.\\
\end{cases}
$$
\end{proof}

The following Proposition is an immediate corollary:

\begin{proposition}\label{homogeneous}
If $H$ is a pointed Hopf algebra, then the $\Ext$-quiver $Q$ of $\Mm^H$ is homogeneous in the following sense: there exist $n,p$ such that at every vertex, exactly $n$ arrows go out (to a different vertex), exactly $n$ come in, and exactly $p$ loops exist. Here, $n,p$ could be any cardinalities, although in our case they will be small numbers.
\end{proposition}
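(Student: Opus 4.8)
The plan is to deduce Proposition~\ref{homogeneous} directly from the preceding Lemma, adding only one short bookkeeping step. Recall that the Lemma provides, for every grouplike $g\in G(H)$, an isomorphism of quivers $\varphi:Star(1)\to Star(g)$ with $\varphi(1)=g$, and that the explicit $\varphi$ built in its proof sends loops at $1$ to loops at $g$ (via $\varphi^{loop}$), non-loop out-arrows at $1$ to non-loop out-arrows at $g$ (via the maps $\varphi^{out}_h$), and non-loop in-arrows at $1$ to non-loop in-arrows at $g$ (via the maps $\varphi^{in}_h$). Since $Star(g)$ contains, by definition, every arrow of $Q$ incident to $g$, the first step is simply to read off that the number of loops at $g$, the number of arrows from $g$ to distinct vertices, and the number of arrows into $g$ from distinct vertices are each independent of the choice of $g$; I would write $p$, $n_{out}$, $n_{in}$ for these common cardinalities.

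It then remains to check $n_{out}=n_{in}$. For this I would use the bijection $\varphi^{out}_h:A(1,h)\to A(g,gh)$ from the Lemma (concretely, the restriction of $x\mapsto gx$ to spaces of skew-primitives, which forces $\dim P(1,h)=\dim P(g,gh)$ and hence equality of the corresponding arrow numbers) and specialize it to $g=h^{-1}$. This yields $|A(1,h)|=|A(h^{-1},1)|$ for every $h\neq 1$. Summing over $h\neq 1$ and reindexing via the involution $h\mapsto h^{-1}$ of $G(H)\setminus\{1\}$,
\[
n_{out}=\sum_{h\neq 1}|A(1,h)|=\sum_{h\neq 1}|A(h^{-1},1)|=\sum_{k\neq 1}|A(k,1)|=n_{in},
\]
so setting $n:=n_{out}=n_{in}$ completes the argument.

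I do not expect a genuine obstacle: everything is elementary once the Lemma is available. The only point deserving care is that $G(H)$ need not be finite, so one should avoid the tempting double count comparing $|G(H)|\cdot n_{out}$ with $|G(H)|\cdot n_{in}$ over all vertices of $Q$, which is valid only for finite $G(H)$; the reindexing $h\mapsto h^{-1}$ above works for arbitrary cardinalities. One should also note that $n$ and $p$ are a priori only well-defined cardinals that happen to be constant across $Q$ — their finiteness, and in fact smallness, in the cases of interest will come later from the discreteness hypothesis (e.g.\ via the Schurian observation recorded above).
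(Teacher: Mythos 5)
Your argument is correct. The first half is exactly what the paper does: Proposition~\ref{homogeneous} is recorded as an immediate corollary of the star-isomorphism lemma, and a quiver isomorphism $Star(1)\to Star(g)$ fixing the centre necessarily matches loops with loops, out-arrows with out-arrows, and in-arrows with in-arrows, so the three counts are constant over $G(H)$. Where you genuinely diverge is in establishing $n_{out}=n_{in}$. The paper handles this in a separate follow-up lemma using the antipode: $x\in P(1,g)$ gives $S(x)\in P(g^{-1},1)$, and the invertibility of $S$ yields $\dim P(1,g)=\dim P(g^{-1},1)$. You instead specialize the translation bijection $x\mapsto h^{-1}x:P(1,h)\to P(h^{-1},1)$ already present in the star lemma and then reindex the sum over $h\neq 1$ by the involution $h\mapsto h^{-1}$. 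Both routes are valid; yours is slightly more self-contained, since left multiplication by a grouplike is obviously invertible, whereas the paper's argument implicitly relies on the (true but not free) fact that a pointed Hopf algebra has bijective antipode. Your caution about infinite $G(H)$ and infinite cardinals is also well placed: the term-by-term reindexing is the right way to compare the two sums, and your proof has the small merit of actually justifying the ``same $n$ in and out'' clause of the statement, which in the paper is only supplied by the lemma that follows the Proposition.
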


Furthermore, we have the following:

\begin{lemma}
At each vertex $v$, the number of arrows going in $v$ equals the number of arrows coming out of $v$.
\end{lemma}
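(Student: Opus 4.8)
The plan is to exploit the Hopf structure through the action of grouplikes on skew-primitive spaces. The key observation is that for grouplikes $g,h\in G(H)$ the assignment $x\mapsto g^{-1}xh^{-1}$ is a $\KK$-linear isomorphism from $P(g,h)$ onto $P(h^{-1},g^{-1})$. First I would check that it is well defined: since $\Delta$ is multiplicative and $\Delta(g^{\pm1})=g^{\pm1}\otimes g^{\pm1}$, a one-line computation shows that $\Delta(x)=g\otimes x+x\otimes h$ implies $\Delta(g^{-1}xh^{-1})=h^{-1}\otimes g^{-1}xh^{-1}+g^{-1}xh^{-1}\otimes g^{-1}$. The map is plainly $\KK$-linear, and the analogous computation shows that $y\mapsto gyh$ is its two-sided inverse, so it is bijective. (Up to sign this is just the restriction of the antipode, but verifying it by hand sidesteps any discussion of bijectivity of $S$.) Moreover it carries the trivial skew-primitive $g-h$ to $h^{-1}-g^{-1}$, so, writing $a_{g,h}$ for the number of arrows $g\to h$, Definition~\ref{Ext-quiver} yields
$$a_{g,h}=a_{h^{-1},g^{-1}}\qquad\text{for all }g,h\in G(H).$$

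Next I would sum this identity over $h$. Since $h\mapsto h^{-1}$ permutes $G(H)$, the out-degree of $g$ equals $\sum_h a_{g,h}=\sum_h a_{h^{-1},g^{-1}}=\sum_k a_{k,g^{-1}}$, which is precisely the in-degree of $g^{-1}$; thus the out-degree of $g$ equals the in-degree of $g^{-1}$ for every $g$. To conclude, I would combine this with the preceding lemma: the isomorphisms $Star(1)\cong Star(g)$ constructed there send $1$ to $g$ and respect the orientation of arrows, so the out-degree and the in-degree are each constant on $G(H)$. Specializing the displayed identity to $g=1$ then forces these two constants to agree, whence the out-degree equals the in-degree at every vertex.

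All the computations are routine, so the only real content is conceptual. The grouplike action identifies $P(g,h)$ not with itself but with $P(h^{-1},g^{-1})$ — with source and target simultaneously inverted and interchanged — so on its own the multiplicative argument only compares the vertex $g$ with the vertex $g^{-1}$, and it is the homogeneity of $Q$ from the preceding lemma that is needed to bridge those two vertices. A secondary, purely cosmetic, point is keeping track of the ``$-1$'' in the arrow count and of loops at a vertex, but since the isomorphism $P(g,h)\cong P(h^{-1},g^{-1})$ matches trivial skew-primitives with trivial skew-primitives and loops with loops, both bookkeeping issues take care of themselves.
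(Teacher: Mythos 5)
Your argument is correct and is essentially the paper's: the paper applies the antipode to identify $P(1,g)$ with $P(g^{-1},1)$ (invoking invertibility of $S$) and then uses Proposition~\ref{homogeneous}, while you write out the same map explicitly as $x\mapsto g^{-1}xh^{-1}$ and verify bijectivity by hand, which is a cosmetic difference. Your remark that the antipode only compares the vertex $g$ with $g^{-1}$, so that homogeneity is genuinely needed to close the loop, is exactly the (implicit) logic of the paper's proof.
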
 

\begin{proof}
Notice that a loop always contributes to both an arrow going in and an arrow coming out of a vertex. So together with Proposition \ref{homogeneous},  we just need to prove the number going in $1$ from a distinct vertex equals the number of arrows coming out of $1$ to a distinct vertex.

In fact, if $x\in P(1,g)$ for some group like element $g\neq 1$, then $S(x)\in P(g^{-1},1)$, where $S$ is the antipode. Since $S$ is invertible, $\dim P(1,g)=\dim P(g^{-1},1)$. The statement follows.
\end{proof}

\subsection{}
In addition, since $H$ is discrete corepresentation type, the $\Ext$-quiver $Q$ is Schurian. Furthermore, we are going to show that more restrictions need to be put on the number of arrows going out/coming in at each vertex in $Q$.

\begin{lemma}\label{ab=ba}
Let $H$ be a discrete corepresentation type Hopf algebra. If there are group-like elements $a\neq b$, such that $x\in P(1,a)$ and $y\in P(1,b)$ are non-trivial skew primitives, then $ab=ba$.
\end{lemma}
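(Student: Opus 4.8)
The plan is to argue by contradiction: assume $ab \neq ba$ and produce a finite subquiver of the $\Ext$-quiver $Q$ of $\Mm^H$ whose degree-one part $\KK Q'_1$ is of infinite corepresentation type, then invoke Lemma \ref{non discrete type} (or Lemma \ref{lem:quotient quiver}). First I would use the hypothesis to locate vertices: since $x \in P(1,a)$ is a nontrivial skew primitive, there is an arrow $1 \to a$ in $Q$, and likewise an arrow $1 \to b$ from $y \in P(1,b)$. By the homogeneity of $Q$ (Proposition \ref{homogeneous}) and the translation maps $c \mapsto gc$ used in the proof of the Star-quiver lemma, these arrows propagate: left-multiplying $x$ by $b$ gives a nontrivial element of $P(b, ba)$, so there is an arrow $b \to ba$; left-multiplying $y$ by $a$ gives a nontrivial element of $P(a, ab)$, so there is an arrow $a \to ab$. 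Thus $Q$ contains the vertices $1, a, b, ab, ba$ with arrows $1\to a$, $1\to b$, $a \to ab$, $b \to ba$.

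Next I would examine whether these five vertices are distinct and how $ab$ relates to $ba$. The subtle point is that $a \neq b$ forces $ab \neq a$ and $ba \neq b$ and so on, so the only coincidences that can occur among $\{1,a,b,ab,ba\}$ are forced by group relations; in any case, if $ab \neq ba$ we get at least the configuration of a "commutative-square-with-a-corner-split", i.e. two arrows out of $1$ and (after accounting for the target vertices $ab$ and $ba$ being genuinely different) a quiver that contains, as a subquiver or as a quotient-quiver preimage, a bipartite non-Dynkin diagram such as $\widetilde{D}_4$ or $\widetilde{A}_3$. Here I expect to use Lemma \ref{lem:quotient quiver}: one identifies a homomorphism of quotient quivers from a finite bipartite non-Dynkin quiver onto the relevant subquiver of $Q$ (gluing the appropriate target vertices), exactly in the style of Example \ref{no 3 arrows}. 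Alternatively, since the four arrows $1\to a$, $1\to b$, $b\to ba$, $a\to ab$ together with the inclusions of skew-primitive spaces assemble into a subcoalgebra of $\KK Q'_1$ of the shape considered in Example \ref{localization exm}, one localizes away the source and sink to reach a tame or wild hereditary algebra.

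The main obstacle is the bookkeeping of which grouplikes among $1, a, b, ab, ba, a^{-1}, b^{-1}, aba^{-1}, \dots$ coincide, and ensuring that in \emph{every} degenerate case (e.g. $a^2 = 1$, or $ab = a^{-1}b^{-1}$, etc.) the resulting finite subquiver — possibly after passing to a quotient quiver — still carries an infinite-type degree-one subcoalgebra. I would handle this by first doing the generic case ($1,a,b,ab,ba$ all distinct and no extra relations), where the configuration visibly contains a bipartite non-Dynkin shape, and then checking that collapsing vertices only makes the quiver "smaller" in a way that still produces loops or multiple arrows, which by the Schurian observation following Lemma \ref{non discrete type} and by Lemma \ref{lem:quotient quiver} already contradicts discreteness. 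Once a contradiction is reached in all cases, we conclude $ab = ba$.
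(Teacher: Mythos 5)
There is a genuine gap at the heart of your argument. The configuration you actually construct --- the arrows $1\to a$, $1\to b$, $a\to ab$, $b\to ba$ on the five vertices $1,a,b,ab,ba$ --- is, in the generic case where these vertices are distinct, a tree of Dynkin type $A_5$. Its path coalgebra is of \emph{finite} corepresentation type, so Lemma \ref{non discrete type} gives no contradiction, and no amount of collapsing vertices via Lemma \ref{lem:quotient quiver} can help: a quotient-quiver map preserves the number of arrows and cannot increase the number of vertices, so no bipartite non-Dynkin quiver maps onto this four-arrow $A_5$ path. Your claim that the configuration ``visibly contains a bipartite non-Dynkin shape'' is where the proof breaks down; it does not.

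The missing idea is to use \emph{right} translation as well as left translation, and this is exactly where the hypothesis $ab\neq ba$ does its work. From $\Delta(ya)=a\otimes ya+ya\otimes ba$ one gets a nontrivial element of $P(a,ba)$, hence an arrow $a\to ba$; similarly $xb\in P(b,ab)$ gives an arrow $b\to ab$. Together with your arrows $a\to ab$ (from $ay$) and $b\to ba$ (from $bx$), this yields four arrows from the two sources $a,b$ to the two sinks $ab,ba$. If $a=1$ or $b=1$ the lemma is trivial; otherwise $a\neq 1$, $b\neq 1$, $a\neq b$ and $ab\neq ba$ force $a,b,ab,ba$ to be mutually distinct with no case analysis, and the four arrows form a bipartite $\widetilde A_3$ cycle (an oriented $4$-cycle with alternating orientation), which is non-Dynkin and bipartite, so Lemma \ref{non discrete type} applies directly and gives the contradiction. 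This is the paper's proof; note that it entirely avoids the vertex $1$ and the degenerate-case bookkeeping you were anticipating.
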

\begin{proof}
If $a=1$ or $b=1$, the statement holds obviously. Now assume $a\neq 1$, $b\neq 1$ and $ab\neq ba$, then there are arrows:
$$
\begin{tikzpicture}[->]
\node (a) at (0,2) {$a$};
\node (b) at (2,2) {$b$};
\node (ab) at (0,0) {$ab$};
\node(ba) at (2,0) {$ba$};

\draw (a)--node[left]{$ay$}(ab);
\draw(a)--node[left]{$ya$}(ba);
\draw(b)--node[right]{$xb$}(ab);
\draw(b)--node[right]{$xa$}(ba);

\end{tikzpicture}$$
Since four vertices $a$, $b$, $ab$ and $ba$ are mutually distinct, this $\tilde A_3$ type quiver is a sub-quiver of the $\Ext$-quiver of $\Mm^H$, which is a contradiction due to Lemma \ref{non discrete type}.
\end{proof}


\begin{proposition}\label{two arrows}
Let $H$ be a Hopf algebra of discrete representation type and $Q$ be the $\Ext$-quiver of $\Mm^H$. Then for each vertex of $Q$, there are at most $2$ arrows going out (and respectively at most 2 arrows coming in).
\end{proposition}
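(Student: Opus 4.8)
The plan is to argue by contradiction: suppose some vertex of $Q$ has at least three arrows leaving it. By Proposition \ref{homogeneous} (homogeneity), we may as well assume this vertex is $1$, so there are non-trivial skew primitives $x_i\in P(1,g_i)$ for $i=1,2,3$ with $g_1,g_2,g_3$ pairwise distinct grouplikes different from $1$ (distinctness follows from Schurianness). The goal is to produce a finite subquiver $Q'$ of $Q$ — or a finite bipartite non-Dynkin quiver mapping onto a subquiver of $Q$ via a quotient-quiver homomorphism — so that Lemma \ref{non discrete type} or Lemma \ref{lem:quotient quiver} yields a contradiction with discreteness.

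The key observation is that Lemma \ref{ab=ba} already forces $g_i g_j = g_j g_i$ for all $i,j$, so the subgroup $\langle g_1,g_2,g_3\rangle$ is abelian. Now I would look at the arrows among the grouplikes $1, g_1, g_2, g_3$ and their products. For each pair $i\ne j$, left-multiplying $x_j$ by $g_i$ gives a non-trivial element of $P(g_i, g_ig_j)$, i.e. an arrow $g_i \to g_ig_j$; symmetrically $x_i g_j \in P(g_j, g_ig_j)$ gives an arrow $g_j \to g_ig_j$. Together with the original arrows $1\to g_i$, these arrows assemble, on the vertex set $\{1, g_1,g_2,g_3\} \cup \{g_ig_j : i<j\}$ (and possibly the triple product $g_1g_2g_3$), into a subquiver whose underlying graph I expect to contain a copy of $\widetilde{D}_4$ (or some other non-Dynkin Euclidean graph) — the vertex $1$ has three neighbours $g_1,g_2,g_3$, and the three "square" vertices $g_ig_j$ reconnect them. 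Since this subquiver is bipartite (sources $\{1, g_ig_j\}$, sinks $\{g_i\}$, or a suitable bipartition), $\KK Q'_1 = \KK Q'$, and a bipartite non-Dynkin quiver is of infinite corepresentation type; Lemma \ref{non discrete type} then gives the contradiction. The respective statement about incoming arrows follows by applying the antipode as in the proof of the previous lemma (or simply by the already-established equality of in-degree and out-degree).

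The main obstacle I anticipate is bookkeeping the vertices: the products $g_ig_j$ (and $g_1g_2g_3$) need not be distinct from each other or from $1, g_1, g_2, g_3$, and different coincidence patterns give different subquivers. I would handle this case-by-case, checking in each degenerate configuration that one still extracts either a non-Dynkin bipartite subquiver of $Q$, or a non-Dynkin bipartite quiver $Q$ admitting a quotient-quiver homomorphism onto a subquiver of $Q$ (as in Example \ref{no 3 arrows}), so that Lemma \ref{lem:quotient quiver} applies. For instance if $g_1g_2 = g_3$ one gets extra identifications producing a quiver with multiple arrows or an oriented cycle with a pendant, which after the quotient-quiver trick is still seen to be wild/infinite type. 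A clean way to organize this is: first dispose of the generic case where all the relevant grouplikes are distinct (getting $\widetilde{D}_4$ or larger), then observe that any coincidence only makes the combinatorics ``smaller'' in a way that collapses arrows and still leaves a non-Dynkin bipartite pattern after passing to the appropriate quotient quiver.

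Finally, once at most two outgoing arrows per vertex is established, the symmetric statement for incoming arrows is immediate: by the previous lemma the in-degree equals the out-degree at every vertex, so the bound $\le 2$ transfers automatically. This gives Proposition \ref{two arrows}.
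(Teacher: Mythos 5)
Your overall strategy matches the paper's: force commutativity via Lemma \ref{ab=ba}, build a configuration out of the products $g_ig_j$, and invoke Lemma \ref{non discrete type} (or Lemma \ref{lem:quotient quiver} in degenerate cases). However, the specific subquiver you propose does not do the job as stated. Once you include the vertex $1$ and the arrows $1\to g_i$ together with the arrows $g_i\to g_ig_j$, each $g_i$ has both an incoming and an outgoing arrow, so the subquiver is \emph{not} bipartite in the sense of the paper (every vertex a source or a sink); your proposed bipartition with ``sinks $\{g_i\}$'' is incompatible with the arrows $g_i\to g_ig_j$. This matters because the ``in particular'' clause of Lemma \ref{non discrete type} relies on $\KK Q'_1=\KK Q'$, which holds only for bipartite $Q'$; for a non-bipartite $Q'$ the coalgebra $\KK Q'_1$ is the radical-square-zero piece, whose representation type is governed by the separated quiver, not by the underlying graph of $Q'$, so ``non-Dynkin underlying graph'' is not by itself a licence to conclude infinite type. (Also, the star at $1$ with three neighbours is Dynkin $D_4$, not Euclidean $\widetilde D_4$ --- you would need four outgoing arrows for that.)

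The repair is exactly what the paper does: drop the vertex $1$ entirely and keep only the hexagon
$g_1\to g_1g_2\leftarrow g_2\to g_2g_3\leftarrow g_3\to g_3g_1\leftarrow g_1$,
which is bipartite of type $\widetilde A_5$ (sources $g_i$, sinks $g_ig_j$; the $g_ig_j$ are pairwise distinct because the $g_i$ are). If all six vertices are distinct this is a genuine bipartite non-Dynkin subquiver and Lemma \ref{non discrete type} applies; if $\{g_i\}\cap\{g_ig_j\}\neq\emptyset$ the six arrows remain distinct and one gets a quotient of the hexagon, handled by Lemma \ref{lem:quotient quiver} --- this is the case analysis you correctly anticipated, and it also absorbs the possibility that one of the $g_i$ equals $1$ (a loop at $1$), which your setup excludes but which the proposition's count of outgoing arrows must cover. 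Your closing reduction of the incoming-arrow bound to the outgoing one via in-degree $=$ out-degree is fine.
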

\begin{proof}
According to Proposition \ref{homogeneous}, it suffice to prove for the vertex $1$.
If we have  3 outgoing arrows from $1$ say to $a,b,c$, then consider the following diagram

\begin{center}
\begin{tikzpicture}[->]
\node(1) at (-.5, 0.8) {$a$};
\node(2) at (0.5,0.8) {$ab$};
\node(3) at (1,0) {$b$};
\node(4) at (0.5,-0.8) {$bc$};
\node(5) at (-0.5,-0.8) {$c$};
\node(6) at (-1,0) {$ca$};

 \draw[->] (1)--node[above]{\tiny$ay$}(2);
  \draw[->] (3)--node[right]{\tiny$xb$}(2);
  \draw[->] (3)--node[right]{\tiny$bz$}(4);
   \draw[->] (5)--node[below]{\tiny$yc$}(4);
    \draw[->] (5)--node[left]{\tiny$cx$}(6);
    \draw[->] (1)--node[left]{\tiny$za$}(6);
\end{tikzpicture}
\end{center}

Notice that the vertices $a$, $b$, $c$ must be mutually distinct since $Q$ is Schurian. Hence $ab(=ba), bc, ca$ must be mutually distinct also.

 If $\{a,b,c\}\cap\{ab,bc,ca\}=\emptyset$, i.e. all six vertices are mutually distinct, then there is a  $\tilde{A_5}$ type subquiver  of the $\Ext$-quiver $Q$. By Lemma \ref{non discrete type}, $H$ is not discrete corepresentation type, which is a contradiction.

Now assume $\{a,b,c\}\cap\{ab,bc,ca\}\neq\emptyset$. It is easy to see that six arrows $ay$, $xb$, $bz$, $yc$, $cx$ and $za$ in the diagram above are still mutually distinct.  Then the $\Ext$-quiver $Q$ contains a subquiver $Q'$, which is a quotient quiver of the above diagram.

\begin{figure}[h]
  \begin{center}
\begin{tikzpicture}[->]
\node(1) at (0,0) {$1$};
\node(2) at (0,1) {$a$};
\node(4) at (1,1) {$ab$};
\node(3) at (1,0) {$b$};
\node (c) at (-1,0.5) {$Q'$:};
\draw(1)--(2);
\draw(2)--(4);
\draw(3)--(4);
\draw(1)--(3);

\draw [->] (2.west)arc(330:30:0.15);
\draw [->] (3.east)arc(150:-150:0.15);
\end{tikzpicture}
\end{center}
\caption{The subquiver $Q'$ when $c=1$, $a\neq b^{-1}$}
\end{figure}
So by Lemma \ref{lem:quotient quiver}, $H$ is not discrete corepresentation type, which is a contradiction.
\end{proof}

\subsection{}

 Now we can make a brief observation about the possible $\Ext$-quivers for $\Mm^H$. Due to Proposition \ref{homogeneous}, the $\Ext$-quiver will be determined by the  L.I.C. $B\subseteq H$ containing $1$ (see Definition \ref{LIC}).


We are going to study the possibilities for the group $G(B)$ of group-likes of $B$ and the $\Ext$-quiver $Q$.

\begin{proposition}\label{finite case}
Assume $H$ is discrete corepresentation type and there are group-like elements $a$ and $b$ such that $P(1,a)$ and $P(1,b)$ contain non-trivial skew primitives.  If the subgroup $G(B)=\langle a,b\rangle$ of $G(H)$ is finite, then $a=b$. In this case, there is only $1$ arrow coming out/ going in to each vertex in the $\Ext$-quiver $Q$.  In particular, if $H$ is finite dimensional, then $a$=$b$.
\end{proposition}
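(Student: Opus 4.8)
The plan is to prove the contrapositive of the first assertion: if $a\neq b$ and $G:=\langle a,b\rangle$ is finite, then $H$ is \emph{not} of discrete corepresentation type, contradicting the hypothesis; hence $a=b$. The last sentence of the proposition is then immediate, since $\dim_{\KK}H<\infty$ forces $G(H)$, and therefore $G(B)=\langle a,b\rangle$, to be finite. So assume $a\neq b$; relabelling if necessary, we may assume $a\neq 1$.

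\textbf{A finite-dimensional subcoalgebra.} Fix non-trivial skew primitives $x\in P(1,a)$ and $y\in P(1,b)$. For each $g\in G$ the left translates $gx\in P(g,ga)$ and $gy\in P(g,gb)$ are again non-trivial, and $\Delta(gx)=g\otimes gx+gx\otimes ga$, $\Delta(gy)=g\otimes gy+gy\otimes gb$. Set
$$
D:=\Span_{\KK}\bigl(G\cup\{\,gx:g\in G\,\}\cup\{\,gy:g\in G\,\}\bigr)\subseteq H .
$$
The two comultiplication formulas show $D$ is a finite-dimensional subcoalgebra of $H$; since $a\neq b$ we have $ga\neq gb$ for every $g$, so $gx$ and $gy$ lie in distinct spaces of skew primitives, which with linear independence of the grouplikes yields $\dim_{\KK}D=3|G|$. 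Hence $D$ has Loewy length $2$, its $\Ext$-quiver is the Cayley quiver $Q'$ with vertex set $G$ and the two arrows $g\to ga$, $g\to gb$ leaving each vertex, and $D\cong\KK Q'_1$, the degree $\leq 1$ part of the path coalgebra of $Q'$. In particular $Q'$ is a finite subquiver of the $\Ext$-quiver $Q$ of $\Mm^{H}$.

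\textbf{$Q'$ is a quotient of a bipartite non-Dynkin quiver.} Now I would apply Lemma \ref{lem:quotient quiver}, for which it is enough to produce a finite bipartite non-Dynkin quiver that maps onto $Q'$ as a homomorphism of quotient quivers. Let $\Sigma$ have vertex set $G\times\{0,1\}$ and one arrow $(g,0)\to(h,1)$ for each arrow $g\to h$ of $Q'$; it is bipartite, and collapsing $(g,0)$ with $(g,1)$ recovers $Q'$ as a quotient quiver, since every $(g,1)$ is a sink and every $(h,0)$ a source, so the arrow counts agree (just as in Example \ref{no 3 arrows}). As right multiplication by $a$ and by $b$ are bijections of the finite group $G$ and $a\neq b$, each vertex of $Q'$ has exactly two arrows out and two in and no repeated arrow; therefore $\Sigma$ is a simple $2$-regular bipartite graph, hence a disjoint union of cycles of even length $\geq 4$, each of which is a Euclidean diagram $\widetilde A_{2k-1}$. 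Thus $\Sigma$ is a finite bipartite non-Dynkin quiver, and Lemma \ref{lem:quotient quiver} gives that $H$ is not of discrete corepresentation type, a contradiction. Therefore $a=b$. (Equivalently, $D^{*}\cong\KK[(Q')^{op}]/\mathrm{rad}^{2}$ is a radical-square-zero algebra with separated quiver $\Sigma$, so $D$ is a finite-dimensional subcoalgebra of $H$ of infinite corepresentation type and Proposition \ref{discrete type dual} applies.)

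\textbf{Exactly one arrow in and out; the main obstacle.} Assume now $a=b$, so $G(B)=\langle a\rangle$ and $Q$ has the arrow $1\to a$. Were there a further arrow $1\to c$ with $c\neq a$, then $c$ is link-connected to $1$, so $c\in G(B)=\langle a\rangle$; then $P(1,a)$ and $P(1,c)$ are both non-trivial and $\langle a,c\rangle=\langle a\rangle$ is finite, so the first part forces $a=c$, a contradiction. Since $Q$ is Schurian there is no repeated arrow $1\to a$ either, so vertex $1$ has exactly one outgoing arrow; by Proposition \ref{homogeneous} so does every vertex, and by the lemma that in- and out-degrees coincide at each vertex, so too does every vertex have exactly one incoming arrow. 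The crux of the whole argument is the middle step: one must check carefully that $\Sigma$ really satisfies the hypotheses of Lemma \ref{lem:quotient quiver} -- that the collapsing map is a homomorphism of quotient quivers in the exact sense of the paper, and that $2$-regularity forces a Euclidean $\widetilde A$-component rather than a Dynkin one -- and, in the first step, that the dimension count $\dim_{\KK}D=3|G|$ (which is precisely where $a\neq b$ is used) excludes spurious skew primitives that might otherwise enlarge the $\Ext$-quiver of $D$.
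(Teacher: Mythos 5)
Your proof is correct, and it reaches the contradiction by the same underlying mechanism as the paper --- finiteness of $\langle a,b\rangle$ together with the two distinct arrows leaving each grouplike forces a Euclidean $\widetilde A$-type bipartite configuration --- but the execution is genuinely different and in one respect more robust. The paper first invokes Lemma \ref{ab=ba} to get $ab=ba$, takes the minimal $n>1$ with $(a^{-1}b)^n=1$, and exhibits one explicit alternating cycle $1\to a\leftarrow a^{-1}b\to a^{-1}b^2\leftarrow\cdots\to a$ of type $\widetilde A_{2n-1}$, concluding directly with Lemma \ref{non discrete type}. You instead take the entire Cayley quiver $Q'$ of $G=\langle a,b\rangle$ with respect to $\{a,b\}$, pass to its separated (bipartite double) quiver $\Sigma$, note that $\Sigma$ is a simple $2$-regular bipartite graph and hence a disjoint union of cycles $\widetilde A_{2k-1}$ with $2k\geq 4$, and conclude with Lemma \ref{lem:quotient quiver}. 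What your route buys: you never need commutativity of $a$ and $b$ (the paper needs Lemma \ref{ab=ba} just to label the vertices of its zigzag), and you are immune to accidental coincidences among the sources and sinks of that zigzag (for instance when some $(a^{-1}b)^ia$ equals some $(a^{-1}b)^j$, or when $a=1$ or $b=1$ so that loops appear), cases in which the paper's displayed picture is not literally a bipartite subquiver and would itself need the quotient-quiver lemma to repair; the paper's route buys brevity and an explicit minimal witness. Your treatment of the remaining assertions (exactly one arrow in and out via Schurianness, Proposition \ref{homogeneous} and the in-degree equals out-degree lemma; the finite-dimensional case) fills in exactly what the paper dismisses as clear, and is correct.
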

\begin{proof}
If $a\neq b$, then in the $\Ext$-quiver, there are two arrows going out of each vertex. By Lemma \ref{ab=ba}, $ab=ba$.
%
%
%
%
%
%
%
%
Since the subgroup generated by $a$ and $b$ is finite, assume $n>1$ is the smallest integer such that $(a^{-1}b)^n=1$. Then, there is a subquiver:
$$
\begin{tikzpicture}[->]
\node (a) at (0,1) {$1$};
\node (b) at (-1,0) {$a$};
\node (c) at (1,0) {$b$};
\node(a2) at (2,1) {$a^{-1}b $};
\node (c2) at (3,0) {$a^{-1}b^2$};
\node(bn)at (7,0){$(a^{-1}b)^{n-1}b=a$};
\draw(a)--(b);
\draw(a)--(c);
\draw(a2)--(c);
\draw(a2)--(c2);
\draw(4,1)--(c2);
\node at(5,.5){$\cdots$};
\draw (6,1)--(bn);
\end{tikzpicture}
$$
of the $\Ext$-quiver of $\Mm^H$.
By Lemma \ref{non discrete type}, $H$ is not discrete type, which is a contradiction. The rest parts of the statements are clear.
\end{proof}

\begin{proposition}\label{2 arrows case}
Assume $H$ is discrete corepresentation type and there are group-like elements $a$ and $b$ such that $P(1,a)$ and $P(1,b)$ contain non-trivial skew primitives.  If $a\neq b$, then $G(B)=\langle a,b| ab=ba, a^m=b^n\rangle$ for some integers $(m,n)\neq \pm(1,1)$.
\end{proposition}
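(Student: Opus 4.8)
The plan is to reduce the statement to the structure theory of the finitely generated abelian group $G(B)$. By Lemma \ref{ab=ba} we already know $ab=ba$, so $G(B)=\langle a,b\rangle$ is abelian. First I would note that $G(B)$ cannot be finite: if it were, Proposition \ref{finite case} would force $a=b$, contrary to hypothesis. So $G(B)$ is a finitely generated abelian group of rank $1$ or $2$. The goal is to rule out rank $2$ (i.e. $G(B)\cong\ZZ^2$ is forbidden) and to pin down exactly which relations $a^m=b^n$ may occur, keeping track that $(m,n)\neq\pm(1,1)$ (a relation $a=b$ would again contradict $a\neq b$).

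The key tool is again Lemma \ref{non discrete type} together with Lemma \ref{lem:quotient quiver}: I would argue that whenever the subgroup of relations between $a$ and $b$ is ``too small'', one can exhibit inside the $\Ext$-quiver $Q$ either an oriented cycle $\widetilde{A}_k$ on mutually distinct vertices, or a bipartite non-Dynkin quiver as a quotient quiver, both of which contradict discreteness. Concretely, at each grouplike $g$ there are two outgoing arrows $g\to ga$ and $g\to gb$ (the skew-primitives $gx\in P(g,ga)$ and $gy\in P(g,gb)$, using that $P(1,a)$, $P(1,b)$ contain nontrivial elements and translating by $g$ as in the proof of Proposition \ref{finite case}), and symmetrically two incoming arrows. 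Starting from $1$ and following the pattern of alternately multiplying by $a^{-1}$ and by $b$ (or $a$ and $b^{-1}$), one traces out a ``staircase'' path in $Q$ through the cosets $a^{-i}b^{j}$; this closes up into an oriented cycle precisely when some nontrivial word $a^{-m}b^{n}=1$ holds, i.e. $a^m=b^n$, and one checks the intermediate vertices are mutually distinct (using Schurian-ness and minimality of the relation), producing a $\widetilde{A}_k$ subquiver — exactly the contradiction already used in Proposition \ref{finite case}. Hence there must exist a relation $a^m=b^n$ with $(m,n)\neq(0,0)$; I would also handle relations of the form $a^m=1$ or $b^n=1$ separately, showing these again close up a cycle and are forbidden, so the relation genuinely mixes $a$ and $b$.

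It remains to show that a single relation $a^m=b^n$ (together with commutativity) is a presentation of $G(B)$, i.e. that there are no further independent relations. Suppose $G(B)$ satisfied two independent relations; then $G(B)$ would be finite, contradicting the first step. So the subgroup $R\subseteq\ZZ^2$ of all $(i,j)$ with $a^ib^j=1$ is a rank-$1$ subgroup, hence cyclic, generated by a single primitive vector which we may write as $(m,-n)$ with $\gcd(m,n)=1$; this yields $G(B)=\langle a,b\mid ab=ba,\ a^m=b^n\rangle\cong\ZZ$, and $(m,n)\neq\pm(1,1)$ since $(m,n)=\pm(1,1)$ would give $a=b^{\pm1}$, and after absorbing signs into the choice of generators $a=b$, contradicting $a\neq b$.

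The main obstacle I expect is the bookkeeping in the second paragraph: one must verify that the vertices along the staircase $1,a,a^{-1}b,a^{-1}b^2,\dots$ (or the appropriate analogue) are pairwise distinct so that the cycle obtained is a genuine $\widetilde{A}_k$ subquiver rather than something degenerate, and — when distinctness fails — that the relevant identifications produce a bipartite non-Dynkin quotient quiver to which Lemma \ref{lem:quotient quiver} applies, as in the proof of Proposition \ref{two arrows}. Organizing the case analysis on which coset coincidences can occur, while only ever invoking the two forbidden-subquiver lemmas, is the delicate part; everything else is the standard classification of subgroups of $\ZZ^2$.
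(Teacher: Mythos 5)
Your first and third paragraphs contain the paper's actual argument, but your second paragraph introduces a genuine error: you set out to prove that $G(B)\cong\ZZ^2$ is forbidden, i.e.\ that a nontrivial relation $a^m=b^n$ with $(m,n)\neq(0,0)$ must exist. That is false, and it is not what the proposition asserts. The presentation $\langle a,b\mid ab=ba,\ a^m=b^n\rangle$ with $(m,n)=(0,0)$ is simply the free abelian group of rank $2$; this is exactly the grid-quiver case $Q^{0,0}$ of Definition \ref{defn:Qmn}, and Theorem \ref{B is discrete} shows the corresponding Hopf algebras $B^{0,0}(\lambda,s,t,k)$ \emph{are} of discrete corepresentation type. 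Your staircase argument is also logically inverted: as in Proposition \ref{finite case}, closing the zigzag into a cycle produces a forbidden $\widetilde{A}_k$ subquiver, so the correct conclusion of that construction is that the relations which close it up (of the form $(a^{-1}b)^k=1$) \emph{cannot} hold --- not that some relation must hold; an infinite, non-closing staircase contains only Dynkin $A_k$ pieces and yields no contradiction. Likewise your claim that relations $a^m=1$ or $b^n=1$ are forbidden is wrong: $(m,n)=(2,0)$, i.e.\ $a^2=1$, survives all the way to the final classification in Theorem \ref{classify B}.

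The intended proof is essentially your third paragraph with the false input from the second removed: $G(B)$ is generated by $a,b$ and abelian by Lemma \ref{ab=ba}, hence a quotient $\ZZ^2/G_0$; if $G_0$ had rank $2$ then $G(B)$ would be finite and Proposition \ref{finite case} would force $a=b$; hence $G_0$ has rank at most $1$, so it is cyclic, generated by some $a^mb^{-n}$ (possibly trivial, $m=n=0$), which is precisely the claimed presentation, with $(m,n)\neq\pm(1,1)$ because $a\neq b$. Two smaller slips in that paragraph: a rank-one subgroup of $\ZZ^2$ need not be generated by a \emph{primitive} vector (consider $\langle(2,0)\rangle$), so you may not assume $\gcd(m,n)=1$; and the quotient is $\ZZ\oplus\ZZ/\gcd(m,n)$ in general, not $\ZZ$. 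Neither of these affects the stated conclusion, but the attempted exclusion of $\ZZ^2$ does.
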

\begin{proof}
By the assumption and Proposition \ref{two arrows}, there are exactly two arrows coming out of $1$. So $G(B)$ is a group generated by $a$ and $b$. We have shown in Lemma \ref{ab=ba} that $ab=ba$. So $G(B)$ is also an abelian group, and hence a quotient of the free abelian group $\langle a,b\rangle$.  That is $G(B)=\langle a,b|ab=ba\rangle/G_0$ for some subgroup $G_0\subseteq \langle a,b|ab=ba\rangle$. Notice that $G_0$ is also free and $rank (G_0)\leq 2$. If $G_0$ has rank $2$, then $G(B)$ is a finite group. Hence by Theorem \ref{finite case} $a=b$, a contradiction. Therefore, $rank (G_0)\leq 1$ which means $G_0$ is the subgroup of $\langle a,b|ab=ba\rangle$ generated by $a^mb^{-n}$ for some $m, n$ ($rank (G_0)=0$ if and only if $m=n=0$). So $G(B)=\langle a,b| ab=ba, a^m=b^n\rangle$.
 \end{proof}

%
%


We can now list all the possible $\Ext$-quivers. First we introduce the following notation.

\begin{definition}\label{defn:Qmn}
Let $(m,n)$ be a pair of integers such that $(m,n)\neq \pm(1,1)$. Let $Q^{m,n}$ be a quiver defined as the following: The set of vertices of $Q_0^{m,n}=\langle a,b| ab=ba, a^m=b^n\rangle$, for each vertex, there is a unique arrow $a^ib^j\to a^{i+1}b^{j}$ and a unique arrow $a^ib^j\to a^{i}b^{j+1}$. We call $Q^{0,0}$ the grid quiver and $Q^{m,n}$ a folded grid quiver when $(m,n)\neq (0,0)$.
\end{definition}

\begin{figure}[h]
$\begin{tikzpicture}[->]
\node(1) at (-1,1) {$b^2$};
\node(2) at (0,1) {$\cdot$};
\node(3) at (1,1) {$\cdot$};

\node(4) at (-1,0) {$b$};
\node(5) at (0,0) {$ab$};
\node(6) at (1,0) {$\cdot$};

\node(7) at (-1,-1) {$1$};
\node(8) at (0,-1) {$a$};
\node(9) at (1,-1) {$a^2$};

\draw (-2,1)node[left]{$\cdots$}--(1);
\draw (1)--(2);
\draw (2)--(3);
\draw (3)--(2,1)node[right]{$\cdots$};

\draw (-2,0)node[left]{$\cdots$}--(4);
\draw (4)--(5);
\draw (5)--(6);
\draw (6)--(2,0)node[right]{$\cdots$};

\draw (-2,-1)node[left]{$\cdots$}--(7);
\draw (7)--(8);
\draw (8)--(9);
\draw (9)--(2,-1)node[right]{$\cdots$};

\draw (-1,-2)node[below]{$\cdots$}--(7);
\draw (7)--(4);
\draw (4)--(1);
\draw (1)--(-1,2)node[above]{$\cdots$};

\draw (0,-2)node[below]{$\cdots$}--(8);
\draw (8)--(5);
\draw (5)--(2);
\draw (2)--(0,2)node[above]{$\cdots$};

\draw (1,-2)node[below]{$\cdots$}--(9);
\draw (9)--(6);
\draw (6)--(3);
\draw (3)--(1,2)node[above]{$\cdots$};

\end{tikzpicture}$

\caption{The grid quiver $Q^{0,0}$}
\end{figure}
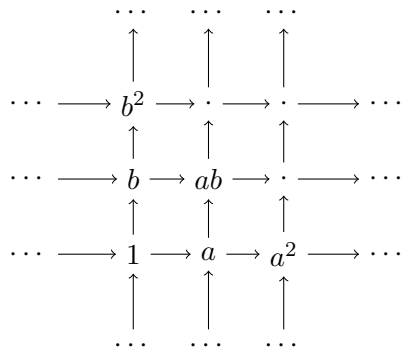

 \begin{theorem} \label{ext quiver}
If $H$ is a discrete corepresentation type Hopf algebra and $B$ is the maximal connected sub-coalgebra of $H$ containing $1$. Then the $\Ext$-quiver $Q$ of $\Mm^B$ is one of the following:

$(1)$ The single vertex $1$ and $G(B)=\{1\}$.

$(2)$ A complete oriented cycle, and $G(B)=\langle a | a^n=1\rangle$;

$(3)$ $\xymatrix{\cdots\ar[r]&\cdot\ar[r]&\cdot \ar[r]&\cdot\ar[r]&\cdots}$, and $G(B)=\langle a \rangle$;

$(4)$  $Q^{m,n}$ and $G(B)=\langle a,b| a^m=b^n\rangle$ for some integers $(m,n)\neq\pm(1,1)$.

The $\Ext$-quiver of $\Mm^H$ is a multiple (possibly infinite) copy of one of the above quiver.
\end{theorem}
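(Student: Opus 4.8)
\textbf{Proof proposal for Theorem \ref{ext quiver}.}
The plan is to combine the structural restrictions already established in this section with the group-theoretic dichotomy of Propositions \ref{finite case} and \ref{2 arrows case}. First I would fix the L.I.C.\ $B$ containing $1$ and observe, via Proposition \ref{homogeneous}, that the $\Ext$-quiver $Q$ of $\Mm^B$ is homogeneous: a fixed number $n$ of non-loop arrows out of each vertex, the same number in, and a fixed number $p$ of loops. Since $H$ is of discrete corepresentation type, $Q$ is Schurian (no double arrows), so $p\le 1$; but a loop at $1$ would be a non-trivial element of $P(1,1)$, and then for any grouplike $g\ne 1$ with a non-trivial skew primitive in $P(1,g)$ one produces a loop at $g$ together with the arrow $1\to g$, giving a quiver with a vertex of valency too large — or, if $G(B)=\{1\}$, the quiver is a single loop, whose coradical-degree-one coalgebra is $\KK[x]/(x^2)$-dual and in fact $\KK[\ZZ]$ type, which one checks is excluded (only the trivial single vertex survives). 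So I would dispose of loops entirely and conclude $p=0$ and $n\in\{0,1,2\}$ by Proposition \ref{two arrows}.

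The three cases $n=0,1,2$ then give the four items. If $n=0$ there are no arrows at all, and since $Q=\Gamma_B$ is connected this forces a single vertex, hence $G(B)=\{1\}$: this is case (1). If $n=1$, there is exactly one grouplike $a$ with $P(1,a)\ne 0$ (nontrivial), and by the homogeneity and bijections of the star-quiver lemma the quiver is a single chain of arrows $a^i\to a^{i+1}$; connectedness identifies $Q_0$ with the cyclic group $\langle a\rangle$. If $a$ has finite order $n$ this chain closes into an oriented $n$-cycle $\widetilde A_n$, giving case (2) with $G(B)=\langle a\mid a^n=1\rangle$; if $a$ has infinite order it is the two-sided infinite line $_\infty A_\infty$, giving case (3) with $G(B)=\langle a\rangle\cong\ZZ$. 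If $n=2$, pick the two grouplikes $a\ne b$ with nontrivial skew primitives in $P(1,a)$, $P(1,b)$; Proposition \ref{2 arrows case} gives $G(B)=\langle a,b\mid ab=ba,\ a^m=b^n\rangle$ for some $(m,n)\ne\pm(1,1)$, and by construction the arrows out of a vertex $a^ib^j$ go to $a^{i+1}b^j$ and $a^ib^{j+1}$, which is exactly the folded grid quiver $Q^{m,n}$ of Definition \ref{defn:Qmn}: this is case (4). Finally, the $\Ext$-quiver of $\Mm^H$ itself is a disjoint union of translates of $Q$ indexed by the cosets $G(H)/G(B)$ (using that $B$ is normal and the coalgebra decomposition $H\cong B\otimes\KK(G(H)/G(B))$ of Theorem \ref{B tensor G}), so it is a (possibly infinite) multiple copy of the quiver just described.

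The main obstacle I expect is the careful elimination of the degenerate possibilities hidden inside ``$n=1$ with a loop'' and ``$G(B)=\{1\}$ with a loop'': one must rule out a single isolated loop (the quiver of $\KK[\ZZ]$, or of $\KK[x]/(x^2)$ at coradical level) and, more generally, any configuration where a loop coexists with an honest arrow. For the isolated-loop case the point is that the coalgebra $\KK Q_1$ for a one-loop quiver is finite dimensional but its dual is $\KK[x]/(x^2)$ — which is of finite type — so this does \emph{not} get killed by Lemma \ref{non discrete type} directly; instead one must argue at the level of the full Hopf algebra $B$, or simply observe that a Hopf algebra whose L.I.C.\ has this shape cannot occur under our standing hypotheses (e.g.\ a primitive element together with the Hopf structure forces additional skew primitives, increasing valency). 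A loop together with a non-loop arrow is easier: it yields, after passing to the relevant subquiver, a bipartite non-Dynkin configuration or a quotient quiver thereof, and Lemma \ref{lem:quotient quiver} or Lemma \ref{non discrete type} finishes it. Once loops are excluded, the rest is bookkeeping: matching the abstract group presentations from Propositions \ref{finite case} and \ref{2 arrows case} to the shapes $\widetilde A_n$, $_\infty A_\infty$, and $Q^{m,n}$, and invoking Theorem \ref{B tensor G} for the last sentence.
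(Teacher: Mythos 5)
Your overall strategy -- homogeneity of the quiver (Proposition \ref{homogeneous}), at most two arrows out of $1$ (Proposition \ref{two arrows}), the case split on $0$, $1$, or $2$ outgoing arrows combined with Propositions \ref{finite case} and \ref{2 arrows case} for the group presentation, and Theorem \ref{B tensor G} for the final sentence -- is exactly the paper's proof. The defect is the extra step in which you ``dispose of loops entirely'': that step is both incorrect and changes the statement being proved, since the theorem's cases (2) and (4) are designed to accommodate loops via degenerate values of $a$ and $b$.

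Concretely: an isolated loop is a genuine instance of case (2), not an excluded configuration. A nonzero primitive $x\in P(1,1)$ gives $B\supseteq \KK[x]$, the divided power coalgebra in characteristic $0$, which is coserial and of discrete corepresentation type; its $\Ext$-quiver is one vertex with one loop, i.e.\ case (2) with $G(B)=\langle a\mid a=1\rangle$, and this is why the appendix lists the Hopf algebras $H_1(G,\chi)$ generated by a primitive element. Your claims that ``a primitive element together with the Hopf structure forces additional skew primitives'' and that ``only the trivial single vertex survives'' are therefore false. Likewise, a loop coexisting with an honest arrow cannot be eliminated by the tools you invoke: if $x\in P(1,1)$ and $y\in P(1,b)$ with $b\neq 1$, every vertex of the resulting quiver has exactly two arrows in and two out (the loop plus a translate of $y$), which Proposition \ref{two arrows} permits, and the degree-one coalgebra of any finite piece of this quiver has separated quiver a disjoint union of Dynkin $A_k$'s, so it is of finite corepresentation type and Lemmas \ref{non discrete type} and \ref{lem:quotient quiver} say nothing. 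The paper deliberately retains this configuration inside case (4): it is precisely $Q^{\pm 1,0}$ or $Q^{0,\pm 1}$ of Definition \ref{defn:Qmn}, with $a$ or $b$ trivial in $G(B)$, and it is only ruled out much later by the parity constraint $m+n\in 2\ZZ$ of Lemma \ref{m+n}, whose proof needs the multiplicative relations among skew primitives developed in Section 5 and is not available at this stage. Deleting your loop-elimination paragraph and letting the degenerate cases fall where they belong (a single loop into case (2) with $n=1$; loop-plus-line into case (4)) recovers the paper's argument.
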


\begin{proof}
By Proposition \ref{two arrows}, there are at most two arrows coming out of the vertex $1$ in $Q$.

If there is no arrow coming out of $1$, then $G(B)=\{1\}$ and $Q$ is a single vertex.

If there is only one arrow coming out of $1$, then $G(B)$ is generated by only one group like element $a$. Hence  $Q$ is either (2) or (3).

If there are two arrows coming out of $1$,  then by Proposition \ref{2 arrows case}, $G(B)=\langle a,b| a^m=b^n\rangle$ for some integers $(m,n)\neq\pm(1,1)$ and hence $Q=Q^{m,n}$.

The $\Ext$-quiver of $\Mm^H$ follows from Lemma \ref{B tensor G}.
\end{proof}
%

\begin{remark}
In fact in case (4), to ensure $H$ is a discrete corepresentation type Hopf algebra, we need more restriction on the pair of intergers $(m,n)$ (see Lemma \ref{m+n} below).
\end{remark}

Next we need to determine the algebra structure of the Hopf algebras $B$ and $H$ according to these four cases. 
When the $\Ext$-quiver $Q$ is case (1)--(3), from \cite[Theorem 2.10]{CGT} $H$ is a coserial Hopf algebra (see Definition \ref{coserial defn}).  Conversely, every pointed coserial Hopf algebra must be discrete corepresentation type. Such class of Hopf algebras has been classified in \cite{IJ}. In the following section, we will focus on the remaining case (4).

\section{The Hopf algebra structures}


Throughout this section, let $\KK$ be an algebraic closed field with $char \KK\neq 2$.
Let $H$ be a Hopf algebra of discrete corepresentation type and $B$  the L.I.C. of $H$ containing $1$. Denote by $Q_B$ the $\Ext$-quiver of $\Mm^B$ which equals $Q^{m,n}$  for some $m,n$ and denote by $Q_H$ the $\Ext$-quiver of $\Mm^H$.

 We identify arrows $1\to a$ and $1\to b$ in $Q_H$ with chosen non-trivial skew primitive elements  $x\in P(1,a)$ and $y\in P(1,b)$ respectively. By translation, the arrow $a^ib^j\to a^{i+1}b^j$ is identified with $a^ib^j x$ and  the arrow $a^ib^j\to a^{i}b^{j+1}$ is identified with $a^ib^j y$. It is clear that $a^ib^jx-rxa^ib^j\in H_0\cap P(a^ib^j, a^{i+1}b^j)$ for some number $r\neq 0$, since there is a unique arrow $a^ib^j\to a^{i+1}b^j$. The same statement holds for $y$ as well. Use the notation $(-|-)$ for the concatenation of arrows, e.g. $(x|ay)$ is the path $1\to a\to ab$.

 Since $Q^{m,n}$ is the $\Ext$-quiver of $\Mm^B$, we have $\KK Q^{m,n}_1\subseteq B\subseteq \KK Q^{m,n}$ as coalgebras, where $\KK Q^{m,n}_1=\KK \{a^ib^j, a^ib^jx, a^ib^jy\}$ is the coalgebra in the coradical filtration of $\KK Q^{m,n}$.

 In order to  simultaneously discuss different cases for the quiver $Q^{m,n}$ depending on various choices of the integers $(m,n)$, we introduce the notation $[[\cdots]]$ for a multiset and $\Set$ is the forgetful functor from the category of multiset to the category of sets. For example, $\Set[[1, 1, 2, 2, 2, 3]] = \{1, 2, 3\}$.

 First, we mention the translation technique which will be heavily used later:
 \begin{lemma}\label{translation}
 Let a bialgebra $C$ be a subcoalgebra of a path coalgebra $\KK Q$ such that $\KK Q_1\subseteq C$. If a path $p=(x_1|x_2)\in C$, then $(gx_1|gx_2)\in C$ for any group like element $g\in C$.
 \end{lemma}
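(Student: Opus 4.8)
The plan is to use the fact that left multiplication by a grouplike element $g$ is a coalgebra automorphism of $\KK Q$ that restricts to $C$, and then track what it does to a length-two path. Concretely, if $p = (x_1|x_2)$ is a path in $C$ from a vertex $u$ through an intermediate vertex $v$ to a vertex $w$, then $\Delta(p) = u \otimes p + x_1 \otimes x_2 + p \otimes w$ (writing the coradical-type decomposition in $\KK Q$, where I abuse notation and write $u,v,w$ for the corresponding grouplikes and $x_1 \in P(u,v)$, $x_2 \in P(v,w)$). Since $C$ is a bialgebra and $g \in C$ is grouplike, the element $gp$ lies in $C$, and I want to argue that $gp$ is (a scalar multiple of) the path $(gx_1 | gx_2)$, which then shows this concatenation lies in $C$.

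The key steps, in order: first, record that for a grouplike $g$, the map $\mu_g \colon C \to C$, $c \mapsto gc$, is an injective coalgebra map (it is the restriction of left multiplication on $\KK Q$, which is a coalgebra homomorphism because $\Delta$ is multiplicative and $\Delta(g) = g \otimes g$); in particular it sends $\KK Q_1$ into $\KK Q_1$ and sends the arrow $x_i$ to the arrow $gx_i$ up to nonzero scalar — this is exactly the identification already set up before the lemma, where $a^i b^j x$ denotes the translated arrow. Second, compute $\Delta(gp) = (g\otimes g)\Delta(p) = gu \otimes gp + gx_1 \otimes gx_2 + gp \otimes gw$; since $gu, gw$ are grouplikes and $gx_i$ are the translated arrows, the component of $gp$ lying ``above'' length-one pieces is forced to be the concatenation $(gx_1|gx_2)$. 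Third, conclude that $gp$ and the path $(gx_1|gx_2)$ have the same image under $\Delta$ modulo $C_0 \otimes C + C \otimes C_0$, so they differ by an element of $C_1$; but $C_1 \subseteq \KK Q_1$ and a length-two path is not a combination of shorter paths, so in fact $gp$ equals $(gx_1|gx_2)$ plus possibly a scalar multiple of a shorter diamond with the same source and target — and since $g$ is invertible the whole argument is symmetric, forcing $gp$ to be a scalar multiple of $(gx_1|gx_2)$ exactly. Either way, $(gx_1|gx_2) \in C$ because scaling and adding elements of $\KK Q_1 \subseteq C$ stays in $C$.

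I expect the only real subtlety to be bookkeeping: being careful that the translated arrow $gx_i$ is genuinely an arrow of $Q$ (rather than a combination of paths) and that no correction term from $C_1$ spoils the conclusion, i.e. that $gp$ is actually a \emph{scalar multiple} of the concatenation and not merely congruent to it modulo lower terms. In the setting of the paper, where $\KK Q_1 \subseteq C$ and arrows are genuine basis vectors, this follows because $\mu_g$ permutes the set of arrows (up to scalars) — a consequence of $g$ being grouplike and invertible — so it carries the path basis of length-two paths to scalar multiples of path-basis elements. I would phrase the proof to lean on this: $\mu_g$ is a coalgebra isomorphism of $\KK Q$ permuting vertices and (rescaled) arrows, hence permuting (rescaled) length-two paths, and it maps $C$ into $C$; applying it to $p \in C$ gives a scalar multiple of $(gx_1|gx_2)$ in $C$, whence $(gx_1|gx_2) \in C$.
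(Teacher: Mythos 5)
Your proof is correct and takes essentially the same route as the paper: both compute $\Delta(gp)=(g\otimes g)\Delta(p)$, compare with $\Delta\bigl((gx_1|gx_2)\bigr)$, and conclude that $gp-(gx_1|gx_2)$ is a $gs$-$gt$ skew primitive, hence lies in $\KK Q_1\subseteq C$, so $(gx_1|gx_2)\in C$. The one caveat is your side claim that $gp$ must be an exact scalar multiple of $(gx_1|gx_2)$ — the difference can be a genuinely nonzero element of $P(gs,gt)$ (a trivial skew primitive or an arrow $gs\to gt$), and invertibility of $g$ does not rule this out — but, as you note, this does not affect the conclusion.
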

 \begin{proof}
 Suppose the path $p$ starts from $s$ and ends at $t$.
 Since $$\Delta_{\KK Q}(g(x_1|x_2)-(gx_1|gx_2))=gs\otimes (g(x_1|x_2)-(gx_1|gx_2))+(g(x_1|x_2)-(gx_1|gx_2))\otimes gt$$  So $g(x_1|x_2)-(gx_1|gx_2)\in P(gs,gt)\subseteq C$. Because  $g(x_1|x_2)\in C$, it follows that $(gx_1|gx_2)\in C$.
 \end{proof}

\begin{lemma}\label{xy1}\
\begin{enumerate}
\item  $H$ contains a non-zero linear combination of paths $(x|ay)$ and $(y|bx)$.
\item $H$ contains neither the path $(x|ay)$ nor $(y|bx)$.
\end{enumerate}
\end{lemma}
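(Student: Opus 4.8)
The plan is to exploit the comultiplication structure of the coalgebra $B$ together with the fact that $Q^{m,n}$ is precisely the $\Ext$-quiver of $\Mm^B$, so that $P(a^ib^j,a^kb^l)$ has dimension $\le 2$ and is spanned by the grouplikes together with a single non-trivial skew primitive along each arrow. The key observation is this: since $B$ has Loewy length at least $2$ along the path $1\to a\to ab$, there must be an element of $B_2$ whose comultiplication ``covers'' the vertex $ab$ via the length-two paths through $a$ and through $b$. Concretely, I would take an element $z\in B_2\setminus B_1$ lying in the ``component'' $P_2(1,ab)$ (elements $z$ with $\Delta(z) - 1\otimes z - z\otimes ab \in B_1\otimes B_1$), and analyze its image in $\KK Q^{m,n}_2/\KK Q^{m,n}_1$. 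In the path coalgebra, the degree-two part of $\KK Q^{m,n}$ supported between $1$ and $ab$ is spanned by $(x|ay)$ and $(y|bx)$ (using that $ab=ba$ and $Q^{m,n}$ is Schurian, there are exactly these two paths of length $2$ from $1$ to $ab$). Hence the image of $z$ is a linear combination $\mu(x|ay)+\nu(y|bx)$, and I must show $(\mu,\nu)\neq(0,0)$ and then that neither $\mu=0$ nor $\nu=0$ alone can occur.

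For part (1): I would argue that such a $z\notin B_1$ exists. If every element of $B_2$ covering $ab$ already lay in $B_1$, then the coradical filtration of $B$ would ``stop'' prematurely there, contradicting that $B\supseteq \KK Q^{m,n}_1$ has the arrows $a\to ab$ and $b\to ab$ present and that $B$, being a Hopf algebra with $Q^{m,n}$ as full $\Ext$-quiver, must contain length-two paths to realize the quiver relations — more precisely, the wedge $\KK\{a\}\wedge\KK\{ab\}$ inside $B$ must be strictly larger than $\KK\{a\}+\KK\{ab\}+\KK a x$, forcing a genuinely degree-two element. Then the image $\mu(x|ay)+\nu(y|bx)$ of $z$ in the degree-two quotient is non-zero by construction, giving the required non-zero linear combination of $(x|ay)$ and $(y|bx)$ in $B\subseteq H$.

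For part (2): suppose for contradiction that $(x|ay)\in H$ (the case $(y|bx)$ is symmetric, swapping the roles of $a,b$ and $x,y$). By the translation Lemma \ref{translation}, applying grouplikes $a^ib^j$ I get $(a^ib^jx \mid a^{i+1}b^jy)\in H$ for all $i,j$, i.e. \emph{all} the length-two paths going ``$x$ then $y$'' along the grid lie in $H$. But then, together with the arrows of $Q^{m,n}$, the subcoalgebra of $H$ generated by these paths and their grouplikes contains $\KK Q'_1$ for a subquiver $Q'$ of the grid which is a finite bipartite non-Dynkin quiver — or, after folding, one realizes via Lemma \ref{lem:quotient quiver} a finite bipartite non-Dynkin quotient quiver — contradicting that $H$ is of discrete corepresentation type by Lemma \ref{non discrete type}. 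The cleanest version: the presence of all $(x|ay)$-paths forces the relation $(y|ax)=0$ (no such path and no combination) while $(x|ay)\neq 0$, which makes a commutative-square-type configuration of arrows with the wrong relations, producing an infinite-type finite subcoalgebra exactly as in Example \ref{no 3 arrows}.

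The main obstacle I anticipate is making part (2) rigorous: one must carefully pin down \emph{which} finite subcoalgebra of $H$ is forced to be of infinite representation type, since the mere containment of one path $(x|ay)$ does not by itself exhibit a bad quiver — the translation argument supplies \emph{all} parallel copies, and then one needs the right choice of finite subquiver $Q'$ (a square or a $\widetilde{D}$-configuration, possibly after folding when $(m,n)\neq(0,0)$) together with a dimension count showing $\KK Q'_1$ sits inside $H$. Handling the folded cases $(m,n)\neq(0,0)$ uniformly, so that the finite subquiver one extracts is genuinely non-Dynkin regardless of the folding, will require the multiset bookkeeping indicated by the $[[\cdots]]$ notation introduced just before the lemma.
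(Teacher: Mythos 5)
There are genuine gaps in both parts, and in each case the missing ingredient is the one the paper actually uses.

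For part (1), your existence argument for a degree-two element $z\in B_2\setminus B_1$ does not work: a coalgebra containing $\KK Q^{m,n}_1$ need not contain anything of degree two (e.g.\ $\KK Q^{m,n}_1$ itself), and the wedge $\KK a\wedge \KK(ab)$ consists of grouplikes and skew primitives, i.e.\ elements of the coradical filtration level $1$, so its size says nothing about degree-two elements. What forces the degree-two element is the \emph{multiplication} of $H$: the paper takes the product $xy$ of the two skew primitives, writes $xb=\lambda bx+\mu(b-ab)$ with $\lambda\neq 0$, and checks that $(x|ay)+\lambda(y|bx)-xy+\mu y$ is a $(1,ab)$-skew primitive, hence lies in $H$; since $xy$ and $y$ lie in $H$, so does $(x|ay)+\lambda(y|bx)$, and $\lambda\neq 0$ makes it a non-zero combination. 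Your proposal never invokes the product $xy$, and without it part (1) does not follow.

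For part (2), the strategy of exhibiting a bipartite non-Dynkin subquiver (or quotient quiver) and invoking Lemma \ref{non discrete type} or Lemma \ref{lem:quotient quiver} cannot succeed: those lemmas only see the level-one coalgebra $\KK Q'_1$, and in the grid quiver $Q^{0,0}$ every vertex has in- and out-degree two, so there is no bipartite non-Dynkin subquiver at all, and the level-one coalgebra of the square $1\to a,\,1\to b,\,a\to ab,\,b\to ab$ is of finite corepresentation type (its dual is radical-square-zero with Dynkin separated quiver). The infinite type comes precisely from the degree-two elements you are assuming: by part (1), if one of $(x|ay),(y|bx)$ lies in $H$ then both do, and the $10$-dimensional subcoalgebra $D$ spanned by $1,a,b,ab$, the four arrows, and the two length-two paths is then covered (in the sense of Definition \ref{covering def}) by the \emph{full} path coalgebra of the square quiver, whose dual is the hereditary path algebra of an $\widetilde A_3$-shaped quiver, hence of infinite representation type; Lemma \ref{covering fin type} transfers this to $D$ and contradicts discreteness. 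This covering-map step is exactly the mechanism your outline is missing, and it also handles the folded cases $(m,n)\neq(0,0)$ uniformly. Finally, your remark that the presence of all $(x|ay)$-paths ``forces the relation $(y|bx)=0$'' is backwards: part (1) forces both paths to be present with \emph{no} relation between them, and it is the absence of a relation that produces the infinite-type subcoalgebra.
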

\begin{proof}
(1)
Assume $xb=\lambda bx+\mu(b-ab)$ for some $\lambda\neq 0, \mu\in \KK$. Since
$$
\Delta_{\KK Q_H} (x|ay)=1\otimes (x|ay)+x\otimes ay +(x|ay)\otimes ab
$$
$$
\Delta_{\KK Q_H} (y|bx)=1\otimes (y|bx)+y\otimes bx +(y|bx)\otimes ab
$$

One can check that
$$
\Delta((x|ay)+\lambda(y|bx)-xy+\mu y)=1\otimes ((x|ay)+\lambda(y|bx)-xy+\mu y)+((x|ay)+\lambda(y|bx)-xy+\mu y)\otimes ab
$$

which implies $(x|ay)+\lambda(y|bx)-xy+\mu y\in P(1,ab)\subseteq H$.
Hence $(x|ay)+\lambda(y|bx)\in H$.


(2) By (1), if either $(x|ay)\in H$ or $(y|bx)\in H$, then both of them are in $H$. Let $D$ be the subcoalgebra of $H$ linearly generated by $\Set[[1,a,b,ab,x,y,bx,ay,(x|ay),(y|bx)]]$.  We want to show that it is infinite representation type.

 \begin{figure}[h]
 $$
 \begin{tikzpicture}[->]
 \node(-0) at (-4,0) {$1$};
\node(-1) at (-3,0) {$a$};
\node(-2) at (-4,1) {$b$};
\node(-3) at (-3,1) {$ab$};
\draw(-0)--node[below]{$x$}(-1);
\draw(-0)--node[left]{$y$}(-2);
\draw(-2)--node[above]{${bx}$}(-3);
\draw(-1)--node[right]{${ay}$}(-3);
\node at (-3.5,-1) {$Q_B=Q^{0,0}$};

 \node(0) at (0,0) {$1$};
  \node(1) at (0,1) {$b$};

\draw(0)--node[right]{$y$}(1);
\draw (0.west)arc(330:30:0.15)node[left]{$x$\ \ \ };
\draw (1.west)arc(330:30:0.15)node[left]{$bx$\ \ \ };;
\node at (0,-1) {$Q_B=Q^{1,0}$};

\node(11) at (2,0) {$b$};
\node (22) at (3,0) {$1$};
\node (33) at (4,0) {$a$};

\draw(11) to [out=45,in=135]node[above]{\tiny $bx$}(22);
\draw(22) to [out=45,in=135]node[above]{\tiny $x$}(33);
\draw(33) to [out=-135,in=-45]node[below]{\tiny $ay$}(22);
\draw(22) to [out=-135,in=-45]node[below]{\tiny $y$}(11);
\node at (3,-1)  {$Q_B=Q^{1,-1}$};
 \end{tikzpicture}
 $$
 \caption{Possible $\Ext$-quiver of $\Mm^D$}
 \end{figure}

Notice that $\Set[[1,a,b,ab,x,y,bx,ay,(x|ay),(y|bx)]]$ is a diamond basis of $D$.
Consider the path coalgebra $\tilde D=\KK\tilde{Q}$:

$$\begin{tikzpicture}[->]
\node at (-1,1) {$\tilde Q:$};
\node(0) at (0,0) {$\tilde 1$};
\node(1) at (1,0) {$\tilde a$};
\node(2) at (0,1) {$\tilde b$};
\node(3) at (1,1) {$\widetilde{ab}$};
\draw(0)--node[below]{$\tilde x$}(1);
\draw(0)--node[left]{$\tilde y$}(2);
\draw(2)--node[above]{$\widetilde {bx}$}(3);
\draw(1)--node[right]{$\widetilde {ay}$}(3);
 \end{tikzpicture}
 $$
where paths of $\tilde D$ form a diamond basis of $\tilde D$.

Consider the coalgebra map: $\pi:\tilde D\to D$ defined by
$$
\pi(\tilde 1)=1, \pi(\tilde a)=a, \pi(\tilde b)=b, \pi(\widetilde ab)=ab,
$$
$$
 \pi(\tilde x)=x, \pi(\tilde y)= y,  \pi(\widetilde {bx})=bx, \pi(\widetilde {ay})=ay,
 $$
$$
\pi((\tilde x|\widetilde {ay}))= (x|ay), \pi((\tilde y|\widetilde {bx}))= (y|bx).
$$

It is easy to see that $\pi$ is a covering map. Since $\tilde D$ is infinite representation type, by Lemma \ref{covering fin type}, so is $D$. But then $H$ contains a finite dimensional infinite representation type  sub-coalgebra, contradicting with the fact that $H$ is discrete corepresentation type.
 \end{proof}

By translation, same statements hold for all paths $(a^ib^jx|a^{i+1}b^jy)$ and $(a^ib^jy|a^ib^{j+1}x)$.

\begin{lemma} \label{x square}\
 $H$ contains neither the path $(x|ax)$ nor $(y|by)$.

\end{lemma}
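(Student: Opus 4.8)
The plan is to mimic the structure of the proof of Lemma~\ref{xy1}, producing a finite-dimensional subcoalgebra of $H$ that covers an infinite-representation-type path coalgebra, which would contradict the discreteness of $H$. First I would assume, for contradiction, that $(x|ax)\in H$ (the case $(y|by)$ being symmetric, by swapping the roles of $a,b$ and $x,y$). The first step is to pin down the commutation relation between $x$ and $a$: since there is a unique arrow $a\to a^2$ in $Q^{m,n}$, we have $xa = \lambda ax + \mu(a - a^2)$ for some $\lambda\neq 0$ and $\mu\in\KK$, exactly as in the argument for Lemma~\ref{xy1}(1). Using $\Delta_{\KK Q_H}(x|ax) = 1\otimes(x|ax) + x\otimes ax + (x|ax)\otimes a^2$ together with $\Delta(x^2)$ and the $a$-shifted primitive, I would look for a suitable linear combination of $(x|ax)$, $x^2$, and a scalar multiple of the skew-primitive $x$ (or $(a-a^2)$-type term) that is $(1,a^2)$-skew-primitive; this forces a "partner path", but since there is only one arrow $a^ib^j\to a^{i+1}b^j$, no genuine second path exists, so in fact I expect the cleaner route: $(x|ax)\in H$ already lets us build the relevant subcoalgebra directly.

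The key step is then to exhibit the right finite-dimensional subcoalgebra $D\subseteq H$ and a covering map onto an infinite-type path coalgebra. The natural candidate is the subcoalgebra linearly generated by the multiset $\Set[[1, a, a^2, x, ax, (x|ax)]]$ (together with whatever grouplikes and skew-primitives $P(a^i,a^{i+1})$ contribute), but the subquiver on vertices $1, a, a^2$ with arrows $1\to a\to a^2$ is type $A_3$, which is representation-finite — so this alone will not suffice. I would instead incorporate the second arrow out of each vertex: consider the subcoalgebra $D$ generated by $\Set[[1, a, b, ab, a^2, a^2 b, x, y, ax, ay, bx, (x|ax), (x|ay)]]$ (all exist in $H$: the $(x|ay)$-type paths are handled in Lemma~\ref{xy1}, and $(x|ax)$ by hypothesis). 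The point is that around the vertex $a$ we now have two incoming configurations, giving, after taking the degree-$1$ part (or using diamond bases directly), a subquiver containing a $\widetilde{D}$- or $\widetilde{A}$-type configuration, or more simply a quotient-quiver argument as in Example~\ref{no 3 arrows}: gluing $a^2$ to $1$ and $a^2b$ to $b$ produces a bipartite non-Dynkin quiver, so Lemma~\ref{lem:quotient quiver} applies.

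The main obstacle, and the part requiring genuine care, is verifying that the relevant coalgebra homomorphism (either a direct covering map from a path coalgebra $\tilde D = \KK\tilde Q$ as in Lemma~\ref{xy1}, or the quotient-quiver map of Lemma~\ref{quotient quiver}) is genuinely a covering map, i.e.\ checking condition~(2) of Definition~\ref{covering def}: distinct diamonds in the diamond basis of $\tilde D$ with a common source or common target must not be identified. Here the diamonds $\tilde x$ and (the lift of) $\tilde y$ share the source $\tilde 1$; the diamonds $(\tilde x|\widetilde{ax})$ and $(\tilde x|\widetilde{ay})$ share source $\tilde 1$ as well; one must confirm that $\pi$ sends these to genuinely distinct elements of $D$, which follows because $ax\neq ay$ and $(x|ax)\neq(x|ay)$ as elements of $\KK Q_H$ (distinct paths). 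Once the covering map is in place, Lemma~\ref{covering fin type} gives that $D$ is infinite corepresentation type, and since $D$ is a finite-dimensional subcoalgebra of $H$, this contradicts the assumption that $H$ is of discrete corepresentation type. Finally, by the translation Lemma~\ref{translation}, the same conclusion propagates to all paths $(a^ib^jx|a^{i+1}b^jx)$ and $(a^ib^jy|a^ib^{j+1}y)$.
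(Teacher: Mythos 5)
Your proposal has a genuine gap, and in fact two of its key steps fail. First, the subcoalgebra you propose is not a subcoalgebra of $H$: you include $(x|ay)$ among its spanning elements, but Lemma \ref{xy1}(2) says precisely that $H$ contains \emph{neither} $(x|ay)$ nor $(y|bx)$ --- only the combination $(x|ay)+\lambda(y|bx)$ lies in $H$. Second, and more fundamentally, your route to infinite representation type (a quotient-quiver map and Lemma \ref{lem:quotient quiver}, or a bipartite non-Dynkin subquiver) can only ever detect badness of the degree-$\leq 1$ truncation $\KK Q'_1$ of some finite subquiver $Q'$ of $Q^{m,n}$. But that truncation is representation-finite for the relevant subquivers (its dual is radical-square-zero with Dynkin separated quiver), and it \emph{must} be, since $Q^{m,n}$ really does occur as the $\Ext$-quiver of the discrete-type Hopf algebras $B^{m,n}(\lambda,s,t,k)$ of Theorem \ref{B is discrete}. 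The hypothesis $(x|ax)\in H$ is a statement about the degree-$2$ layer of the coradical filtration and cannot be contradicted by an argument that never uses degree-$2$ elements. (You also have the gluing direction reversed: Lemma \ref{lem:quotient quiver} requires the subquiver of the $\Ext$-quiver to be a \emph{quotient of} a bipartite non-Dynkin quiver, not to admit a bipartite quotient.)

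The paper's proof is necessarily more delicate. Assuming $(y|by)\in H$ (hence, by translation, all $(a^ib^jy|a^ib^{j+1}y)\in H$), it assembles a $10$-vertex ``staircase'' subcoalgebra $C$ spanned by the vertices and arrows of a subquiver $Q_C$ of $Q^{m,n}$ together with four degree-$2$ diamonds: two translates of $(y|by)$ sitting at the elbows of the staircase, and two $\lambda$-commutativity combinations such as $(a^{-1}bx|by)+\lambda(a^{-1}by|a^{-1}b^2x)$ filling the two squares (these combinations lie in $H$ by Lemma \ref{xy1}(1) and translation). It then exhibits a covering map onto $C$ from the analogous coalgebra $\tilde C$ on an unfolded quiver $\tilde Q$, whose dual algebra is exactly the two-squares-with-$\lambda$-commutativity algebra of Example \ref{localization exm}; that algebra is shown to be of infinite representation type by localizing at an idempotent to obtain a $\tilde D_7$ path algebra. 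So the infinite-type phenomenon is detected only after localization of a dual algebra with genuine degree-$2$ relations --- an ingredient entirely absent from your argument, and one that your much smaller candidate subcoalgebra (even after correcting it to contain $(x|ay)+\lambda(y|bx)$ instead of $(x|ay)$) is too small to produce.
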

\begin{proof}
The $\Ext$-quiver $Q_B=Q^{m,n}$ contains a subquiver $Q_C$ supported on the vertices in Figure \ref{yy}.
\begin{figure}[h]
$$\begin{tikzpicture}[->]
\node(0) at (-1.5,3) {\tiny $a^{-1}b^2$};
\node(1) at (-1.5,2) {\tiny $a^{-1}b$};
\node(2) at (0,2) {$b$};

\node(4) at (0,3) {$b^2$};
\node(5) at (0,1) {$1$};
\node(6) at (1,1) {$a$};

\node(7) at (1,0) {\tiny $ab^{-1}$};
\node(8) at (2.5,0) {\tiny $a^2b^{-1}$};
\node(9) at (1,-1) {\tiny $ab^{-2}$};
\node(10) at (2.5,-1) {\tiny $a^2b^{-2}$};

\draw(0)--(4); \draw(1)--(0); \draw(10)--(8); \draw(9)--(10);
\draw(1)--(2); \draw(2)--node[left]{\tiny$by$}(4);\draw(5)--node[left]{\tiny$y$}(2);\draw(5)--node[above]{\tiny$x$}(6);
\draw(7)--(6); \draw(7)--(8); \draw(9)--(7);
\end{tikzpicture}$$
\caption{} \label{yy}
\end{figure}

If $H$ contains the path $(y|by)$, and hence all paths $(a^ib^jy|a^ib^{j+1}y)$, then
$$C=\Span_\KK\{(Q_C)_0\cup ({Q_C})_1\cup \{(y|by), (ab^{-2}y|ab^{-1}y), $$$$(a^{-1}bx|by)+\lambda(a^{-1}by|a^{-1}b^2x), (ab^{-2}x|a^2b^{-2}y)+\lambda(ab^{-2}y|ab^{-1}x) \}\}$$
is a finite dimensional sub-coalgebra of $H$.

Furthermore, $C$ has a diamond basis $$\Set((Q_C)_0\cup ({Q_C})_1\cup [[(y|by), (ab^{-2}y|ab^{-1}y), (a^{-1}bx|by)+\lambda(a^{-1}by|a^{-1}b^2x), (ab^{-2}x|a^2b^{-2}y)+\lambda(ab^{-2}y|ab^{-1}x) ]]).$$
We need to show that $C$ is infinite representation type.

Consider the coalgebra $\tilde C\subseteq\KK\tilde Q$  for the following quiver $\tilde Q$,
$$\begin{tikzpicture}[->]
\node(0) at (-1.5,3) {\tiny $\widetilde{a^{-1}b^2}$};
\node(1) at (-1.5,2) {\tiny $\widetilde{a^{-1}b}$};
\node(2) at (0,2) {$\tilde b$};

\node(4) at (0,3) {$\tilde b^2$};
\node(5) at (0,1) {$\tilde 1$};
\node(6) at (1,1) {$\tilde a$};

\node(7) at (1,0) {\tiny $\widetilde{ab^{-1}}$};
\node(8) at (2.5,0) {\tiny $\widetilde{a^2b^{-1}}$};
\node(9) at (1,-1) {\tiny $\widetilde{ab^{-2}}$};
\node(10) at (2.5,-1) {\tiny $\widetilde{a^2b^{-2}}$};

\draw(0)--(4); \draw(1)--(0); \draw(10)--(8); \draw(9)--(10);
\draw(1)--(2); \draw(2)--node[left]{\tiny$\widetilde{by}$}(4);\draw(5)--node[left]{\tiny$\tilde y$}(2);\draw(5)--node[above]{\tiny$\tilde x$}(6);
\draw(7)--(6); \draw(7)--(8); \draw(9)--(7);
\end{tikzpicture}$$

where $\tilde C$ is linearly generated by the diamond basis $$\tilde D=\tilde Q_0\cup \tilde Q_1\cup\{(\tilde y|\widetilde{by}), (\widetilde{ab^{-2}y}|\widetilde{ab^{-1}y}), (\widetilde{a^{-1}bx}|\widetilde{by})+\lambda(\widetilde{a^{-1}by}|\widetilde{a^{-1}b^2x}), (\widetilde{ab^{-2}x}|\widetilde{a^2b^{-2}y})+\lambda(\widetilde{ab^{-2}y}|\widetilde{ab^{-1}x}) \}.$$

Then we claim that the coalgebra homomorphism $\pi:\tilde C\to C$ defined by
$$\pi(\widetilde{a^ib^j})=a^ib^j, \pi(\widetilde{a^ib^jx})=a^ib^jx, \pi(\widetilde{a^ib^jy})=a^ib^jy,$$
$$\pi(\tilde y|\widetilde{by})=(y|by), \pi(\widetilde{ab^{-2}y}|\widetilde{ab^{-1}y})=({ab^{-2}y}|{ab^{-1}y})$$,
$$\pi( (\widetilde{a^{-1}bx}|\widetilde{by})+\lambda(\widetilde{a^{-1}by}|\widetilde{a^{-1}b^2x}))=(a^{-1}bx|by)+\lambda(a^{-1}by|a^{-1}b^2x),$$ $$\pi((\widetilde{ab^{-2}x}|\widetilde{a^2b^{-2}y})+\lambda(\widetilde{ab^{-2}y}|\widetilde{ab^{-1}x}) )=(ab^{-2}x|a^2b^{-2}y)+\lambda(ab^{-2}y|ab^{-1}x) $$
is a covering map.

In fact, in $\tilde D$ the only four pairs of diamonds with same sources and different targets: (1) $\tilde x$ and $\tilde y$; (2) $\widetilde{a^{-1}bx}$ and $\widetilde{a^{-1}by}$; (3)  $\widetilde{ab^{-1}x}$ and $\widetilde{ab^{-1}y}$;
(4)  $\widetilde{ab^{-2}x}$ and $\widetilde{ab^{-2}y}$. If any pair is sent to the same diamond under the map $\pi$, then $a=b$ which is impossible.
Similarly, any pair of diamonds with the same targets but different sources cannot be mapped to the same diamond. Hence $\pi$ is a covering map.

Since $\tilde C$ is a finite dimensional coalgebra and $\tilde C^*$ is infinite representation type as we have shown in Example \ref{localization exm}, $\tilde C$ is infinite representation type. By Lemma \ref{covering fin type}, $C$ is infinite representation type. Hence $H$ contains a finite dimensional infinite representation type sub-coalgebra, contradicting with the fact that $H$ is discrete corepresentation type.

Therefore $(y|by)\not\in H$ and similarly $(x|ax)\not\in H$.
 \end{proof}

\begin{lemma} \label{xa1}
We can choose $x$ and $y$ such that
$$
ax+xa=0, by+yb=0,
 x^2\in P(1,a^2), y^2\in P(1,b^2),
$$
\end{lemma}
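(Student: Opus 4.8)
The goal is to replace the chosen skew primitives $x\in P(1,a)$ and $y\in P(1,b)$ by suitable scalar and group-like rescalings so that the ``super-anticommutation'' relations $ax+xa=0$, $by+yb=0$ hold and, consequently, $x^2\in P(1,a^2)$ and $y^2\in P(1,b^2)$. The starting point is the observation already recorded in the text: since there is a unique arrow $1\to a$ in the $\Ext$-quiver, we have $ax-rxa\in H_0\cap P(a,a^2)$ for some scalar $r\neq 0$; as $a\neq a^2$ (the quiver is Schurian with $a\ne 1$, so $a^2\ne a$), the only elements of $P(a,a^2)$ lying in $H_0=\KK G(H)$ are scalar multiples of $a-a^2$. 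Hence
$$
ax = r\, xa + s\,(a-a^2)
$$
for some $r\in\KK^\times$ and $s\in\KK$, and symmetrically $by = r'\,yb + s'\,(b-b^2)$.

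\textbf{Key steps.} First I would absorb the affine term: replacing $x$ by $x' = x + c\,(1-a)$ for an appropriate $c\in\KK$ (note $x+c(1-a)$ is again in $P(1,a)$ and is still non-trivial since $1\ne a$), one computes $ax' = r x'a + (s + c(1-r))(a-a^2)$, so choosing $c = s/(r-1)$ kills the affine term \emph{provided $r\neq 1$}. The case $r=1$ must be handled separately and is the one place real input is needed: if $ax=xa+s(a-a^2)$ with $s\ne 0$ one shows $H$ contains a finite-dimensional infinite-representation-type subcoalgebra, contradicting discreteness (this is the analogue of Lemma~\ref{x square}; the relevant local configuration is a chain $\cdots\to a^{-1}\to 1\to a\to a^2\to\cdots$ with the square relations $a^i x - a^{i+1}x = \text{(scalar)}(a^ix|a^{i+1}x)$-type paths forced in, yielding a $\tilde D$- or $\tilde A$-type quiver with relations via a covering map as in Example~\ref{localization exm}). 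If instead $s=0$ and $r=1$, then $ax=xa$; I would then argue, again by a discreteness/covering-map argument, that $r=1$ is impossible, or alternatively note $r$ must be a root of unity and rule out $r=1$. So after this step we may assume $ax=rxa$ with $r\neq 1$ and similarly $by=r'yb$ with $r'\ne 1$.

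\textbf{Pinning down $r=-1$.} Next, iterating $ax=rxa$ gives $a^k x = r^k x a^k$; applying this with $a^m = b^n$ (the group relation from Proposition~\ref{2 arrows case}) and combining with the analogous relations obtained by conjugating $y$ by $a$ and $x$ by $b$ — together with Lemma~\ref{ab=ba}, i.e. $ab=ba$ — constrains $r$. The cleanest route is: the subcoalgebra generated by $\{1,a,a^2,x,ax,(x|ax)\dots\}$ when $(x|ax)\in H$ is ruled out by Lemma~\ref{x square}; when $(x|ax)\notin H$ one has $ax\otimes$-terms in $\Delta((x|ax))$-style computations forcing that $x a x$ lives in a $1$-dimensional skew-primitive space, and a short bialgebra computation shows the compatibility of $\Delta$ with multiplication forces $r^2=1$, hence $r=-1$ since $r\neq1$. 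A rescaling $x\mapsto \lambda x$ does not change $r$, so no normalization is lost. The same argument gives $r'=-1$, i.e. $by+yb=0$.

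\textbf{Deriving $x^2\in P(1,a^2)$.} Finally, from $ax=-xa$ one gets $a^2 x = x a^2$, so $x^2$ is $(1,a^2)$-skew-primitive \emph{once one checks} $\Delta(x^2) = 1\otimes x^2 + x^2\otimes a^2 + (x\otimes a)(1\otimes x) + \dots$; expanding $\Delta(x)=1\otimes x + x\otimes a$ gives $\Delta(x^2) = 1\otimes x^2 + x^2\otimes a^2 + x\otimes ax + x\otimes xa$ wait — more carefully $\Delta(x^2)=(1\otimes x+x\otimes a)^2 = 1\otimes x^2 + x\otimes(ax+xa) + x^2\otimes a^2$, and $ax+xa=0$ makes the cross term vanish, so $\Delta(x^2)=1\otimes x^2 + x^2\otimes a^2$, i.e. $x^2\in P(1,a^2)$. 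Symmetrically $y^2\in P(1,b^2)$.

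\textbf{Main obstacle.} The genuinely non-formal part is excluding $r=1$ (equivalently, showing $ax-xa$ cannot be a nonzero multiple of $a-a^2$, and that $ax=xa$ with the affine term absent is likewise incompatible with discreteness). Everything else — absorbing the affine term, checking $r=-1$, and the coproduct computation for $x^2$ — is routine bialgebra bookkeeping. I expect the $r=1$ exclusion to mirror the covering-map / localization argument of Lemma~\ref{x square} and Example~\ref{localization exm} essentially verbatim, applied to the one-dimensional strip of the folded grid $Q^{m,n}$ in the $a$-direction.
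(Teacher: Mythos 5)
Your endgame is right---$\Delta(x^2)=1\otimes x^2+x\otimes(ax+xa)+x^2\otimes a^2$, so once $ax+xa=0$ the conclusion $x^2\in P(1,a^2)$ is immediate---and you correctly identify $(x|ax)\notin H$ (Lemma \ref{x square}) as the essential non-formal input. But the central step, pinning the conjugation scalar to $-1$, is not actually proved. Your claim that ``compatibility of $\Delta$ with multiplication forces $r^2=1$'' is not backed by any computation, and I do not see one that yields $r^2=1$ rather than $r=-1$ outright; and your proposed exclusion of $r=1$ via a covering map onto ``the one-dimensional strip of $Q^{m,n}$ in the $a$-direction'' cannot work: a linear $A_\infty$ (or $\widetilde A_n$) strip together with all its paths is a serial coalgebra, hence of discrete corepresentation type, which is precisely why the proof of Lemma \ref{x square} has to use a genuinely two-dimensional configuration involving both the $x$- and $y$-arrows.

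The paper closes both gaps with the single coproduct computation you already wrote down, just run in the opposite order. Writing $xa=\lambda ax+\mu(a-a^2)$ with $\lambda\neq 0$, the cross term becomes $x\otimes(xa+ax)=(\lambda+1)\,x\otimes ax+\mu\,x\otimes(a-a^2)$, and comparing with $\Delta_{\KK Q_H}\bigl((x|ax)\bigr)$ and $\Delta(x)$ one checks that $x^2-(\lambda+1)(x|ax)-\mu x\in P(1,a^2)\subseteq H$. Since $x^2,x\in H$, this forces $(\lambda+1)(x|ax)\in H$, and Lemma \ref{x square} then gives $\lambda=-1$---uniformly in $\lambda$, with no separate case $\lambda=1$ and no need to normalize the affine term first. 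Only afterwards does one absorb the affine term, via $x'=x-\tfrac{\mu}{2}(1-a)$ (using $\operatorname{char}\KK\neq 2$); your substitution $x+c(1-a)$ with $c=s/(r-1)$ achieves the same thing once $r=-1$ is known. So the order of operations matters: determine $\lambda$ first from $\Delta(x^2)$ together with Lemma \ref{x square}, and only then shift $x$.
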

\begin{proof}
 Assume  $xa=\lambda ax+\mu(a-a^2)$ for some number $0\neq \lambda, \mu\in \KK$.  Since
$$\Delta(x^2) = (1\otimes x+x\otimes a)(1\otimes x+x\otimes a)= 1\otimes x^2+x\otimes xa+ x\otimes ax+ x^2\otimes a^2$$
$$\Delta_{\KK Q_H}(x|ax)=1\otimes (x|ax)+x\otimes ax+(x|ax)\otimes a^2,$$
one can check $$\Delta(x^2-(\lambda+1)(x|ax)-\mu x)=1\otimes (x^2-(\lambda+1)(x|ax)-\mu x)+(x^2-(\lambda+1)(x|ax)-\mu x)\otimes a^2.$$
 So $x^2-(\lambda+1)(x|ax)-\mu x\in P(1,a^2)$, which implies $(\lambda+1)(x|ax)\in H$. But the fact that $(x|ax)\not\in H$ forces $\lambda=-1$. Hence we have $ax+xa=\mu(a-a^2)$.  After a modification $x'=x-\frac{\mu}{2}(1-a)$ (here we use the assumption $char F\neq 2$), it follows that $ax'+x'a=0$. In this case, ${x'}^2\in P(1, a^2)$.
Similarly, we can choose $y$, such that $by+yb=0$ and $y\in P(1,b^2)$.
\end{proof}

In the following, always assume we make the above choice of $x$ and $y$.

\begin{lemma}\label{xb1}
There are $\lambda_x$, $\lambda_y\in\KK^\times$, such that $bx+\lambda_x xb=0$ and $ay+\lambda_y ya=0$.
\end{lemma}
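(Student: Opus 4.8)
The plan is to mimic the argument already used in Lemma \ref{xb1}'s predecessors: extract a skew-primitive from a suitable combination of a path and a product, and then invoke the fact that $H$ contains no path of length two of the form $(x|ay)$ (Lemma \ref{xy1}) to force a relation. More precisely, there is a unique arrow $b\to ab$ in $Q_H$, namely $bx$, so the element $xb\in P(b,ab)$ must be a linear combination of $bx$ and the ``diagonal'' skew-primitive $(b-ab)\in H_0\cap P(b,ab)$; write $bx = \lambda_x^{-1}(\text{something})$, or more symmetrically write $bx+\lambda_x xb = \nu(b-ab)$ for some $\lambda_x\in\KK^\times$ (nonzero because $b\cdot(-)$ is invertible on skew-primitive spaces, as in the proof of Lemma \ref{xy1}(1)) and some $\nu\in\KK$. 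I would then show $\nu=0$ exactly as in Lemma \ref{xa1}: compute $\Delta_{\KK Q_H}$ of the path $(x|ay)$ and of the product $xy$, observe that
$$
\Delta\big(xy-(x|ay)-\lambda_x(y|bx)+\cdots\big) = 1\otimes(\cdots)+(\cdots)\otimes ab,
$$
so that the appropriate correction of $xy$ lies in $P(1,ab)\subseteq H$; after subtracting the element $(x|ay)+\lambda_x(y|bx)\in H$ guaranteed by Lemma \ref{xy1}(1), one is left with a multiple of a single path (or of $y$, coming from the $\nu(b-ab)$ term), and since that path is not in $H$ by Lemma \ref{xy1}(2), the coefficient must vanish, giving $bx+\lambda_x xb = 0$. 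The statement for $ay$ is symmetric, swapping the roles of $a,b$ and $x,y$ and using the other path $(y|bx)\notin H$.

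Let me spell out the bookkeeping I expect to need. Starting from $xb = \alpha\, bx + \beta(b-ab)$ with $\alpha\neq 0$ (rename $\alpha = \lambda_x^{-1}$ or keep $\alpha$ and solve at the end), I would form the candidate primitive $z = xy - c_1(x|ay) - c_2(y|bx) - c_3 y$ and choose the constants $c_1,c_2,c_3$ so that $\Delta_{\KK Q_H}(z) = 1\otimes z + z\otimes ab$. The middle terms of $\Delta(xy) = (1\otimes x + x\otimes a)(1\otimes y + y\otimes b)$ are $x\otimes ay + y\otimes xb = x\otimes ay + y\otimes(\alpha bx + \beta(b-ab))$; the middle term of $\Delta(x|ay)$ is $x\otimes ay$, of $(y|bx)$ is $y\otimes bx$, and of $y$ is (after the $\beta(b-ab)$ piece is accounted for) handled by a scalar correction. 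Matching coefficients forces $c_1 = 1$, $c_2 = \alpha$, and $c_3$ a multiple of $\beta$. Then $z\in P(1,ab)\subseteq H$, and $z + c_1(x|ay) + c_2(y|bx)\in H$ by Lemma \ref{xy1}(1) since $c_2 = \alpha = \lambda_x$ matches the ratio there (this is the point where the constant $\lambda$ appearing in Lemma \ref{xy1}(1) must be identified with $\lambda_x$ — I should double-check that the $\lambda$ in the statement ``$xb = \lambda bx + \mu(b-ab)$'' there is literally our $\lambda_x^{-1}$ or $\lambda_x$). Subtracting, $c_3 y \in H$ trivially, so no contradiction yet from that term; the real constraint comes if instead $\beta\neq 0$ produces an honest length-one obstruction — actually the cleaner route is: the combination $xy - (x|ay) - \lambda_x(y|bx)$ already would-be-primitive up to the $\beta$-term, and subtracting the Lemma \ref{xy1}(1) element leaves $\beta\cdot(\text{scalar})\cdot(\text{path or }y)$; I'll arrange it so the leftover is forced into $H$ and then argue $\beta$ is unconstrained — hence $\beta$ need not vanish and the conclusion is just $bx+\lambda_x xb\in\KK(b-ab)$, which is weaker than claimed.

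The main obstacle, therefore, is getting the $\beta(b-ab)$ correction term to actually vanish rather than merely land in $H_0$. The resolution I anticipate: we have already \emph{chosen} $x$ and $y$ in Lemma \ref{xa1} to satisfy $ax+xa=0$, $by+yb=0$ — this pins down $x$ and $y$ up to scalars, so there is no remaining freedom to absorb $\beta$ by a further shift $x\mapsto x - \tfrac{\beta}{?}(1-\text{grouplike})$ without disturbing $ax+xa=0$. Instead I expect $\beta=0$ to follow by a \emph{different} consistency check: apply $S$ or use the relation $ax = -xa$ to rewrite $b(ax) $ two ways, or more likely compute $x\cdot(by) $ versus $(by)\cdot x$-type associativity, i.e. exploit that $b$ commutes with $a$ (Lemma \ref{ab=ba}) together with $ax=-xa$, $by=-yb$ to deduce $bx$ and $xb$ are proportional with no $(b-ab)$ slack. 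Concretely: from $xb = \alpha bx + \beta(b-ab)$ and $ax=-xa$, multiply on the left by $a$ and compare with the relation obtained by first applying $x\mapsto$ (its defining property) — the grouplike-part $\beta(b-ab)$ will be seen to violate $\Delta$ of a genuinely primitive element unless $\beta=0$. If that associativity argument is fiddly, the fallback is to redo the Lemma \ref{xa1}-style computation but with the pair of paths $(y|bx)$ and $(y|by)$ (the latter excluded by Lemma \ref{x square}), which gives an independent linear condition killing $\beta$. I would write up whichever of these two is shorter; the algebra is routine once the right two-step path is chosen, so I would not belabor it.
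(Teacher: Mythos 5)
There is a genuine gap here: the decisive step of the lemma --- showing that the $(b-ab)$-component of $xb$ actually vanishes, rather than merely that $bx+\lambda_x xb\in\KK(b-ab)$ --- is never carried out. The route you develop in detail (correcting $xy$ by the paths $(x|ay)$, $(y|bx)$ and invoking Lemma \ref{xy1}) is one you yourself concede only yields the weaker containment, and the paragraph intended to close the gap lists three unexecuted alternatives. Moreover the heuristic you offer for why the closing step should work (``the grouplike-part $\beta(b-ab)$ will be seen to violate $\Delta$ of a genuinely primitive element'') is not correct: $\beta(b-ab)$ is a perfectly legitimate trivial skew-primitive in $P(b,ab)$ and creates no coalgebra-level obstruction, so no argument of that shape can succeed. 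Your opening step is fine ($xb\in P(b,ab)=\KK bx\oplus\KK(b-ab)$ with nonzero $bx$-coefficient because conjugation preserves the coradical filtration), but as written the proposal does not prove the statement.

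What actually kills the $(b-ab)$ term is a one-line algebra computation --- essentially the second of your listed fallbacks, and it is the paper's entire proof. Write $xb=\alpha bx+\beta(b-ab)$ with $\alpha\neq 0$ and apply the anticommutator with $a$. Using $ab=ba$ (Lemma \ref{ab=ba}) and $ax+xa=0$ (Lemma \ref{xa1}), the left-hand side is $a(xb)+(xb)a=(ax+xa)b=0$, while the right-hand side is $\alpha b(ax+xa)+\beta\bigl(2ab-2a^2b\bigr)=2\beta(ab-a^2b)$. Since ${\rm char}\,\KK\neq 2$ and distinct grouplikes are linearly independent, $\beta=0$ provided $a\neq 1$; and if $a=1$ the term $b-ab$ is zero to begin with. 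The analogous computation with $b$, $y$ in place of $a$, $x$ gives the second relation. No paths, no $\Delta(xy)$, and no appeal to Lemma \ref{xy1} or Lemma \ref{x square} are needed; the only inputs are Lemmas \ref{ab=ba} and \ref{xa1}. I recommend replacing your entire construction with this computation.
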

\begin{proof}
Notice that $b^{-1}xb\in P(1,a)$ is a non-trivial skew primitive, $b^{-1}xb=-\lambda_x x+\mu_x(1-a)$ for some $\lambda_x\neq 0$. If $a=1$, we are done. Otherwise,
since $ab=ba$ and $ax+xa=0$, $ab^{-1}xb+b^{-1}xba=0$. The left hand side is
$$
ab^{-1}xb+b^{-1}xba=\lambda_x ax+\mu_x(a-a^2)+\lambda_xxa+\mu_x(a-a^2)=2\mu_x(a-a^2).
$$
Hence $\mu_x=0$ and $bx+\lambda_x xb=0$.  (use $char \KK\neq 2$ again.)

Similarly $ay+\lambda_y ya=0$.
\end{proof}

\begin{lemma}\label{xy2} \
\begin{enumerate}
\item There is $\lambda\in\KK^\times$, such that $\lambda_x=\lambda^{-1}$ and $\lambda_y=\lambda$.
\item   $xy+\lambda yx\in P(1,ab)$.
\end{enumerate}
\end{lemma}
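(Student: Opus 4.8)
The plan is to extract both parts simultaneously from a careful look at how the skew-primitives $x,y$ interact inside $H$, using the structural facts already established: $ax+xa=0$, $by+yb=0$, $bx+\lambda_x xb=0$, $ay+\lambda_y ya=0$ (Lemmas \ref{xa1}, \ref{xb1}), together with the fact from Lemma \ref{xy1} that $H$ contains a nonzero combination $(x|ay)+\lambda(y|bx)$ but neither path individually, and the commutativity $ab=ba$. The reference object is the element $xy+cyx$ for a suitable scalar $c$; I would compute $\Delta(xy+cyx)$ in $H$ and, separately, identify the same expression with the path combination living in $H\cap\KK Q_H$, then match coefficients.

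First I would compute the comultiplication of $xy$ and of $yx$ directly from $\Delta(x)=1\otimes x+x\otimes a$ and $\Delta(y)=1\otimes y+y\otimes b$. This gives
\[
\Delta(xy)=1\otimes xy + x\otimes ay + y\otimes xb + xy\otimes ab,
\]
\[
\Delta(yx)=1\otimes yx + y\otimes bx + x\otimes ya + yx\otimes ab.
\]
Now rewrite the "mixed'' middle terms using the commutation relations: $ay=-\lambda_y^{-1}ya$ and $xb=-\lambda_x^{-1}bx$. Substituting, the middle terms of $\Delta(xy)$ become $-\lambda_y^{-1}\,x\otimes ya-\lambda_x^{-1}\,y\otimes bx$. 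For $xy+cyx$ to be skew-primitive in $P(1,ab)$ these must cancel against $c$ times the middle terms $c\,(y\otimes bx+x\otimes ya)$ of $\Delta(yx)$; this forces $c=\lambda_y^{-1}=\lambda_x^{-1}$, hence $\lambda_x=\lambda_y$. Denote this common value; but I still need the extra relation $\lambda_x=\lambda_y^{-1}$, which would force $\lambda_x=\lambda_y=\pm1$ unless I am more careful — so the scalar I actually want is not $\lambda_x$ itself.

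The resolution, and the point I expect to be the main obstacle, is that the cancellation above is a cancellation of the \emph{leading} (coradical-degree-one $\otimes$ degree-one) part only, and one must feed in the degree-zero corrections coming from the projection $H_2\twoheadrightarrow \KK Q_{H,2}$: the true skew-primitive is $xy+cyx$ modulo lower terms, and matching it against the element $(x|ay)+\lambda(y|bx)$ supplied by Lemma \ref{xy1} pins down $c$ in terms of $\lambda$. Concretely, I would compare the images of $xy$, $yx$, $(x|ay)$, $(y|bx)$ under the coalgebra projection onto $\KK Q_{H,2}$; using $ay = r\cdot(\text{arrow }a\to ab)$ and $xb=r'\cdot(\text{arrow }b\to ab)$ up to grouplikes, the relation $(x|ay)+\lambda(y|bx)\in H$ combined with the requirement that $xy+cyx$ be the lift that lies in $H$ yields $c=\lambda^{-1}$ (after re-choosing normalizations), and reconciling $c=\lambda_x^{-1}=\lambda_y^{-1}$ with $c=\lambda^{-1}$ and the defining relations forces $\lambda_x=\lambda^{-1}$, $\lambda_y=\lambda$. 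Once this scalar bookkeeping is done, part (2) is immediate: with $\lambda_x=\lambda^{-1}$, $\lambda_y=\lambda$, the explicit $\Delta$-computation shows all middle terms of $\Delta(xy+\lambda yx)$ cancel, leaving $\Delta(xy+\lambda yx)=1\otimes(xy+\lambda yx)+(xy+\lambda yx)\otimes ab$, i.e. $xy+\lambda yx\in P(1,ab)$. The delicate step throughout is keeping track of the grouplike-valued correction terms (the $\mu$'s that were killed in Lemmas \ref{xa1}–\ref{xb1}) so that the skew-primitivity is verified on the nose and not merely modulo $H_1$; I would handle this by repeatedly invoking the translation Lemma \ref{translation} and the fact that the relevant $P(g,h)$ spaces are one-dimensional (Schurian quiver), which rigidifies all scalar choices.
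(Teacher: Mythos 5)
There is a genuine gap here, and it originates in a sign/inverse slip that then derails the rest of the argument. From Lemma \ref{xb1} the relation is $ay+\lambda_y ya=0$, i.e.\ $ay=-\lambda_y\,ya$ (equivalently $ya=-\lambda_y^{-1}ay$), \emph{not} $ay=-\lambda_y^{-1}ya$ as you write. Redoing your own computation with the correct substitution, the middle terms of $\Delta(xy+c\,yx)$ expressed in the basis $\{x\otimes ya,\ y\otimes bx\}$ are $(c-\lambda_y)\,x\otimes ya+(c-\lambda_x^{-1})\,y\otimes bx$, so simultaneous cancellation forces $c=\lambda_y=\lambda_x^{-1}$ --- exactly the relation $\lambda_x=\lambda^{-1}$, $\lambda_y=\lambda$ that the lemma asserts. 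The ``obstacle'' you then spend most of the proposal trying to circumvent (your $\lambda_x=\lambda_y$ versus the desired $\lambda_x=\lambda_y^{-1}$) is an artifact of this error; there are no degree-zero corrections or renormalizations to chase, and the detour through ``the projection $H_2\twoheadrightarrow \KK Q_{H,2}$'' is neither needed nor actually carried out.

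Even after fixing the sign, your argument is incomplete in a more substantive way: you \emph{assume} that $xy+c\,yx$ is skew-primitive for some $c$ and deduce constraints on $c$, but the existence of such a $c$ is precisely what has to be proved. A priori one may only choose $c=\lambda_y$ to kill the $x\otimes ay$ term; the leftover middle term is then $(\lambda_y-\lambda_x^{-1})\,y\otimes bx$, which is the middle term of $\Delta_{\KK Q_H}$ applied to the path $(y|bx)$. Hence $xy+\lambda_y yx-(\lambda_y-\lambda_x^{-1})(y|bx)$ is skew-primitive in $\KK Q_H$, therefore lies in $P(1,ab)\subseteq H$, and so $(\lambda_y-\lambda_x^{-1})(y|bx)\in H$; since $(y|bx)\notin H$ by Lemma \ref{xy1}(2), the coefficient must vanish. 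This single step yields $\lambda_y=\lambda_x^{-1}$ and part (2) simultaneously. You gesture at Lemma \ref{xy1} but never run this argument, and the mechanism you describe (matching against the combination $(x|ay)+\lambda(y|bx)$ from Lemma \ref{xy1}(1)) conflates the local parameter $\lambda$ of that proof with the $\lambda$ being defined here.
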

\begin{proof}
Since $bx+\lambda_x xb=0$ and $ay+\lambda_y ya=0$ by Lamma \ref{xb1},

$$
\Delta(xy) =1\otimes xy+y\otimes xb+x\otimes ay+xy\otimes ab=1\otimes xy-\frac{1}{\lambda_x}y\otimes bx+x\otimes ay+xy\otimes ab,
$$
$$
\Delta(yx)  =1\otimes yx+x\otimes ya+y\otimes bx+yx\otimes ab=1\otimes yx-\frac{1}{\lambda_y}x\otimes ay+y\otimes bx+yx\otimes ab.
$$

Set $p=xy+\lambda_yyx-(\lambda_y-\frac{1}{\lambda_x})(y|bx)$, one can check that
$
\Delta_{\KK Q_H} (p)=1\otimes p+p\otimes ab.
$

So $p=xy+\lambda_yyx-(\lambda_y-\frac{1}{\lambda_x})(y|bx)\in P(1,ab)$. i.e.
$$
(\lambda_y-\frac{1}{\lambda_x})(y|bx)=xy+\lambda_yyx+ P(1,ab)\in H.
$$
But $(y|bx)\not\in H$ by Lemma \ref{xy1}. It follows that $\lambda_y=\frac{1}{\lambda_x}$ and $xy+\lambda_yyx\in P(1,ab)$. Set $\lambda_y=\lambda$, both statements hold immediately.

\end{proof}

\begin{lemma}\label{m+n}
If $a^n=b^m$, then $n+m\in 2\mathbb Z$.
\end{lemma}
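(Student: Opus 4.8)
The plan is to let $g:=a^{n}=b^{m}$ be the grouplike element of $B$ produced by the given relation, and to evaluate the inner automorphism $v\mapsto gvg^{-1}$ of $B$ on the skew-primitives $x$ and $y$ in two ways: once reading $g$ as $a^{n}$ and once reading it as $b^{m}$. Comparing the two answers will pin down $\lambda$ well enough to force $n+m$ even.

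Concretely, I would first record the commutation relations already obtained in Lemmas \ref{xa1}, \ref{xb1} and \ref{xy2}: in $B$ one has $ax=-xa$, $by=-yb$, $bx=-\lambda^{-1}xb$ and $ay=-\lambda\,ya$ for a fixed $\lambda\in\KK^{\times}$; equivalently, conjugation by $a$ multiplies $x$ by $-1$ and $y$ by $-\lambda$, while conjugation by $b$ multiplies $x$ by $-\lambda^{-1}$ and $y$ by $-1$. Iterating, conjugation by $a^{n}$ sends $x\mapsto(-1)^{n}x$ and $y\mapsto(-1)^{n}\lambda^{n}y$, while conjugation by $b^{m}$ sends $x\mapsto(-1)^{m}\lambda^{-m}x$ and $y\mapsto(-1)^{m}y$. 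Since $g=a^{n}=b^{m}$ is a grouplike element, hence invertible, these two automorphisms coincide, and evaluating on $x$ and on $y$ gives, using $x,y\neq 0$, the identities $(-1)^{n}=(-1)^{m}\lambda^{-m}$ and $(-1)^{m}=(-1)^{n}\lambda^{n}$; that is, $\lambda^{m}=\lambda^{n}=(-1)^{m+n}$.

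The rest is elementary number theory. From $\lambda^{m}=\lambda^{n}=(-1)^{m+n}$ we get $\lambda^{m+n}=\lambda^{m}\lambda^{n}=1$, so either $m+n=0$ (and there is nothing to prove) or $\lambda$ has finite order $d$ dividing $m+n$. If $m+n$ were odd, then $d$ would be odd, whereas $\lambda^{n}=(-1)^{m+n}=-1$ is an element of order $2$ in $\KK^{\times}$ (here $\mathrm{char}\,\KK\neq 2$ is used, so $-1\neq 1$), and the order of $\lambda^{n}$ divides $d$, forcing $2\mid d$; this contradiction shows $m+n$ is even. I expect the only real subtlety is that one genuinely needs both the $x$- and the $y$-computation: conjugating only $x$ yields merely $\lambda^{m}=(-1)^{m+n}$, which constrains $\lambda$ but says nothing about the parity of $m+n$, whereas the pair of relations $\lambda^{m+n}=1$ and $\lambda^{n}=(-1)^{m+n}$ together is exactly strong enough to exclude $m+n$ odd.
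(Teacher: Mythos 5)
Your proof is correct and follows essentially the same route as the paper: both arguments derive $\lambda^m=\lambda^n=(-1)^{m+n}$ by comparing conjugation by $a^n$ and by $b^m$ on the skew-primitives $x$ and $y$. The only difference is the concluding parity step, where you use the order of $\lambda$ dividing $m+n$ together with the fact that $-1$ has order $2$, rather than the paper's $\gcd(m,n)$ computation; your version is, if anything, slightly more carefully justified.
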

\begin{proof}
Since $b^{-1}xb=-\lambda x$, it follows that $(-1)^nx=a^{-n}xa^n=b^{-m}xb^m=(-\lambda)^m x$. Hence $\lambda^m=(-1)^{m+n}$.
Similarly, from the condition $a^{-1}ya=-\lambda^{-1} y$, it following that $\lambda^n=(-1)^{m+n}$. Therefore,
$$
\lambda^n=\lambda^m=(-1)^{m+n}.
$$

Suppose $m+n$ is and odd number. Without loss of generality, say $m$ is odd and $n$ is even. Let $d=gcd(m,n)$, which is an odd number. From the above equalities, we know that $\lambda^d=-1$. But then $\lambda^n=1$ which is a contradiction.
\end{proof}

\begin{corollary}
If $(m,n)\neq (0,0)$, then $\lambda$ is a $d$-th root of unity, where $d=gcd(m,n)$.
\end{corollary}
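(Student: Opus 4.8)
The corollary is an immediate consequence of Lemma \ref{m+n} together with its proof, so the plan is simply to extract the sharper conclusion that was already established along the way. First I would recall the two key identities derived in the proof of Lemma \ref{m+n}: from $b^{-1}xb = -\lambda x$ and $a^{-1}ya = -\lambda^{-1}y$, conjugating by $a^n$ and $b^m$ and using $a^n = b^m$ yields $\lambda^m = (-1)^{m+n}$ and $\lambda^n = (-1)^{m+n}$. By Lemma \ref{m+n} we know $m+n \in 2\mathbb{Z}$, so in fact $(-1)^{m+n} = 1$, and hence $\lambda^m = \lambda^n = 1$.

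The remaining step is a purely arithmetic one: if $\lambda^m = 1$ and $\lambda^n = 1$, then $\lambda^d = 1$ where $d = \gcd(m,n)$. This follows from Bézout's identity — write $d = um + vn$ for integers $u,v$, so that $\lambda^d = (\lambda^m)^u (\lambda^n)^v = 1$. Thus $\lambda$ is a $d$-th root of unity. One should note the hypothesis $(m,n) \neq (0,0)$ is exactly what is needed to make $d = \gcd(m,n)$ a well-defined positive integer; when $(m,n) = (0,0)$ the statement is vacuous since there is no constraint of the form $a^n = b^m$ forcing anything on $\lambda$.

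I do not anticipate any genuine obstacle here, since every ingredient is already present in the excerpt; the only care needed is to phrase the argument so that it is clear we are invoking the relation $a^n = b^m$ from the presentation $G(B) = \langle a, b \mid a^m = b^n \rangle$ (with whatever indexing convention the paper has fixed) and that $\lambda \in \KK^\times$, so that "$d$-th root of unity" is meaningful. If one wanted to be slightly more self-contained, one could also remark that since $\KK$ has characteristic zero (or at least characteristic $\neq 2$ and the relevant roots of unity exist), $\lambda$ being a $d$-th root of unity is a nontrivial finiteness constraint that will be used in the subsequent classification of the algebras $B^{m,n}(\lambda, s, t, k)$.
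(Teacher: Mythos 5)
Your proof is correct and is essentially the argument the paper intends: the corollary is stated without proof precisely because the identities $\lambda^m=\lambda^n=(-1)^{m+n}$ were already established in the proof of Lemma \ref{m+n}, and combining them with $m+n\in 2\mathbb{Z}$ and B\'ezout gives $\lambda^{\gcd(m,n)}=1$. Your remarks on the $(m,n)=(0,0)$ case and the convention $\gcd(m,0)=m$ are consistent with the paper's.
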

\begin{remark}
$gcd(m,0)=m$.
\end{remark}

\begin{corollary}
 There are $s,t,k\in\KK$ such that $$x^2=s(1-a^2), y^2=t(1-b^2), xy+\lambda yx=k(1-ab).$$
\end{corollary}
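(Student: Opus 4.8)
The plan is to combine the structural lemmas already proved with the explicit shape of the $\Ext$-quiver $Q^{m,n}$ of $\Mm^B$. By Lemma~\ref{xa1} we know $x^2\in P(1,a^2)$ and $y^2\in P(1,b^2)$, and by Lemma~\ref{xy2}(2) we know $xy+\lambda yx\in P(1,ab)$. Since $x$ and $y$ are non-trivial skew primitives between group-likes of $B$, they lie in $B$, and because $B$ is a sub-Hopf-algebra of $H$ by Theorem~\ref{B tensor G}(1), all three elements $x^2$, $y^2$, $xy+\lambda yx$ lie in $B$. Hence it suffices to show that each of $P_B(1,a^2)$, $P_B(1,b^2)$, $P_B(1,ab)$ is at most one-dimensional, spanned by $1-a^2$, $1-b^2$, $1-ab$ respectively, where a $0$ appears in the degenerate cases.

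First I would record that $a\neq 1$ and $b\neq 1$: otherwise $x$ (resp.\ $y$) is a non-trivial primitive, producing a loop at the vertex $1$, which $Q^{m,n}$ does not have. Next I would check that $a^2\notin\{a,b\}$, $b^2\notin\{a,b\}$ and $ab\notin\{a,b\}$. The equalities $a^2=a$, $b^2=b$, $ab=a$, $ab=b$ each force $a=1$ or $b=1$, already excluded. The equalities $a^2=b$ and $b^2=a$ would give monomial relations $a^2=b^1$ and $a^1=b^2$ in $G(B)$ with odd exponent sum $3$, contradicting Lemma~\ref{m+n} (whose argument, based on $a^{-1}xa=-x$, $b^{-1}xb=-\lambda x$ and the symmetric identities for $y$, applies verbatim to any monomial relation between $a$ and $b$, not just the defining one).

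Granting this, for $g\in\{a^2,b^2,ab\}$ with $g\neq 1$ there is no arrow $1\to g$ in $Q^{m,n}$, since the only arrows out of the vertex $1$ are $1\to a$ and $1\to b$. By the definition of the $\Ext$-quiver (Definition~\ref{Ext-quiver}) this means $\dim P_B(1,g)-1=0$, so $P_B(1,g)=\KK(1-g)$, and membership of $x^2,y^2,xy+\lambda yx$ in these spaces produces the desired scalars $s,t,k$. In the remaining degenerate subcases $a^2=1$ (respectively $b^2=1$, $ab=1$, i.e.\ $b=a^{-1}$) one has $1-a^2=0$ and $P_B(1,1)=0$ because $Q^{m,n}$ has no loops; hence $x^2\in P_B(1,1)=0$, and the identity $x^2=s(1-a^2)$ holds with $s=0$, and likewise for $t$ and $k$.

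I do not expect a genuine obstacle: the content of the statement is essentially a direct reading of the $\Ext$-quiver $Q^{m,n}$ through Lemmas~\ref{xa1} and \ref{xy2}. The only point that needs care is the bookkeeping of the degenerate group relations $a^2\in\{1,b\}$, $b^2\in\{1,a\}$, $ab=1$, where one must either note that the relevant difference $1-g$ vanishes together with the quadratic expression, or use Lemma~\ref{m+n} to rule out the relation entirely.
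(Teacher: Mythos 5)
Your proof is correct and follows essentially the same route as the paper's: place $x^2$, $y^2$, $xy+\lambda yx$ in the skew-primitive spaces via Lemmas~\ref{xa1} and \ref{xy2}, then use Lemma~\ref{m+n} and the shape of $Q^{m,n}$ to see there is no arrow from $1$ to $a^2$, $b^2$, or $ab$, so these spaces contain only trivial skew primitives. Your explicit bookkeeping of the degenerate cases $a^2=1$, $b^2=1$, $ab=1$ is a small additional care the paper leaves implicit.
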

\begin{proof}
From Lemma \ref{xa1}, $x^2\in P(1,a^2)$. Since $a^2\neq a$ or $b$ due to Lemma \ref{m+n}, there is no arrow in $Q^{m,n}$ from $1$ to $a^2$. Thus $x^2=s(1-a^2)$ is a trivial primitive. The other two equality hold similarly.
\end{proof}

We now construct a family of Hopf algebras which reflect the above restrictions. Let $(m,n)$ be a pair of integers with $(m,n) \neq \pm (1,1)$ and $m+n \in 2\mathbb{Z}$. Let $\lambda \in \KK^{\times}$, with $\lambda$ a $\operatorname{gcd}(m,n)^{th}$ root of unity whenever $(m,n) \neq (0,0)$. Let $G^{m,n}$ denote the abelian group $\langle a,b \rangle /(ab=ba, a^m = b^n)$, and $R = \KK G^{m,n}$ the group algebra. Then $X := \KK Q_1^{m,n}$ can be considered a left $R$-module via translation of arrows. In fact, it can be given the structure of a Hopf bimodule: the right $R$-module structure is induced from the equations
\begin{eqnarray*}
 xa = -ax, xb = -\lambda bx, \\
yb = -by, ya = -\frac{1}{\lambda}ay.\\
\end{eqnarray*}
The left and right comodule structures are given by
\begin{eqnarray*}
\rho_L(a^ib^jx ) = a^ib^j\otimes x, \rho_L(a^ib^jy) = a^ib^j\otimes y, \\
\rho_R(a^ib^jx) = x\otimes a^{i+1}b^j, \rho_R(a^ib^jy) = y\otimes a^ib^{j+1}.\\
\end{eqnarray*}
The tensor product $X\otimes_RX$ then carries the structure of a Hopf bimodule over $R$. Set $T^{m,n}_2 := R \oplus X \oplus (X\otimes_RX)$, which we denote by $T$ for brevity. Then $T$ can be considered a graded $k$-algebra: $T = \bigoplus_{i=0}^{\infty}{T_i}$ with $T_0 = R$, $T_1 = X$, $T_2 = X\otimes_RX$, and $T_i = 0$ for $i>2$. The multiplication is induced by the bimodule structures on the $T_i$'s, the natural map $X\otimes_\KK X \rightarrow X\otimes_RX$, and the requirement $T_iT_j \subset T_{i+j}$ for all $i,j \geq 0$. There is a unique coassociative algebra map $\Delta : T\rightarrow T\otimes_\KK T$ satisfying $\Delta(a^ib^j) = a^ib^j\otimes a^ib^j$, $\Delta (a^ib^j x) = a^ib^j\otimes x + x\otimes a^{i+1}b^j$, and $\Delta(a^ib^jy) = a^ib^j\otimes y + y \otimes a^ib^{j+1}$ for all integers $i$ and $j$. In turn, there is a unique algebra map $\epsilon : T \rightarrow \KK$ satisfying $\epsilon(a^ib^j) = 1$ and $\epsilon(T_1) = 0$. This endows $T$ with the structure of a bialgebra: it is then straightforward to verify that $T$ has a unique antipode compatible with this structure. \\
The multiplication on $T$ ensures that $xy+\lambda yx \in P(1,ab)$, $x^2 \in P(1,a^2)$ and $y^2 \in P(1,b^2)$. Therefore, the ideal $I = I^{m,n}(\lambda,s,t,k)$ generated by the relations
\begin{eqnarray*}
xy+\lambda yx = k(1-ab), x^2 = s(1-a^2), y^2 = t(1-b^2) \\
\end{eqnarray*}
is in fact a Hopf ideal. Therefore, $T/I = T^{m,n}_2 / I^{m,n}(\lambda, s,t,k)$ is a Hopf algebra. We summarize this information in the following definition.

\begin{definition}\label{defn Bmn}
For pairs of integers $(m,n)\neq\pm(1,1)$ and $m+n\in2\mathbb Z$, define $B^{m,n}(\lambda, s, t, k) = T^{m,n}_2/I^{m,n}(\lambda,s,t,k)$ to be the Hopf algebra constructed above. More explicitly, it is the Hopf algebra generated by $a,b,x,y$ satisfying the following conditions, where $\lambda\neq 0,s,t,k\in \KK$.
\begin{eqnarray*}
&&ab=ba, a^m=b^n, xy+\lambda yx=k(1-ab),\\
&&ax+xa=0, \lambda bx+xb=0, x^2=s(1-a^2),\\
&&by+yb=0, ay+\lambda ya=0, y^2=t(1-b^2);\\
&&\Delta(a)=a\otimes a, \Delta (b)=b\otimes b,
\Delta (x)=1\otimes x+x\otimes a, \Delta (y)=1\otimes y+y\otimes b;\\
&&\epsilon(a)=\epsilon(b)=1, \epsilon(x)=(y)=0;\\
&&S(a)=a^{-1}, S(b)=b^{-1}, S(x)=-xa^{-1}, S(y)=-yb^{-1}.
\end{eqnarray*}
\end{definition}

From the definition, it follows automatically that $\lambda$ is a $gcd(m,n)$-th root of unity when $(m,n)\neq (0,0)$.
 We want to discuss when $B^{m,n}(\lambda,s,t,k)$ is discrete corepresentation type. First, we need the following preparation:
\begin{proposition}\label{basis}
The Hopf algebra $B^{m,n}(\lambda,s,t,k)$ has a basis
\begin{enumerate}
\item $\{a^ib^jx^py^q | i,j\in\mathbb Z, p,q=0,1\}$ when $m=n=0$;
\item  $\{a^ib^jx^py^q | 0\leq i<m, j\in\mathbb Z, p,q=0,1\}$ when $m>0$;
\item $\{a^ib^jx^py^q | i,j\in\mathbb Z, 0\leq j<n, p,q=0,1\}$, when $n>0$.
\end{enumerate}
\end{proposition}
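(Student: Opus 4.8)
The plan is to establish the claimed bases by an upper-bound/lower-bound argument. First, I would show that the listed monomials \emph{span} $B^{m,n}(\lambda,s,t,k)$. Starting from the construction $T^{m,n}_2/I^{m,n}(\lambda,s,t,k)$, the algebra is generated by $a^{\pm1},b^{\pm1},x,y$, so every element is a $\KK$-linear combination of words in these generators. Using the commutation relations $ax=-xa$, $\lambda bx = -xb$, $by=-yb$, $\lambda ya = -ay$ (together with $ab=ba$), one can move all grouplike factors to the left of any word, collecting the $x$'s and $y$'s to the right; then the relations $x^2 = s(1-a^2)$, $y^2 = t(1-b^2)$, and $xy+\lambda yx = k(1-ab)$ reduce the $x,y$-part to one of $1, x, y, xy$ (the last relation lets us reorder $yx$ into $xy$ modulo lower-degree terms, so an induction on total degree in $x,y$ terminates). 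Finally $a^m=b^n$ lets us normalize the grouplike exponent into the stated fundamental domain for $G^{m,n}$: when $m>0$ we may assume $0\le i<m$, when $n>0$ we may assume $0\le j<n$, and when $m=n=0$ the group is free abelian of rank $2$ so no reduction is needed. This proves the spanning statement in all three cases.

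Next I would prove linear independence, which is the substantive half. The cleanest route is to exhibit the span as already the whole of a known coalgebra: by the constructions preceding Definition \ref{defn Bmn}, $T^{m,n}_2 = R\oplus X\oplus (X\otimes_R X)$ has an explicit $\KK$-basis coming from the group-algebra basis of $R=\KK G^{m,n}$ and the translation bases of $X$ and $X\otimes_R X$. The ideal $I^{m,n}(\lambda,s,t,k)$ is generated by three elements each of which has a nonzero component only in $T_2$ and $T_0$ — more precisely each relation has the form (a single basis vector of $T_2$) $-$ (an element of $T_0$). Hence $T/I$ has dimension equal to $\dim R + \dim X$ in the graded pieces that survive, i.e. the quotient collapses exactly the $T_2$ part onto $T_0$-combinations, and one reads off that a spanning set of the asserted size is in fact a basis. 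Concretely: $R$ contributes the grouplikes $a^ib^j$ (with the stated range), $X$ contributes $a^ib^jx$ and $a^ib^jy$, and $X\otimes_R X$ contributes $a^ib^j x^2$, $a^ib^j xy$, $a^ib^j yx$, $a^ib^j y^2$; modulo $I$ the four degree-two types reduce to the single representative $a^ib^jxy$ plus grouplikes. Counting (or, more carefully, producing a filtration-graded isomorphism) gives the exact basis.

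Alternatively, and perhaps more robustly, one can prove independence by constructing enough linear functionals: define a linear map from the span to $\KK$ for each proposed basis element by using the coradical filtration. The coradical filtration of $B^{m,n}$ places $a^ib^j$ in $B_0$, the $a^ib^jx, a^ib^jy$ in $B_1$, and the degree-two monomials in $B_2$; the associated graded is exactly the path coalgebra piece $\KK Q^{m,n}$ truncated at length $2$ in the relevant directions, where the monomials are manifestly independent because paths in a quiver are. Since passing to $\mathrm{gr}$ cannot increase dimension and the monomials remain distinct and nonzero there (using $m+n\in2\ZZ$ and the root-of-unity condition to guarantee, via Lemma \ref{m+n}, that $a^2\ne a,b$ so the degree-two elements are genuinely in $B_2\setminus B_1$ rather than collapsing), independence follows.

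The main obstacle I anticipate is making the degree-two count rigorous: one must be sure that the three relations in $I^{m,n}(\lambda,s,t,k)$ impose \emph{independent} conditions and do not secretly force some grouplike relation or kill a degree-one element — this is precisely where the hypotheses $(m,n)\ne\pm(1,1)$ and $m+n\in2\ZZ$ (through Lemma \ref{m+n}, ensuring $a^2$, $b^2$, $ab$ are not among $a$, $b$) are needed, so that $x^2=s(1-a^2)$ etc.\ land in $P(1,a^2)$ with $a^2$ a genuinely new grouplike and cannot be rewritten further. I would handle this by verifying directly that $T/I$ has $T_0$ and $T_1$ injecting (the relations live in degrees $0$ and $2$, so they do not touch $T_1$, and the $T_0$-components of the relations are nonzero only on grouplikes not already identified), and that the image of $T_2$ in $T/I$ is spanned by the single family $a^ib^j xy$. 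Once that bookkeeping is done, comparing with the spanning set from the first paragraph closes the argument in each of the three cases.
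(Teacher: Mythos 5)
Your second, ``alternative'' argument for linear independence --- separating the monomials by the coradical filtration, identifying $a^ib^j$, $a^ib^jx$, $a^ib^jy$ with the vertices and arrows of $Q^{m,n}$, and isolating the degree-two family by applying $\Delta$ and reading off the component in $B_0\otimes (B/B_1)$ --- is exactly the proof the paper gives; your spanning argument (rewrite words via the commutation relations, reduce the $x,y$-part to $1,x,y,xy$, normalize exponents using $a^m=b^n$) fills in a step the paper merely asserts. To that extent the proposal reaches the result by the paper's own route.

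Your first route, however, fails at precisely the point you flag as the main obstacle, and the failure is sharper than you anticipate. In the truncated algebra $T=T^{m,n}_2$ one has $T_3=0$, so the two-sided ideal generated by the inhomogeneous relations \emph{does} touch $T_1$: for instance
$$x\cdot\bigl(x^2-s(1-a^2)\bigr)\;=\;x^3-s\,x(1-a^2)\;=\;-s(1-a^2)x\;\in\;I\cap T_1,$$
since $x\cdot x^2\in T_3=0$ and $xa^2=a^2x$; the analogous left and right $x$- and $y$-multiples of the other two relations also land in $T_1$. So the assertion ``the relations live in degrees $0$ and $2$, so they do not touch $T_1$'' is false, and the dimension count for $T/I$ is not automatic. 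This is not a pedantic point: the interaction of exactly these degree-one elements of the ideal is what produces the identity $(\lambda-1)[kx+(\lambda+1)sy]=0$ in the proof of Lemma \ref{constraints}, i.e.\ the parameter constraints there are forced by the requirement that $x$ and $y$ survive independently in the quotient. Any proof along your Route A must therefore either restrict the parameters accordingly or verify by hand (Diamond Lemma, or an explicit faithful comodule) that these degree-one elements of $I$ vanish. Since Route B is self-contained and coincides with the paper's argument, the proposal as a whole is salvageable, but Route A as written does not close.
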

\begin{proof}
We prove (1). The other two cases are similar.

It is clear that each element of $B^{0,0}(\lambda,s,t,k)$ can be written as a linear combination of $a^ib^jx^my^n$, i.e. a linear combination of $a^ib^j$, $a^ib^jx$, $a^ib^jy$, $a^ib^jxy$. It suffice to show that these are linearly independent.

Suppose $\sum \alpha_{ij}a^ib^j+\beta_{ij} a^ib^jx+\gamma_{ij} a^ib^jy+\delta_{ij} a^ib^jxy=0.$
Since $a^ib^j\in B^{0,0}(\lambda,s,t,k)_0$, $a^ib^jx$, $a^ib^jy\in B^{0,0}(\lambda,s,t,k)_1\setminus B^{0,0}(\lambda,s,t,k)_0$, $a^ib^jxy\in B^{0,0}(\lambda,s,t,k)_2\setminus B^{0,0}(\lambda,s,t,k)_1$ it follows that $\sum \alpha_{ij}a^ib^j=0$, $\sum\beta_{ij} a^ib^jx+\gamma_{ij} a^ib^jy=0$ and  $\sum\delta_{ij} a^ib^jxy=0.$ It is clear that $\alpha_{ij}=\beta_{ij}=\gamma_{ij}=0$, because $a^ib^j$, $a^ib^jx$, $a^ib^jy$ represents different vertices and arrows. Finally, since $\sum\delta_{ij} a^ib^jxy=0$,
\begin{eqnarray*}
0&=&\sum\delta_{ij} \Delta(a^ib^jxy)\\
&=& \sum \delta_{ij} a^ib^j\otimes a^ib^j xy-\lambda  \delta_{ij} a^ib^j y\otimes a^ib^{j+1}x+ \delta_{ij} a^ib^j x\otimes a^{i+1}b^jy+ \delta_{ij} a^ib^j xy\otimes a^{i+1}b^{j+1}.
\end{eqnarray*}
Hence $0= \sum \delta_{ij} a^ib^j\otimes a^ib^j xy\in B^{0,0}(\lambda,s,t,k)_0\otimes B^{0,0}(\lambda,s,t,k)_2$. Therefore $\delta_{ij}=0$.
\end{proof}

\begin{corollary}
 $B^{m,n}(\lambda,s,t,k)= B^{m,n}(\lambda,s,t,k)_2$.
\end{corollary}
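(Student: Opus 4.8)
The statement to prove is the corollary $B^{m,n}(\lambda,s,t,k) = B^{m,n}(\lambda,s,t,k)_2$, i.e. that the second term of the coradical filtration already exhausts the whole Hopf algebra.

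\medskip

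The plan is to read off the claim directly from the explicit basis furnished by Proposition~\ref{basis}. In each of the three cases, the basis elements are of the form $a^ib^jx^py^q$ with $p,q \in \{0,1\}$, so every basis element lies in the span of $\{a^ib^j,\ a^ib^jx,\ a^ib^jy,\ a^ib^jxy\}$. First I would observe that the grouplikes $a^ib^j$ lie in the coradical $B_0$; the elements $a^ib^jx$ and $a^ib^jy$ are (translates of) skew-primitives and hence lie in $B_1$, by the defining comultiplications $\Delta(a^ib^jx) = a^ib^j \otimes x + x \otimes a^{i+1}b^j$ and $\Delta(a^ib^jy) = a^ib^j \otimes y + y \otimes a^ib^{j+1}$ together with Lemma~\ref{translation} (or simply the formula for $\Delta$ on $T$). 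Then I would compute $\Delta(a^ib^jxy)$ — this is the one genuine computation, already carried out inside the proof of Proposition~\ref{basis}: one gets $\Delta(a^ib^jxy) = a^ib^j \otimes a^ib^jxy - \lambda\, a^ib^jy \otimes a^ib^{j+1}x + a^ib^jx \otimes a^{i+1}b^jy + a^ib^jxy \otimes a^{i+1}b^{j+1}$. The middle two terms lie in $B_1 \otimes B_1 \subseteq B_1 \otimes B$, and the first and last lie in $B_0 \otimes B$ and $B \otimes B_0$ respectively, so by the defining condition $x \in B_{i+1}$ iff $\Delta(x) \in B_i \otimes B + B \otimes B_0$ of the coradical filtration, $a^ib^jxy \in B_2$.

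\medskip

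Putting these together, every basis element lies in $B_2$, hence $B = B_2$. The main (and only) obstacle is making sure the coradical-filtration bookkeeping is stated in the exact form the paper uses: one must check that the term $-\lambda\, a^ib^jy \otimes a^ib^{j+1}x$ really sits inside $B_1 \otimes B$ rather than needing $B \otimes B_1$, which is immediate since $a^ib^jy \in B_1$; the remaining terms are handled by $B_0$-membership on one tensor factor. Since all the needed comultiplication formulas and the basis are already available, the proof is essentially a one-line deduction: each basis vector has Loewy degree at most $2$, so the Loewy length of $B^{m,n}(\lambda,s,t,k)$ is at most $2$, which is exactly the assertion $B^{m,n}(\lambda,s,t,k) = B^{m,n}(\lambda,s,t,k)_2$.
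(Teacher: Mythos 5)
Your proof is correct and follows exactly the route the paper intends: the corollary is stated without proof immediately after Proposition~\ref{basis} because it is read off from that basis, and your verification that $a^ib^j\in B_0$, $a^ib^jx,\,a^ib^jy\in B_1$, and $\Delta(a^ib^jxy)\in B_1\otimes B + B\otimes B_0$ (using the comultiplication already computed in the proof of Proposition~\ref{basis}) is precisely the omitted bookkeeping. Nothing is missing.
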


From the definition of the Hopf algebra $B^{m,n}(\lambda,s,t,k)$, $Q^{m,n}$ is the $\Ext$-quiver of the comodule category.
\begin{lemma} \label{B paths}
The Hopf algebra $B^{m,n}(\lambda,s,t,k)$ is a subcoalgebra of $\KK Q^{m,n}$.
\begin{enumerate}
\item  $B^{m,n}(\lambda,s,t,k)$ contains neither paths $(a^ib^jx|a^{i+1}b^jx)$ nor $(a^ib^jy|a^ib^{j+1}y)$.
\item  The linear combination $c_1(a^ib^jx|a^{i+1}b^jy)+c_2(a^ib^jy|a^ib^{j+1}x)\in B^{m,n}(\lambda,s,t,k)$ if and only if $c_2+\lambda c_1=0$.
\end{enumerate}
\end{lemma}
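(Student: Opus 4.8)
The plan is to deduce everything from the realization of $B := B^{m,n}(\lambda,s,t,k)$ as a subcoalgebra of the path coalgebra $\KK Q^{m,n}$, combined with the explicit basis from Proposition \ref{basis}. Fix the coalgebra embedding $\iota : B \hookrightarrow \KK Q^{m,n}$, chosen so that the arrows $a^ib^j \to a^{i+1}b^j$ and $a^ib^j \to a^ib^{j+1}$ are $\iota(a^ib^jx)$ and $\iota(a^ib^jy)$, as in the setup of this section. Any coalgebra map respects coradical filtrations, and the coradical filtration of a path coalgebra is the path-length filtration, so $\iota(a^ib^jx), \iota(a^ib^jy)$ are paths of length $1$ (with vanishing length-$\ge 2$ part) and $\iota(a^ib^jxy)$ is a combination of paths of length $\le 2$. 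Thus every element of $\iota(B)$ has its length-$2$ homogeneous component equal to a $\KK$-linear combination of the length-$2$ components of the elements $\iota(a^kb^lxy)$, and both parts of the lemma will come from analyzing that length-$2$ component.

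Next I would compute $\iota(a^ib^jxy)$ modulo length $\le 1$. From $\Delta(x)=1\otimes x+x\otimes a$, $\Delta(y)=1\otimes y+y\otimes b$ and $xb=-\lambda bx$ one gets, after translating by $a^ib^j$,
\[
\Delta(a^ib^jxy) = a^ib^j\otimes a^ib^jxy - \lambda\, a^ib^jy\otimes a^ib^{j+1}x + a^ib^jx\otimes a^{i+1}b^jy + a^ib^jxy\otimes a^{i+1}b^{j+1}.
\]
Applying $\iota\otimes\iota$ and comparing with the comultiplication in $\KK Q^{m,n}$: the ``biprimitive'' part of $\Delta(\iota(a^ib^jxy))$ (the part in which both tensor factors are nontrivial paths) must equal $a^ib^jx\otimes a^{i+1}b^jy - \lambda\, a^ib^jy\otimes a^ib^{j+1}x$. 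Since $a\ne b$ in $G^{m,n}$, the length-$2$ paths from $a^ib^j$ to $a^{i+1}b^{j+1}$ are exactly the two ``staircases'' $(a^ib^jx\,|\,a^{i+1}b^jy)$ and $(a^ib^jy\,|\,a^ib^{j+1}x)$, and the term $q\otimes r$ arising in $\Delta p$ (for $q,r$ nontrivial) determines the path $p=(q|r)$; hence matching coefficients forces
\[
\iota(a^ib^jxy) = (a^ib^jx\,|\,a^{i+1}b^jy) - \lambda\,(a^ib^jy\,|\,a^ib^{j+1}x) + w,
\]
with $w$ a linear combination of vertices and arrows, so $w \in \iota(B)$. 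In particular the length-$2$ component of $\iota(a^kb^lxy)$ is $(a^kb^lx|a^{k+1}b^ly) - \lambda(a^kb^ly|a^kb^{l+1}x)$ for every $k,l$.

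Now both statements follow by projecting onto length-$2$ components. For (1): the length-$2$ component of an arbitrary element of $\iota(B)$ is a linear combination of the paths $(a^kb^lx|a^{k+1}b^ly)$ and $(a^kb^ly|a^kb^{l+1}x)$, each of which uses one $x$-arrow and one $y$-arrow; since $(a^ib^jx|a^{i+1}b^jx)$ uses two $x$-arrows and $(a^ib^jy|a^ib^{j+1}y)$ two $y$-arrows, linear independence of distinct paths in $\KK Q^{m,n}$ shows neither lies in $\iota(B)$. For (2): $c_1(a^ib^jx|a^{i+1}b^jy)+c_2(a^ib^jy|a^ib^{j+1}x)$ is already homogeneous of length $2$, so if it lies in $\iota(B)$ it is a combination $\sum_{k,l}\delta_{kl}\left[(a^kb^lx|a^{k+1}b^ly)-\lambda(a^kb^ly|a^kb^{l+1}x)\right]$; only the terms with $a^kb^l=a^ib^j$ are supported on our two paths, so the sum is $\delta\left[(a^ib^jx|a^{i+1}b^jy)-\lambda(a^ib^jy|a^ib^{j+1}x)\right]$, whence $c_2=-\lambda c_1$. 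The converse is immediate: if $c_2+\lambda c_1=0$ then $c_1(a^ib^jx|a^{i+1}b^jy)+c_2(a^ib^jy|a^ib^{j+1}x) = c_1\bigl(\iota(a^ib^jxy)-w\bigr)\in\iota(B)$.

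The main obstacle I anticipate is the bookkeeping in the folded cases $(m,n)\ne(0,0)$, where $Q^{m,n}$ may contain cycles: one must check that the list of length-$2$ paths from $a^ib^j$ to $a^{i+1}b^{j+1}$ is unaffected by the identification $a^m=b^n$ (it is, since arrows are distinguished by their source, which depends only on the group element), and that basis elements $a^kb^lxy$ with $a^kb^l$ equal in $G^{m,n}$ are literally the same element of $B$, so that no overcounting occurs in the sum above; the degenerate subcase $ab=1$ merely allows a harmless grouplike summand in $w$. All the rest is the comultiplication comparison carried out in the second paragraph.
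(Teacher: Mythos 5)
Your proof is correct and follows essentially the same route as the paper's: both hinge on the computation $\Delta(xy)=1\otimes xy-\lambda\, y\otimes bx+x\otimes ay+xy\otimes ab$ together with linear independence of paths in $\KK Q^{m,n}$. The paper packages the ``only if'' directions via an explicit functional on $B\otimes B$ and the observation that $\Delta(x|ax)$ contains a ``two arrows of the same direction'' term, rather than your length-$2$ associated-graded bookkeeping, but the content is the same.
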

\begin{proof}
Without loss of generality we prove both statements for the case $i=j=0$.

(1) Notice that $\Delta(x|ax)=1\otimes (x|ax)+x\otimes ax+(x|ax)\otimes a^2$, where the middle term is a tensor product of two arrows of the same direction. However, the comultiplications of none of the basis elements contain such a term. Therefore $(x|ax)\not\in B^{m,n}(\lambda,s,t,k)$. Similarly, $(y|by)\not\in B^{m,n}(\lambda,s,t,k)$.

(2) Denote by $p=(x|ay)-\lambda(y|bx)$. Since $\Delta (p-xy)=1\otimes(p-xy)+(p-xy)\otimes ab$, $p-xy\in \KK Q^{m,n}_0\subseteq B^{m,n}(\lambda,s,t,k)$. So $p\in B^{m,n}(\lambda,s,t,k)$. Therefore any linear combination  $c_1(x|ay)+c_2(y|bx)\in B^{m,n}(\lambda,s,t,k)$, whenever $c_2+\lambda c_1=0$.

Conversely, define a $\KK$-linear map $\varphi:B^{m,n}(\lambda,s,t,k)\otimes B^{m,n}(\lambda,s,t,k)\to \KK$ by $\varphi(x\otimes ay)=\lambda$, $\varphi(y\otimes bx)=1$ and $\varphi=0$ on the other basis element. It is easy to check that the linear map $\varphi\circ\Delta: B^{m,n}(\lambda,s,t,k)\to\KK$ is trivial.
If  $c_1(x|ay)+c_2(y|bx)\in B^{m,n}(\lambda,s,t,k)$,   $\varphi(\Delta(c_1(x|ay)+c_2(y|bx)) )=c_1\lambda+c_2=0$.
\end{proof}

For an integer $N\geq 0$, Denote by  $T_N=\Span_\KK\{a^ib^jxy, a^{i+1}b^jy, a^ib^{j+1}x,a^{i+1}b^{j+1}| -N\leq i,j\leq N  \}$ a $B^{m,n}(\lambda,s,t,k)$ comodule and  $B^{m,n}_N(\lambda,s,t,k)=cf(T_N)$ the corresponding finite dimensional coefficient coalgebra.

\begin{theorem}\label{B is discrete}
The Hopf algebra $B^{m,n}(\lambda,s,t,k)$ is discrete corepresentation type if and only if $m\neq n$ or $m=n=0$.
\end{theorem}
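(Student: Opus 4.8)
\noindent\emph{Proof proposal.} Write $B=B^{m,n}(\lambda,s,t,k)$ and $B_N=B^{m,n}_N(\lambda,s,t,k)=cf(T_N)$. By Proposition \ref{discrete type dual}, $B$ is of discrete corepresentation type iff every finite dimensional subcoalgebra of $B$ is of finite corepresentation type. Using the explicit basis of Proposition \ref{basis} one checks that $B=\bigcup_{N\ge 0}B_N$ is the union of the ascending chain $B_0\subseteq B_1\subseteq\cdots$ of finite dimensional subcoalgebras (here $cf(T_N)\subseteq cf(T_{N+1})$ since $T_N\subseteq T_{N+1}$), so every finite dimensional subcoalgebra of $B$ lies inside some $B_N$; hence $B$ is of discrete corepresentation type iff $(B_N)^*$ is of finite representation type for all $N$. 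Since $B=B_2$, each $(B_N)^*$ has radical cube zero, and by Lemma \ref{B paths} it is the bound quiver algebra of (the opposite of) the finite subquiver of $Q^{m,n}$ supporting $T_N$, subject to the relations $\bar x^2=0$, $\bar y^2=0$ on consecutive like-arrows and the $\lambda$-commutativity relation $\bar x\bar y=-\lambda^{\pm1}\bar y\bar x$ on every unit square; in particular the only nonzero length-two paths are the single ``diagonal'' composite through each square.

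\noindent\emph{If $m=n\neq 0$ then $B$ is not discrete.} Since $(m,n)\neq\pm(1,1)$ we may assume $m=n\ge 2$ (replace $(m,n)$ by $-(m,n)$ if needed; the group relation $a^m=b^n$ is unchanged). In $G(B)=\langle a,b\mid ab=ba,\ a^m=b^m\rangle$ the element $c:=ab^{-1}$ has order exactly $m$, so $G(B)\cong\langle a\rangle\times\langle c\rangle\cong\ZZ\times\ZZ/m$. Let $Q'$ be the full subquiver of $Q^{m,n}$ on the $2m$ distinct vertices $\{\,c^s,\ ac^s\ :\ s\in\ZZ/m\,\}$. The only $x$- and $y$-arrows between these vertices are $c^s\xrightarrow{x}ac^s$ and $c^s\xrightarrow{y}bc^s=ac^{s-1}$, so $Q'$ is an oriented $2m$-cycle with the bipartite orientation (the $c^s$ are sources, the $ac^s$ are sinks), i.e. a bipartite quiver of type $\widetilde A_{2m-1}$. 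As $Q'$ is bipartite, $\KK Q'_1=\KK Q'\subseteq\KK Q^{m,n}_1\subseteq B$ is a finite dimensional subcoalgebra with $(\KK Q')^*\cong\KK[(Q')^{op}]$ of infinite representation type, so by Lemma \ref{non discrete type} $B$ is not of discrete corepresentation type.

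\noindent\emph{If $m\neq n$ or $m=n=0$ then $B$ is discrete.} We must show $(B_N)^*$ is representation-finite for all $N$. The plan is to exploit the rigidity recorded above: because of radical cube zero together with $\bar x^2=\bar y^2=0$, the interior of $(B_N)^*$ is assembled from overlapping $\lambda$-commutative squares — each such square being the incidence algebra of the $2\times 2$ poset up to a twist, hence representation-finite — rather than from longer commutative grids (this is exactly where the commutativity relations in $B^{m,n}$ must be used: without them one would see a free $\widetilde A_3$ on each square). Concretely I would localise $(B_N)^*$ at the idempotent supported at the ``boundary'' vertices of its quiver (Proposition \ref{localization}) to strip off Nakayama summands, and then use quotient-quiver covering maps (Lemmas \ref{quotient quiver}, \ref{covering fin type} and Corollary \ref{cor quotient quiver}) to identify what remains with explicitly representation-finite algebras. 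The combinatorial input making this possible is that, precisely when $m\neq n$ or $m=n=0$, no finite subquiver of $Q^{m,n}$ admits a quotient-quiver map onto a bipartite non-Dynkin quiver, in contrast with the closed anti-diagonal $2m$-cycle created by $a^m=b^m$ for $m\ge 2$; consequently no tame or wild piece ever appears. One also notes the answer cannot depend on $\lambda,s,t,k$, since both the bad subquiver above and these reductions are constructed uniformly in those parameters.

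\noindent\emph{Main obstacle.} The hard part is the ``if'' direction, i.e. the uniform-in-$N$ verification that $(B_N)^*$ is representation-finite when $m=n=0$ or $m\neq n$. The difficulty is bookkeeping how far the square-commutativity relations propagate inside a radical-cube-zero algebra — one must rule out any hidden extended Dynkin subconfiguration both in the honest grid $Q^{0,0}$ and in every genuinely folded grid with $m\neq n$ — whereas the entire content of the ``only if'' direction is that the relation $a^m=b^m$ with $m\ge 2$ manufactures exactly such a configuration.
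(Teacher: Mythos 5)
Your ``only if'' half is correct and is essentially the paper's argument in different clothing: you exhibit the full subquiver of $Q^{m,m}$ on $\{c^s, ac^s\}$ with $c=ab^{-1}$ as a bipartite $\widetilde A_{2m-1}$ cycle and invoke Lemma \ref{non discrete type}, whereas the paper exhibits the corresponding infinite family of band modules over that same closed zigzag; the two are the same phenomenon, and your phrasing via $\KK Q'_1=\KK Q'\subseteq B$ is if anything cleaner and checks out (the $2m$ vertices are distinct in $\ZZ\times\ZZ/m$, and no other arrows of $Q^{m,m}$ join them).

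The ``if'' half, however, has a genuine gap: it is a plan, not a proof, and the plan's central criterion is not sufficient. You propose to conclude representation-finiteness of each $(B_N)^*$ from the fact that no finite subquiver of $Q^{m,n}$ maps by a quotient-quiver morphism onto a bipartite non-Dynkin quiver. That only rules out one mechanism for producing infinitely many indecomposables (the one used in Lemmas \ref{non discrete type} and \ref{lem:quotient quiver}); the absence of that obstruction does not by itself bound the indecomposables, and the vague appeal to ``localise at the boundary and identify what remains with explicitly representation-finite algebras'' is never carried out --- indeed your own ``main obstacle'' paragraph concedes exactly this. What is actually needed, and what the paper supplies, is a positive classification: since $B=B_2$ every comodule has Loewy length at most $3$; Lemma \ref{B paths}(1) forces every Loewy-length-$2$ indecomposable to be a zigzag string (no $x$-$x$ or $y$-$y$ composites survive), and these strings cannot close into bands precisely when $m\neq n$ or $m=n=0$; Lemma \ref{B paths}(2) forces every Loewy-length-$3$ indecomposable to be a single commutative diamond (one checks that such a module is an extension of a semisimple socle by a direct sum of strings and must collapse to one diamond). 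There are finitely many simples, strings and diamonds supported on $B_N$, which is the finiteness statement your proposal stops short of proving. Your observation that each square is ``an incidence algebra of the $2\times 2$ poset up to a twist'' addresses one square in isolation but not how overlapping squares glue, which is exactly where the argument has to do work.
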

 \begin{proof}
 When $m=n\neq 0$, $B^{n,n}(\lambda,s,t,k)\cmodd$ contains an infinite family of band modules:
 $$
\begin{tikzpicture}[->]

\node(0) at (0,1) {$\cdot$};
\node(1) at (0,0) {$\cdot$};
\node(2) at (1,0) {$\cdot$};
\node(3) at (1,-1) {$\cdot$};
\node(4) at (2,-1)  {$\cdot$};
 \node (5) at (2,-2){$a^n$};

 \node(a) at (-1,1) {$b^n$};
 \node (b) at (1,-0.5) {$\vdots$};

 \draw (1)--(0); \draw(1)--(2);
  \draw(3)--(4); \draw (5)--(4);
  \draw(a)--(0); 
\end{tikzpicture}
$$

 Hence $B^{n,n}(\lambda,s,t,k)$ is not discrete corepresentation type.

 Now assume $m\neq n$ or $m=n=0$.

Since any finite dimensional sub-coalgebra of $B^{m,n}(\lambda,s,t,k)$ is a sub-coalgebra of $B^{m,n}_N(\lambda,s,t,k)$ for some $N$. It suffice to show that $B^{m,n}_N(\lambda,s,t,k)$ is finite representation type.

In fact, since $B^{m,n}(\lambda,s,t,k)=B^{m,n}(\lambda,s,t,k)_2$,  $B^{m,n}_N(\lambda,s,t,k)$ only contains comodules with Loewy length at most $3$.

Indecomposable comodules with Loewy length $1$ are just simples $S=\Span_\KK\{a^ib^j\}$.

Due to Lemma \ref{B paths} (1),  indecomposable comodules with Loewy length $2$ are the strings $$C=\Span_\KK\{\cdots, a^ib^jy, a^ib^j, a^ib^j x, a^{i+1}b^{j-1}y, a^{i+1}b^{j-1}\cdots \}:$$
$$
\begin{tikzpicture}[->]

\node(0) at (0,1) {$\cdot$};
\node(1) at (0,0) {$\cdot$};
\node(2) at (1,0) {$\cdot$};
\node(3) at (1,-1) {$\cdot$};
\node(4) at (2,-1)  {$\cdot$};
 \node (5) at (2,-2){$\cdot$};

 \node(a) at (-1,1) {$\cdots$};
 \node (b) at (3,-2) {$\cdots$};

 \draw (1)--(0); \draw(1)--(2);
  \draw (3)--(2); \draw(3)--(4); \draw (5)--(4);
  \draw(a)--(0); \draw(5)--(b);
\end{tikzpicture}
$$
 No string can form a band, when $m\neq n$ or $m=n=0$.

Due to Lemma \ref{B paths} (2), indecomposable comodules with Loewy length $3$ contain the diamonds $$D=\Span_\KK\{a^ib^jxy, a^{i+1}b^jy, a^ib^{j+1}x,a^{i+1}b^{j+1}\}:$$

$$
\begin{tikzpicture}[->]

 \node(1) at (0,0) {$\cdot$};
\node(2) at (1,0) {$\cdot$};
\node(3) at (0,1) {$\cdot$};
\node(4) at (1,1)  {$\cdot$};

 \draw(1)--(3);
\draw(1)--(2);
\draw(2)--(4);
\draw(3)--(4);
\end{tikzpicture}
$$

To see that these are all indecomposable comodules, notice that a simple $S$ can extend with a string $C$ if and only if $S\cong \Top D$, $C\cong \Rad D$ or $ S\cong\Soc D$, $C\cong D/\Soc D$ for some diamond $D$.  Assume $M$ is an indecomposable comodule of Loewy length $3$. Since $M$ is an extension of $\Soc M$, which is a direct sum of simplies, and $M/\Soc M$, which has Loewy length $2$, hence a direct sum of strings. Therefore $M$ must be a diamond.

Since in the category of comodules ${}^{B^{m,n}_N(\lambda,s,t,k)}\Mm$, there are finitely many simples, strings and diamonds. So $B^{m,n}_N(\lambda,s,t,k)$ is finite representation type.
 \end{proof}

Our next task  is to classify the Hopf algebras $B^{m,n}(\lambda,s,t,k)$ up to isomorphism.

\begin{lemma} \label{Bmn iso}
Hopf algebras  $B^{m,n}(\lambda,s,t,k)\cong B^{m',n'}(\lambda',s',t',k')$ if and only if  for some $\alpha, \beta\in \KK^\times$, one of the following is satisfied:
\begin{enumerate}
\item $(m,n)=\pm(m',n')$, $(\lambda,s,t,k)=(\lambda',\alpha^2s',\beta^2t',\alpha\beta k')$
\item $(m,n)=\pm(n',m')$, $\lambda=\lambda'=1$, $(s,t,k)=(\alpha^2t', \beta^2s', \alpha\beta k')$.
\end{enumerate}
\end{lemma}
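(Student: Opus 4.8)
The plan is to establish the two implications separately: sufficiency by exhibiting explicit isomorphisms and checking they respect the presentation of Definition \ref{defn Bmn}, and necessity by a structural argument that begins at the coradical and works upward through the skew-primitives and the quadratic relations. For sufficiency, two elementary families of maps do the job. First, for $\alpha,\beta\in\KK^{\times}$ the assignment $a\mapsto a$, $b\mapsto b$, $x\mapsto\alpha x$, $y\mapsto\beta y$ defines a Hopf-algebra homomorphism $B^{m,n}(\lambda,s,t,k)\to B^{m',n'}(\lambda',s',t',k')$ whenever $(m,n)=\pm(m',n')$ (for $(m,n)=-(m',n')$ one has $G^{m,n}=G^{m',n'}$, since $a^{m}=b^{n}$ and $a^{-m}=b^{-n}$ are equivalent relations): the grouplike relations and the formulas for $\Delta,\epsilon,S$ are untouched, while the three quadratic relations become the corresponding ones precisely when $\lambda=\lambda'$, $s=\alpha^{2}s'$, $t=\beta^{2}t'$, $k=\alpha\beta k'$; the inverse has the same shape, so this is an isomorphism, and it realizes case (1). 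Second, the transposition $a\mapsto b$, $b\mapsto a$, $x\mapsto\alpha y$, $y\mapsto\beta x$ is likewise compatible with $\Delta,\epsilon,S$ and carries $G^{m,n}$ onto $G^{n,m}$; when $\lambda=1$ the full presentation is symmetric under interchanging $(a,x,m,s)$ with $(b,y,n,t)$, so after composing with a rescaling one obtains an isomorphism $B^{m,n}(1,s,t,k)\cong B^{n,m}(1,s',t',k')$ with $(s,t,k)=(\alpha^{2}t',\beta^{2}s',\alpha\beta k')$, which is case (2).

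For necessity, let $\phi\colon B^{m,n}(\lambda,s,t,k)\to B^{m',n'}(\lambda',s',t',k')$ be an isomorphism. As a coalgebra map $\phi$ fixes $1$, respects the coradical filtration, and restricts to a group isomorphism $\bar\phi\colon G^{m,n}\to G^{m',n'}$ with $\phi(P(1,g))=P(1,\bar\phi(g))$ for each grouplike $g$. Since $Q^{m,n}$ is the $\Ext$-quiver of $\Mm^{B^{m,n}(\lambda,s,t,k)}$, the only grouplikes carrying a nontrivial $(1,\cdot)$-skew-primitive are $a$ and $b$, each with $\dim P(1,\cdot)=2$, and likewise for the target; together with $\phi(x)\in P(1,\bar\phi(a))$, $\phi(y)\in P(1,\bar\phi(b))$ and the fact that $\phi(x),\phi(y)$ lie outside the coradical, this forces $\{\bar\phi(a),\bar\phi(b)\}=\{a',b'\}$ (note $a\neq b$, as $(m,n)\neq\pm(1,1)$), and $\phi(x)=\alpha x'+\mu(1-\bar\phi(a))$, $\phi(y)=\beta y'+\nu(1-\bar\phi(b))$ in the ``diagonal'' subcase $\bar\phi(a)=a'$, with the analogous expressions in the ``anti-diagonal'' subcase $\bar\phi(a)=b'$. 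Applying $\phi$ to $ax+xa=0$ (resp.\ $by+yb=0$), the nontrivial parts cancel by the corresponding relation of the target and only $2\mu(g-g^{2})$ with $g\in\{a',b'\}$ survives; since no admissible index pair makes $a'$ or $b'$ trivial (that would be $(m',n')=(\pm1,0)$, excluded by $m'+n'\in2\ZZ$) and $\operatorname{char}\KK\neq2$, we conclude $\mu=\nu=0$. Thus, up to scalars $\alpha,\beta\in\KK^{\times}$, $\phi$ is diagonal ($a\mapsto a'$, $b\mapsto b'$, $x\mapsto\alpha x'$, $y\mapsto\beta y'$) or anti-diagonal ($a\mapsto b'$, $b\mapsto a'$, $x\mapsto\alpha y'$, $y\mapsto\beta x'$).

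In the diagonal subcase the relation $a^{m}=b^{n}$ of $G^{m,n}$ transports to $(a')^{m}=(b')^{n}$ in $G^{m',n'}$; since the set of $(i,j)\in\ZZ^{2}$ with $(a')^{i}=(b')^{j}$ in $G^{m',n'}$ is exactly the cyclic subgroup generated by $(m',n')$, this gives $(m,n)=c\,(m',n')$, and applying $\bar\phi^{-1}$ gives $(m',n')=c'\,(m,n)$, whence $c=c'=\pm1$ and $(m,n)=\pm(m',n')$. Transporting the remaining relations then forces $\lambda=\lambda'$ (using $b'x'\neq0$), $s=\alpha^{2}s'$, $t=\beta^{2}t'$ and $k=\alpha\beta k'$: this is case (1). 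The anti-diagonal subcase is parallel, except that $a^{m}=b^{n}$ transports to $(b')^{m}=(a')^{n}$, giving $(m,n)=\pm(n',m')$, and the relations $\lambda bx+xb=0$, $ay+\lambda ya=0$ are interchanged with the $\lambda'$-pair of the target; comparing coefficients and absorbing redundant scalars by a rescaling produces the normal form of case (2). The finitely many degenerate index pairs for which $(a')^{2}=1$, $(b')^{2}=1$ or $a'b'=1$ — namely those of shape $(\pm2,0)$, $(0,\pm2)$, $(1,-1)$, $(-1,1)$ — are handled directly: there the relevant quadratic relation reads $0=0$, so one of $s,t,k$ is absent from the presentation and imposes no constraint.

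The main obstacle is the necessity direction, specifically the rigidity step: showing that $\phi$ must send grouplikes to grouplikes in the prescribed way and must send $x,y$ to scalar multiples of $x',y'$ (respectively $y',x'$) with no mixing and no trivial-skew-primitive tails. Once that rigidity is secured, matching the four numerical parameters is essentially bookkeeping, the only delicate point being the handful of degenerate index pairs above.
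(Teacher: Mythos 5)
Your overall route is the same as the paper's: sufficiency via the explicit maps $\varphi_{\alpha,\beta}$ and $\psi_{\alpha,\beta}$ of Remark \ref{def iso}, and necessity by tracking where an isomorphism sends grouplikes and the nontrivial skew-primitives, killing the trivial tails, and transporting the defining relations. Your tail-killing step (apply $\phi$ to $ax+xa=0$ and use $a'\neq 1$, which follows from $m'+n'\in 2\ZZ$) is the paper's computation $\varphi(a^{-1}xa)=\varphi(-x)$ in a different guise, and your derivation of $(m,n)=\pm(m',n')$ from the subgroup $\langle(m',n')\rangle$ makes the paper's ``$l$ and $l'$'' argument explicit. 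The diagonal case is complete as you describe it.

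The gap is in the anti-diagonal case, at exactly the point you compress into ``comparing coefficients and absorbing redundant scalars by a rescaling produces the normal form of case (2).'' Carry the comparison out: under $a\mapsto b'$, $b\mapsto a'$, $x\mapsto\alpha y'$, $y\mapsto\beta x'$, the source relation $\lambda bx+xb=0$ goes to $\alpha(\lambda a'y'+y'a')$, and reducing by the target relation $a'y'+\lambda'y'a'=0$ leaves $(\lambda-\tfrac{1}{\lambda'})\,a'y'$; the relation $ay+\lambda ya=0$ similarly leaves $(1-\lambda\lambda')\,b'x'$; and $xy+\lambda yx=k(1-ab)$ yields $\tfrac{1}{\lambda}=\lambda'$ together with $k=\lambda\alpha\beta k'$. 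So coefficient comparison delivers only $\lambda\lambda'=1$, not the asserted $\lambda=\lambda'=1$, and your argument as written does not establish the extra condition in case (2). You need either a further argument pinning $\lambda$ down (the paper extracts it from $\varphi(a^{-1}xa)=\varphi(a)^{-1}\varphi(x)\varphi(a)$ with $\varphi(a)=b'$; redo that conjugation yourself and check carefully which target relation governs $b'^{-1}y'b'$ before relying on it), or an acknowledgment that the honest output of your method is $\lambda\lambda'=1$, which must then be reconciled with the statement. Relatedly, your observation about the degenerate index pairs $(\pm2,0)$, $(0,\pm2)$, $(1,-1)$ is correct but cuts against you rather than for you: when, say, $a'^2=1$, the relation $x^2=s(1-a^2)$ is vacuous, so the equality $s=\alpha^2s'$ genuinely cannot be extracted there; saying it ``imposes no constraint'' concedes that the parameter-matching half of the statement is not being proved in those cases rather than disposing of them.
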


\begin{proof}
Denote by $a',b',x',y'$ the corresponding elements in $B^{m',n'}(\lambda',s',t',k')$ as in Definition \ref{defn Bmn}.
 Suppose there is a Hopf algebra (bialgebra) isomorphism $\varphi: B^{m,n}(\lambda,s,t,k)\to B^{m',n'}(\lambda',s',t',k')$. Then $\varphi$ preserves group-like elements and $\varphi(1)=1'$.

Since $\Delta'(\varphi(x))=\varphi\otimes\varphi(\Delta(x))=1'\otimes\varphi(x)+\varphi(x)\otimes \varphi(a)$, $\varphi(x)\in P(1',\varphi(a))$. On the other hand, $\varphi(x)$ is not a trivial skew primitive element since $\varphi$ is invertible. Hence either $\varphi(a)=a'$ or $\varphi(a)=b'$. We discuss these two cases separately:

{\bf Case 1:}  When $\varphi(a)=a'$, $\varphi(b)=b'$. Hence $a'^m=b'^n$. Therefore $(m,n)=l(m',n')$ for some integer $l$. Since $\varphi$ is invertible, $(m',n')=l'(m,n)$ for some integer $l'$. So, $l=\pm1$ or $(m,n)=(m',n')=(0,0)$.

Next assume $\varphi(x)=\alpha x'+\gamma (1'-a')$.

On one hand, $\varphi(a^{-1}xa)=\varphi(-x)=-\alpha x'-\gamma (1'-a')$. On the other hand, $\varphi(a^{-1}xa)= {a'}^{-1}\varphi(x)a'=-\alpha x'+\gamma(1'-a')$. It follows that $\gamma (1'-a')=0$. (char $\KK\neq 2$)

One can easily show that $\varphi(b)=b'$ and $\varphi(y)=\beta y'$ similarly.
Then it follows immediately that $s=\alpha^2s'$, $t=\beta^2 t'$.

Last, since $k(1-a'b')= \varphi(xy+\lambda yx)= \alpha\beta (x'y'+\lambda y'x')$, if $\lambda\neq \lambda'$, then $x'y'\in B^{m,n}(\lambda',s',t',k')_0$ which is a contradiction. Hence $\lambda=\lambda'$ and then $k=\alpha\beta k'$.

{\bf Case 2:}  When $\varphi(a)=b'$, $\varphi(b)=a'$.  Hence $b'^m=a'^n$. Therefore $(n,m)=k(m',n')$ for some integer $k$. Similarly since $\varphi$ is invertible $k=\pm 1$ or $(m,n)=(m',n')=(0,0)$.

 Next assume $\varphi(x)=\alpha y'+\gamma (1'-b')$.

On one hand, $\varphi(a^{-1}xa)=\varphi(-x)=-\alpha y'-\gamma (1'-b')$. On the other hand, $\varphi(a^{-1}xa)= {a'}^{-1}\varphi(x)a'=-\frac{\alpha}{\lambda} y'+\gamma(1'-b')$. It follows that $\lambda=1$ and $\gamma(1'-b')=0$. (char $\KK\neq 2$)

One can easily show that $\varphi(b)=a'$ and $\varphi(y)=\beta x'$ similarly.
Then it follows immediately that $s=\alpha^2t'$, $t=\beta^2 s'$ and $k=\alpha\beta k'$.
\end{proof}

\begin{remark}\label{def iso}
Notice that $B^{m,n}=B^{-m,-n}$. For $\alpha,\beta\in\KK^\times$, denote by $\varphi_{\alpha, \beta}: B^{m,n}(\lambda,s,t,k)\to B^{m,n}(\lambda',s',t',k')$ the isomorphism
$$\varphi_{\alpha, \beta}(a)=a', \varphi_{\alpha, \beta}(b)=b', \varphi_{\alpha, \beta}(x)=\alpha x', \varphi_{\alpha, \beta}(y)=\beta y'$$
and  $\psi_{\alpha, \beta}:B^{m,n}(\lambda,s,t,k)\to B^{n,m}(\lambda',s',t',k')$ the isomorphism
$$\psi_{\alpha, \beta}(a)=b', \psi_{\alpha, \beta}(b)=a', \psi_{\alpha, \beta}(x)=\alpha y', \psi_{\alpha, \beta}(y)=\beta x'$$
\end{remark}


In fact, there are more constraint on the parameters $\lambda,s,t,k$.

\begin{lemma}\label{constraints}
For the algebra $B^{m,n}(\lambda,s,t,k)$, one of the following happens:\\
(1) $\lambda=1$ \\
(2) $\lambda=-1$ and $k=0$\\
(3) $k=s=t=0$.
\end{lemma}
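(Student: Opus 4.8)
The plan is to extract the algebraic constraints by multiplying the defining relations together and comparing with the basis from Proposition~\ref{basis}. The key observation is that in $B^{m,n}(\lambda,s,t,k)$ we have two ``quadratic'' relations involving $x$ and $y$: the commutation rule $\lambda bx + xb = 0$ (equivalently $xb = -\lambda bx$) together with $ax + xa = 0$, and the mixed relation $xy + \lambda yx = k(1-ab)$, and similarly $x^2 = s(1-a^2)$, $y^2 = t(1-b^2)$. The idea is to compute an expression in two different ways using associativity and read off forced identities on the scalars. The most natural candidate is $x^2 y$ (or $x y^2$, or $x\cdot xy$), since $x^2$ is central-like (it is $s(1-a^2)$, a group algebra element) while $xy$ can be rewritten via the mixed relation.

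First I would compute $x(xy)$ in two ways. On one hand $x(xy) = x^2 y = s(1-a^2)y = s y - s a^2 y$; using $ay = -\lambda^{-1} ya$ twice gives $a^2 y = \lambda^{-2} y a^2$, so $x^2 y = s y - s\lambda^{-2} y a^2 = sy - \lambda^{-2} y \cdot s a^2 = sy - \lambda^{-2}y(s - x^2) = s y - \lambda^{-2} s y + \lambda^{-2} y x^2$. On the other hand, $x(xy) = x(k(1-ab) - \lambda yx) = kx - kxab - \lambda (xy)x = kx - kxab - \lambda(k(1-ab) - \lambda yx)x = kx - k xab - \lambda k x + \lambda k (ab) x + \lambda^2 y x^2$. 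Here $xab = x a b = (-ax)b = -a(xb) = -a(-\lambda bx) = \lambda ab x$, so $xab = \lambda abx$ and $-kxab + \lambda k (ab)x = -k\lambda abx + \lambda k abx = 0$. Thus $x(xy) = kx - \lambda k x + \lambda^2 y x^2 = k(1-\lambda)x + \lambda^2 y x^2$. Comparing with the first computation, $s y - \lambda^{-2} s y + \lambda^{-2} y x^2 = k(1-\lambda) x + \lambda^2 y x^2$. Since $x^2 = s(1-a^2)$ commutes appropriately, $y x^2 = s y(1-a^2) = s(y - y a^2) = s(y - \lambda^2 a^2 y)$ — wait, I need to be careful with left/right; I would track this carefully, but the upshot is an identity of the form $s(1-\lambda^{-2}) y = k(1-\lambda)x + (\lambda^2 - \lambda^{-2})\, y x^2$ or similar, and by linear independence of the basis monomials $y$, $x$, $ya^2$, etc. (Proposition~\ref{basis}), each coefficient must vanish separately. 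The vanishing of the coefficient of $x$ forces $k(1-\lambda) = 0$, i.e.\ $k = 0$ or $\lambda = 1$; and the $y$-coefficients force $s(1 - \lambda^{-2}) = 0$ (so $s=0$ or $\lambda^2 = 1$) — and symmetrically, computing $y(yx)$ gives $t(1-\lambda^2)=0$ (so $t = 0$ or $\lambda^2 = 1$).

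Combining: if $\lambda = 1$ we are in case (1). If $\lambda \neq 1$ then $k = 0$; moreover if additionally $\lambda \neq -1$ then $\lambda^2 \neq 1$, forcing $s = t = 0$, which is case (3); and if $\lambda = -1$ then $k=0$ and we are in case (2). The main obstacle I anticipate is purely bookkeeping: keeping the left versus right module actions straight when commuting $a,b$ past $x,y$ (the signs $ax=-xa$, $xb=-\lambda bx$, $ay = -\lambda^{-1} ya$, $by = -yb$ interact in slightly delicate ways), and making sure I use the correct basis from Proposition~\ref{basis} in the appropriate case $m=n=0$, $m>0$, or $n>0$ so that the monomials whose coefficients I am equating are genuinely linearly independent. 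Once the right associativity computation is set up, everything reduces to matching coefficients in the group-algebra part and reading off the trichotomy; a symmetric computation with the roles of $x,y$ (and $a,b$) swapped handles the constraint on $t$.
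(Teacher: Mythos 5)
Your proposal is correct and is essentially the paper's own argument: the paper likewise multiplies the relation $xy+\lambda yx=k(1-ab)$ by $x$ on both sides, uses associativity to equate the two resulting expressions for $xyx$, substitutes $x^2=s(1-a^2)$, and reads off $(\lambda-1)k=0$ and $(\lambda^2-1)s=0$ from linear independence (then symmetrically with $y$ to get $(\lambda^2-1)t=0$), which yields exactly your trichotomy. The only slip is the commutation rule (the defining relation $ay+\lambda ya=0$ gives $ya=-\lambda^{-1}ay$, not $ay=-\lambda^{-1}ya$), but since the resulting constraint $s(1-\lambda^{\pm 2})=0$ is insensitive to this and you flagged the bookkeeping issue, the argument goes through unchanged.
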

\begin{proof}
Multiply $x$ on the left and right of the equation $xy+\lambda yx=k(1-ab)$, one have:
$$
xyx+\lambda yx^2=k(1-ab)x,  x^2y+\lambda xyx=kx(1-ab)
$$
So
$$
xyx=-\lambda yx^2+k(1-ab)x=-\frac{1}{\lambda}x^2y+\frac{k}{\lambda}x(1-ab).
$$
Plug in $x^2=s(1-a^2)$ and simplify the second equation, one have:
$$
\lambda kx-\lambda^2 sy=kx-sy
$$
That is $(\lambda-1)[kx+(\lambda+1)sy]=0$.

So either $\lambda=\pm 1$ or $k=s=0$ ($char \KK\neq 2$).  Also if $\lambda=-1$, $k=0$.

Similarly by multiplying $y$ on both sides, one have either $\lambda=\pm 1$ or $k=t=0$.
\end{proof}

To summarize, combining Lemma \ref{Bmn iso} and Lemma \ref{constraints} we have the following classification:

\begin{theorem}\label{classify B}
Let $\KK$ be an algebraic closed field with $char \KK\neq 2$. Let $H$ be a discrete corepresentation type pointed Hopf algebra over $\KK$ and  $B$ is the maximal connected Hopf subalgebra of $H$. If there are two different group-likes $a$,$b$ such that $\dim_\KK P(1,a)>1, \dim_\KK P(1,b)>1$. Then $B$ has an $\Ext$-quiver $Q_B=Q^{m,n}$ for some integers $m+n\in 2\mathbb Z$, such that $m\neq n$ or $m=n=0$ and $B$ is isomorphic to exactly one of the following:
\begin{enumerate}
\item $B^{m,n}(\lambda, 0,0,0)$, $\lambda\in\KK^\times$ and $\lambda^{gcd(m,n)}=1$ if $(m,n)\neq (0,0)$;
\item $B^{m,n}(-1,0,1,0)$, if both $m,n$ are even;
\item $B^{m,n}(-1,1,0,0)$, if both $m,n$ are even;
\item $B^{m,n}(-1,1,1,0)$, if both $m,n$ are even;
\item $B^{m,n}(1,1,1,k)$, $k\in\KK$;
\item $B^{m,n}(1,1,0,0)$;
\item[(6')] $B^{m,n}(1,0,1,0)$;
\item $B^{m,n}(1,1,0,1)$;
\item[(7')]  $B^{m,n}(1,0,1,0)$;
\item $B^{m,n}(1,0,0,1)$.
\end{enumerate}
Notice that $(6)$ and $(6')$; $(7)$ and $(7')$ are isomorphic if and only if $m=-n$.

For a Hopf algebra $B$ in each isomorphism class above,  the  automorphism group $\Aut(B)$ is listed in the table below:

(I) $m+n\neq 0$

\begin{tabular}{|c|c|c|c|}
\hline &&& \\
&$(m,n)$ & $(\lambda,s,t,k)$ & $\Aut(B)$\\
\hline 
(1)& & $(\lambda, 0,0,0)$ & $\{\varphi_{\alpha,\beta}: \alpha, \beta\in \KK^\times\}\cong \KK^\times\times \KK^\times$ \\
\hline  
(2)& $m,n\in 2\mathbb Z$ & $(-1,0,1,0) $ &  $\{\varphi_{\alpha,\beta}: \alpha\in \KK^\times, \beta^2=1\}\cong \KK^\times\times \mathbb Z/2\mathbb Z$ \\
\hline  
(3)& $m,n\in 2\mathbb Z$ & $(-1,1,0,0) $ &  $\{\varphi_{\alpha,\beta}: \alpha^2=1, \beta\in \KK^\times\}\cong\KK^\times\times \mathbb Z/2\mathbb Z$\\
\hline  
(4)& $m,n\in 2\mathbb Z$ & $(-1,1,1,0) $ &  $\{\varphi_{\alpha,\beta}: \alpha^2=1=\beta^2\}\cong\mathbb Z/2\mathbb Z\times \mathbb Z/2\mathbb Z$\\
\hline 
(5A)& & $(1,1,1,0) $ & $\{\varphi_{\alpha,\beta}: \alpha^2=1=\beta^2\}\cong\mathbb Z/2\mathbb Z\times \mathbb Z/2\mathbb Z$\\
\hline  
(5B)& & $(1,1,1,k\neq 0) $ & $\{\varphi_{\alpha,\beta}: \alpha^2=\beta^2=\alpha\beta=1\}\cong \mathbb Z/2\mathbb Z$ \\
\hline 
(6)& & $(1,1,0,0) $ & $\{\varphi_{\alpha,\beta}: \alpha^2=1, \beta\in \KK^\times\}\cong\KK^\times\times \mathbb Z/2\mathbb Z$\\
\hline
(6')& & $(1,0,1,0) $ & $\{\varphi_{\alpha,\beta}: \beta^2=1, \beta\in \KK^\times\}\cong\KK^\times\times \mathbb Z/2\mathbb Z$\\
\hline
(7)& & $(1,1,0,1) $ & $\{\varphi_{\alpha,\beta}: \alpha^2=1=\alpha\beta\}\cong\mathbb Z/2\mathbb Z$\\
\hline
(7')& & $(1,0,1,1) $ & $\{\varphi_{\alpha,\beta}: \beta^2=1=\alpha\beta\}\cong\mathbb Z/2\mathbb Z$\\
\hline
(8)& & $(1,0,0,1) $ & $\{\varphi_{\alpha,\beta}: \alpha\beta=1\}\cong\KK^\times$\\
\hline
\end{tabular}

\vskip 5pt

(II) $m+n=0$

\begin{tabular}{|c|c|c|c|}
\hline &&& \\
&$(m,n)$ & $(\lambda,s,t,k)$  & $\Aut(B)$\\
\hline &&&  \\
(1A)&$m,n\neq 0$ & $(1, 0,0,0)$ & $\{\varphi_{\alpha,\beta}, \psi_{\alpha, \beta}: \alpha, \beta\in \KK^\times\}\cong \small{(\KK^\times\times\KK^\times)\rtimes \mathbb Z/2\mathbb Z }$\\
\hline
(1B) &$m,n\neq 0$ & $(\lambda\neq 1, 0,0,0)$ & $\{\varphi_{\alpha,\beta}: \alpha, \beta\in \KK^\times\}\cong\KK^\times\times \KK^\times$ \\
\hline 
(2)& $m,n\in 2\mathbb Z$ & $(-1,0,1,0) $ &  $\{\varphi_{\alpha,\beta}: \alpha\in \KK^\times, \beta^2=1\}\cong\KK^\times\times \mathbb Z/2\mathbb Z$ \\
\hline 
(3)& $m,n\in 2\mathbb Z$ & $(-1,1,0,0) $ &  $\{\varphi_{\alpha,\beta}: \alpha^2=1, \beta\in \KK^\times\}\cong\KK^\times\times \mathbb Z/2\mathbb Z$\\
\hline 
(4)& $m,n\in 2\mathbb Z$ & $(-1,1,1,0) $ &  $\{\varphi_{\alpha,\beta}: \alpha^2=1=\beta^2\}\cong\mathbb Z/2\mathbb Z\times \mathbb Z/2\mathbb Z$\\
\hline  
(5A)& & $(1,1,1,0) $ & $\{\varphi_{\alpha,\beta}, \psi_{\alpha, \beta}: \alpha^2=1=\beta^2\}\cong D_4$\\
\hline 
(5B)& & $(1,1,1,k\neq 0) $ & $\{\varphi_{\alpha,\beta}, \psi_{\alpha, \beta}: \alpha^2=\beta^2=\alpha\beta=1\}\cong\mathbb Z/2\mathbb Z\times \mathbb Z/2\mathbb Z$\\
\hline  
(6)& & $(1,1,0,0) $ & $\{\varphi_{\alpha,\beta}: \alpha^2=1, \beta\in \KK^\times\}\cong\KK^\times\times \mathbb Z/2\mathbb Z$\\
\hline  
(7)& & $(1,1,0,1) $ & $\{\varphi_{\alpha,\beta}: \alpha^2=1=\alpha\beta\}\cong \mathbb Z/2\mathbb Z$\\
\hline  
(8)& & $(1,0,0,1) $ & $\{\varphi_{\alpha,\beta}, \psi_{\alpha, \beta}: \alpha\beta=1\}\cong Dih(\KK^\times)^*$\\
\hline
\multicolumn{4}{l}{\small *$Dih(G)$ is the generalized dihedral group of an abelian group $G$.}  \\
    \end{tabular}

\end{theorem}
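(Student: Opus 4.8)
The plan is to derive Theorem \ref{classify B} in three steps: first identify $B$ with some $B^{m,n}(\lambda,s,t,k)$ and pin down which $(m,n)$ occur; then normalize the parameters $(\lambda,s,t,k)$ using the isomorphisms of Remark \ref{def iso}; and finally extract pairwise non-isomorphism of the resulting list, together with the automorphism groups, from the criterion of Lemma \ref{Bmn iso}. For the first step: under the standing hypothesis, Theorem \ref{ext quiver}(4) gives $Q_B=Q^{m,n}$ with $(m,n)\neq\pm(1,1)$, and Lemma \ref{m+n} forces $m+n\in2\ZZ$. Assembling Lemmas \ref{xy1}, \ref{x square}, \ref{xa1}, \ref{xb1}, \ref{xy2} with the two corollaries following Lemma \ref{m+n}, one chooses skew-primitives $x\in P(1,a)$ and $y\in P(1,b)$ for which $a,b,x,y$ satisfy exactly the defining relations of $B^{m,n}(\lambda,s,t,k)$ for suitable parameters; since (by Lemmas \ref{xy1}(2) and \ref{x square}) the relations force $B$ to be spanned by $\{a^ib^jx^py^q : p,q\in\{0,1\}\}$, the resulting surjection $B^{m,n}(\lambda,s,t,k)\to B$ has equal $\KK$-dimension on source and target (Proposition \ref{basis}) and hence is an isomorphism. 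As $B$ is a subcoalgebra of the discrete Hopf algebra $H$, every finite dimensional subcoalgebra of $B$ is one of $H$, so by Proposition \ref{discrete type dual} $B$ is discrete, whereupon Theorem \ref{B is discrete} forces $m\neq n$ or $m=n=0$.

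For the normalization step, Lemma \ref{constraints} splits the admissible tuples into three cases. The case $k=s=t=0$ gives family (1). The case $\lambda=-1$ (hence $k=0$) forces $m,n$ even, since $\lambda$ must be a $\gcd(m,n)$-th root of unity while $m+n$ is even; normalizing the nonzero ones among $s,t$ then gives families (2)--(4). The case $\lambda=1$ gives families (5)--(8). Throughout one uses $\varphi_{\alpha,\beta}$, which by Lemma \ref{Bmn iso}(1) acts on parameters via $(\alpha,\beta)\cdot(s,t,k)=(\alpha^2 s,\beta^2 t,\alpha\beta k)$: since $\KK$ is algebraically closed, any nonzero entry of $(s,t,k)$ can be rescaled to $1$, the only subtlety being that once $\alpha$ (resp. $\beta$) has been spent to set a nonzero $s$ (resp. $t$) equal to $1$, only its sign remains for the other entries---so once $s=t=1$ the value of $k$ is an invariant, which is why (5) splits into (5A) and (5B). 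When $\lambda=1$ there is in addition the flip $\psi_{\alpha,\beta}\colon B^{m,n}\to B^{n,m}$; since $B^{m,n}=B^{-m,-n}$, this is a self-isomorphism exactly when $m=-n$, and it is precisely this map that identifies (6) with (6$'$) and (7) with (7$'$) in that case.

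For the last step, both the pairwise non-isomorphism of the list and the automorphism groups come from Lemma \ref{Bmn iso}: two listed algebras are isomorphic only when their $(m,n)$ agree up to sign (or up to sign and swap, when $\lambda=\lambda'=1$) and their parameters lie in one $\varphi$- (resp. $\psi$-) orbit, and one checks directly that distinct entries never do. Likewise $\Aut(B^{m,n}(\lambda,s,t,k))$ equals $\{\varphi_{\alpha,\beta} : \alpha^2 s=s,\ \beta^2 t=t,\ \alpha\beta k=k\}$, enlarged---only when $m+n=0$ and $\lambda=1$---by the $\psi_{\alpha,\beta}$ obeying the same equations with $s$ and $t$ interchanged; reading off the constraints ($s\neq0\Rightarrow\alpha^2=1$, $t\neq0\Rightarrow\beta^2=1$, $k\neq0\Rightarrow\alpha\beta=1$) and using the composition rule $\psi_{\gamma,\delta}\circ\psi_{\alpha,\beta}=\varphi_{\alpha\delta,\beta\gamma}$ produces every entry of Tables (I) and (II), the flip supplying the semidirect $\ZZ/2\ZZ$ that upgrades $\KK^\times\times\KK^\times$ to $(\KK^\times\times\KK^\times)\rtimes\ZZ/2\ZZ$, $\ZZ/2\ZZ\times\ZZ/2\ZZ$ to $D_4$, and $\KK^\times$ to $Dih(\KK^\times)$. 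I expect the main obstacle to be not any single deep step but the exhaustive bookkeeping: keeping the rescaling freedom and the $s\leftrightarrow t$ flip straight across all eight families and the parity restrictions, and---most delicately---deciding, case by case in the $m+n=0$ tables, when $\psi_{\alpha,\beta}$ is a genuine automorphism and how its conjugation action on the $\varphi$'s glues the two into the correct semidirect (rather than direct) product.
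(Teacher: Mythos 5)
Your proposal is correct and follows essentially the same route as the paper, whose own proof of this theorem is simply to combine Lemma \ref{Bmn iso} and Lemma \ref{constraints} with the preceding structural lemmas; your parameter normalization, non-isomorphism checks, and automorphism-group computations (including the role of $\psi_{\alpha,\beta}$ when $m+n=0$ and $\lambda=1$, and the composition rule $\psi_{\gamma,\delta}\circ\psi_{\alpha,\beta}=\varphi_{\alpha\delta,\beta\gamma}$) all match the paper's tables. The only phrasing to tighten is the ``equal dimension hence isomorphism'' step for the (generally infinite-dimensional) surjection $B^{m,n}(\lambda,s,t,k)\to B$: injectivity should instead be read off from the linear independence of the images $a^ib^jx^py^q$ inside $B\subseteq \KK Q^{m,n}$, by exactly the coradical-filtration argument used to prove Proposition \ref{basis}.
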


\vskip10pt
Lastly we make some remarks about the structure for the whole algebra $H$. Recall that by Theorem \ref{B tensor G} (3), any $g\in G(H)$ acts on $B$ by conjugation. That is, $\ad(g)(-)=g(-)g^{-1}$ defines a group homomorphism $\ad: G\to \Aut(B)$.  Denote by $\Gamma_0:=\langle\varphi_{-1,-\lambda}, \varphi_{-\frac{1}{\lambda},-1}\rangle\leq\Aut(B):=\Gamma$.

\begin{lemma}\label{index 2}
\begin{enumerate}
\item The map $\ad: G\to \Aut(B)$ yields a two dimensional representation of the group $G(H)$.
\item $[G: C_{G(H)}(G(B))]\leq 2$ and hence the centralizer $C_{G(H)}(G(B))$ is a normal subgroup of $G(H)$.
\item $h\in C_{G(H)}(G(B))$ if and only if $\ad(h)=\varphi_{\alpha,\beta}$.
\item If $\ad(h)=\varphi_{\alpha,\beta}$ and $\ad(g)=\psi_{\gamma,\delta}$ , then $\ad (g^{-1}hg)=\varphi_{\beta,\alpha}$.
\item The restriction $\ad:G(B)\to \Gamma_0$ is a group homomorphism.
\item $[\Gamma:C_\Gamma(\Gamma_0)]\leq 2$.
\end{enumerate}
\end{lemma}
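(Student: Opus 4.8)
The plan is to verify each of the six assertions in turn, drawing on the explicit description of $\Aut(B)$ from Theorem \ref{classify B} and the explicit formulas for $\varphi_{\alpha,\beta}$ and $\psi_{\alpha,\beta}$ from Remark \ref{def iso}. First I would compute the conjugation action directly: for $g\in G(H)$, since $G(B)=\langle a,b\rangle$ is normal in $G(H)$ by Theorem \ref{B tensor G}, the element $gxg^{-1}$ lies in $P(1,gag^{-1})$, and since $\Mm^B$ is Schurian the only nontrivial skew-primitive directions out of $1$ are $a$ and $b$; hence $gxg^{-1}$ is a scalar multiple of $x$ (plus a trivial primitive) or of $y$ (plus a trivial primitive), and similarly for $gyg^{-1}$. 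An argument as in the proof of Lemma \ref{Bmn iso} (using $\mathrm{char}\,\KK\neq 2$ and $axa^{-1}=-x$, etc.) kills the trivial-primitive part, so $\ad(g)$ is either some $\varphi_{\alpha,\beta}$ or some $\psi_{\gamma,\delta}$. This establishes (3) and the underlying structure: $\ad(g)=\varphi_{\alpha,\beta}$ precisely when $g$ fixes $a$ and $b$ under conjugation, i.e.\ when $g\in C_{G(H)}(G(B))$.

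For (1): the span $\KK x\oplus\KK y$ is stable under the conjugation action of $G(H)$ by the computation just described, and since the action is by algebra automorphisms it acts linearly on this $2$-dimensional space; writing $\varphi_{\alpha,\beta}$ and $\psi_{\gamma,\delta}$ as $2\times 2$ matrices (diagonal $\mathrm{diag}(\alpha,\beta)$ respectively antidiagonal) gives the representation explicitly. For (2): the set of $g$ with $\ad(g)$ of $\varphi$-type is the kernel of the composite $G(H)\xrightarrow{\ad}\Aut(B)\to \Aut(B)/\{\varphi\text{-type}\}\cong\ZZ/2$ (the $\varphi$-type automorphisms form an index-$\leq 2$ subgroup of the relevant $\Aut(B)$, as read off from the tables in Theorem \ref{classify B}), hence is normal of index $\leq 2$; combined with (3) this is exactly $C_{G(H)}(G(B))$, giving both (2) and its normality. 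Statement (4) is a direct matrix computation: $\psi_{\gamma,\delta}$ corresponds to the antidiagonal matrix swapping the two coordinates, so conjugating $\mathrm{diag}(\alpha,\beta)$ by it yields $\mathrm{diag}(\beta,\alpha)$, i.e.\ $\ad(g^{-1}hg)=g^{-1}\ad(h)g=\varphi_{\beta,\alpha}$.

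For (5): one computes $\ad(a)$ and $\ad(b)$ explicitly from the defining relations of $B^{m,n}(\lambda,s,t,k)$. From $ax=-xa$ we get $axa^{-1}=-x$, and from $\lambda bx=-xb$ we get $axb\cdots$; more to the point $bxb^{-1}=-\lambda^{-1}x$ follows from $\lambda bx+xb=0$ rearranged as $bx=-\lambda^{-1}xb$, and $aya^{-1}=-\lambda y$ from $ay+\lambda ya=0$, and $byb^{-1}=-y$ from $by+yb=0$. Thus $\ad(a)=\varphi_{-1,-\lambda}$ and $\ad(b)=\varphi_{-\lambda^{-1},-1}$, which are precisely the generators of $\Gamma_0$; since $\ad$ restricted to $G(B)$ is a group homomorphism (restriction of the homomorphism $\ad$ on $G(H)$) with image the subgroup generated by these two elements, it lands in $\Gamma_0$, proving (5). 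Finally (6): $\Gamma_0$ is generated by $\varphi$-type elements, hence $\Gamma_0$ lies in the $\varphi$-type subgroup of $\Gamma=\Aut(B)$, which has index $\leq 2$ in $\Gamma$ by inspection of the tables; the centralizer $C_\Gamma(\Gamma_0)$ contains all $\varphi$-type elements because $\varphi$-type elements commute with one another (diagonal matrices commute), so $C_\Gamma(\Gamma_0)$ contains that index-$\leq 2$ subgroup and therefore itself has index $\leq 2$ in $\Gamma$. The main obstacle I anticipate is bookkeeping rather than conceptual: one must confirm, case by case against the two tables of Theorem \ref{classify B}, that the $\psi$-type automorphisms (when present) always form the nontrivial coset of an index-$2$ subgroup consisting of the $\varphi$-type ones, and that in the cases where no $\psi$ appears the bound in (2) and (6) is simply $1$; care is also needed to ensure the trivial-primitive terms in $\ad(g)(x)$ genuinely vanish, which is where $\mathrm{char}\,\KK\neq2$ is used exactly as in Lemma \ref{Bmn iso} and Lemma \ref{xa1}.
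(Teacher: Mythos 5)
Your proof is correct and follows essentially the same route as the paper: the $2$-dimensional representation on $\KK x\oplus\KK y$ with $\varphi$-type elements acting diagonally and $\psi$-type elements antidiagonally, the observation that a product of two $\psi$-type automorphisms is $\varphi$-type (giving the index-$\le 2$ statements), and the identification of $C_{G(H)}(G(B))$ with the preimage of the $\varphi$-type subgroup. You in fact supply more detail than the paper, which only writes out (1) and (2), declares (3) and (4) straightforward, and leaves (5) and (6) to the reader; your explicit computation $\ad(a)=\varphi_{-1,-\lambda}$, $\ad(b)=\varphi_{-1/\lambda,-1}$ and the commutativity of diagonal matrices fill those in correctly.
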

\begin{proof}
(1) Let $\rho: G(H)\to GL_2(\KK)$ be the group homomorphism such that
 $$\rho(g)=\begin{cases} \begin{bmatrix}\alpha&0\\0&\beta\end{bmatrix}, & \ad(g)=\varphi_{\alpha,\beta}\\
  \begin{bmatrix}0&\alpha\\\beta&0\end{bmatrix}, & \ad(g)=\psi_{\alpha,\beta}\end{cases}$$

  (2) Denote by $C=C_{G(H)}(G(B))$. $g\in C$ if and only if $\ad g=\varphi_{\alpha,\beta}$. If $g,h\not\in C$, then $\ad g=\psi_{\alpha,\beta}$, and $\ad h=\psi_{\gamma,\delta}$. Since $\ad (gh)= \varphi_{\alpha\delta,\beta\gamma}$,  $gh\in C$. So cosets $gC=hC$ for all $g,h\not\in C$. Therefore either $G(H)=C$ or $G(H)=C\cup gC$ for some $g\not\in C$.

  (3) and (4) is straightforward.
\end{proof} 

It is well-known that a pointed Hopf algebra $H$ can be recovered from its link indecomposable component $B$ as a crossed product $H = B\#_{\sigma} k(G/G(B))$, for a suitable group $G$ containing $G(B)$ as a normal subgroup (see Theorem \ref{B tensor G}). The construction requires an action of $G$ on $B$ as Hopf algebra automorphisms, in other words a homomorphism $G \rightarrow \operatorname{Aut}(B)$. In light of the above results on $G(B)$ and $\operatorname{Aut}(B)$, it is natural to ask whether it is possible to classify the crossed products which can be built out of an ordered pair $(B,G)$. Unfortunately this question appears to lie outside the scope of this article, so instead we pose it as a problem to be discussed in a sequel: 
 
\begin{problem} 
Let $B$ be one of the pointed, corepresentation discrete Hopf algebras in Theorem \ref{classify B}. Let $G$ be a group containing $G(B)$ as a normal subgroup. Classify (up to Hopf algebra isomorphism) the pointed Hopf algebras $H = B\#_{\sigma}k(G/G(B))$ with link indecomposable component $B$ and grouplike elements $G(H) = G$. 
\end{problem}

\section{Appendix}
 
In this section, we  provide a full list of  pointed Hopf algebras of discrete corepresentation type over an algebraically closed field $\KK$ with $char \KK=0$. 

(I) $H\cong \KK G$ for some group $G$.  

 (II)  
 $H\cong H_n(d, G, g, \chi, \mu)$, where integers $n\geq 2$ and $d|n$,  $G$ is a group and $g\in Z(G)$ is an element of order $n$, $\chi$ is a 1-dimensional character such that $\chi(g)=q$ is a primitive $d$-th root of unity and $\mu\in \KK$ is a scalar which may be non-zero only if $\chi^d=1$.

 The Hopf algebra $H_n(d, G, g, \chi, \mu)$ is generated by $h\in G$ and $x$ subject to relations:
 $$
 xh=\chi(h)hx, x^d=\mu(g^d-1).
 $$
The coalgebra structure is given by:
$$
\Delta (h)=h\otimes h, \Delta (x)=1\otimes x+x\otimes g, \epsilon (h)=1, \epsilon(x)=0.
$$
The antipode is $S(h)=h^{-1}, S(x)=-xg^{-1}$.

(II') $H\cong H_1(G,\chi)$, where $G$ is a group and $\chi$ is a 1-dimensional character.

   The Hopf algebra $H_1(G,\chi)$ is generated by group-likes $h\in G$ and a primitive element $x$ subject to relations:
 $$
 xh=\chi(h)hx.
 $$
 The coalgebra structure is given by:
$$
\Delta (h)=h\otimes h, \Delta (x)=1\otimes x+x\otimes 1, \epsilon (h)=1, \epsilon(x)=0.
$$
The antipode is $S(h)=h^{-1}, S(x)=-x$.

(III)  $H\cong H_\infty(d, G, g, \chi, \mu)$, where $G$ is a group and $g\in Z(G)$ is an element of infinite order, $\chi$ is a 1-dimensional character such that $\chi(g)=q$ is a primitive $d$-th root of unity and $\mu\in \KK$ is a scalar which may be non-zero only if $\chi^d=1$.

 The Hopf algebra $H_\infty(d, G, g, \chi, \mu)$ is generated by $h\in G$ and $x$ subject to relations:
 $$
 xh=\chi(h)hx, x^d=\mu(g^d-1).
 $$
The coalgebra structure is given by:
$$
\Delta (h)=h\otimes h, \Delta (x)=1\otimes x+x\otimes g, \epsilon (h)=1, \epsilon(x)=0.
$$
The antipode is $S(h)=h^{-1}, S(x)=-xg^{-1}$.

(III') $H\cong H_\infty(G, g,\chi,\lambda)$, where $G$ is a group, $g\in Z(G)$ has infinite order, $\chi$ is a 1-dimensional character such that $\chi(g)=1$ or $\chi(g)$ is not a root of unity and $\lambda\in (\KK G)^o$ is an element in the finite dual Hopf algebra such that $\lambda(hf)=\chi(h)\lambda(f)+\lambda(h)$.

   The Hopf algebra $H_\infty(G, g, \chi,\lambda)$ is generated by group-likes $h\in G$ and a $1$-$g$ skew primitive $x$ subject to relations:
 $$
 xh=\chi(h)hx+\lambda(h)(h-gh).
 $$

The coalgebra structure is given by:
$$
\Delta (h)=h\otimes h, \Delta (x)=1\otimes x+x\otimes g, \epsilon (h)=1, \epsilon(x)=0.
$$
The antipode is $S(h)=h^{-1}, S(x)=-xg^{-1}$.

(IV) The Hopf algebras described in the following Theorem.

 \begin{theorem}\label{H main}
Let $\KK$ be an algebraic closed field with $char \KK=0$. Let $H$ be a discrete corepresentation type pointed Hopf algebra over $\KK$ with L.I.C. $B$. Then either $H$ is coseial or $B\cong B^{m,n}(\lambda, k,s,t)$ is isomorphic to one of the pointed Hopf algebras listed in Theorem \ref{classify B}, and there is a group $G$ together with a group homomorphism $G\to \Aut B$, a cocycle $\sigma: G\times G\to G(B)$ and Hopf algebra isomorphism $H\cong B\#_\sigma \KK G$.
\end{theorem}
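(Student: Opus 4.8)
The plan is to obtain the statement by assembling three results already in hand: the decomposition theorem of Montgomery (Theorem \ref{B tensor G}), the classification of the $\Ext$-quiver of $\Mm^B$ (Theorem \ref{ext quiver}), and the structure theorem for $B$ in the two-arrow case (Theorem \ref{classify B}). The real content lies in those theorems and in the coserial classification of \cite{IJ}; what remains is essentially an assembly.

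First I would record that $B$ is itself of discrete corepresentation type. Being the link-indecomposable component of $H$ containing $1$, $B$ is a subcoalgebra of $H$; hence every finite dimensional subcoalgebra of $B$ is a finite dimensional subcoalgebra of $H$, therefore of finite corepresentation type, and so $B$ is of discrete corepresentation type by Proposition \ref{discrete type dual}. Consequently Theorem \ref{ext quiver} applies to $B$: the $\Ext$-quiver $Q$ of $\Mm^B$ has one of the four listed shapes, $G(B)$ is determined accordingly, and the $\Ext$-quiver of $\Mm^H$ is a (possibly infinite) disjoint union of copies of $Q$.

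Next I would split into the two branches dictated by $Q$. If $Q$ is of type (1), (2), or (3), then $B$ is coserial; since by Theorem \ref{B tensor G} $G(B)$ is normal in $G(H)$ and the $\Ext$-quiver of $\Mm^H$ consists of copies of $Q$, the Hopf algebra $H$ is coserial as well by \cite[Theorem 2.10]{CGT}, and these are exactly the Hopf algebras listed as (I)--(III$'$) above. If instead $Q$ is of type (4), then there are two distinct grouplikes $a,b$ with $\dim_\KK P(1,a)>1$ and $\dim_\KK P(1,b)>1$, so — using $\mathrm{char}\,\KK = 0 \neq 2$ — Theorem \ref{classify B} identifies $B$ with one of the explicit Hopf algebras $B^{m,n}(\lambda,s,t,k)$ on its list, where $Q_B = Q^{m,n}$ with $m+n\in 2\ZZ$ and $m\neq n$ or $m=n=0$ (the last restriction being Theorem \ref{B is discrete}).

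Finally, in either branch Theorem \ref{B tensor G} supplies the crossed-product decomposition: with $G := G(H)/G(B)$ there are a $2$-cocycle $\sigma : G\times G\to G(B)$ and a Hopf algebra isomorphism $H\cong B\#_\sigma \KK G$, the homomorphism $G\to \Aut B$ coming — after a choice of coset representatives in $G(H)$ — from the conjugation action of part (3) of that theorem, whose behaviour in $\Aut B$ is described by Lemma \ref{index 2}. I expect the chief difficulty to be bookkeeping rather than computation: first, checking that types (1)--(3) of the $\Ext$-quiver of $\Mm^B$ force $H$ itself, not merely $B$, to be coserial, which is exactly the point borrowed from \cite[Theorem 2.10]{CGT}; and second, reconciling the weak action intrinsic to a general crossed product with the honest group homomorphism $G\to\Aut B$ asserted in the statement, for which the explicit forms of $\Aut B$ from Theorem \ref{classify B} and of $\ad$ from Lemma \ref{index 2} are needed. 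The hypothesis $\mathrm{char}\,\KK = 0$ enters only through the cited classification \cite{IJ} and in keeping the group-algebra ingredients well-behaved.
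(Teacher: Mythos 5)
Your proposal is correct and matches the paper's intended argument: the paper gives no separate proof of Theorem \ref{H main}, treating it as exactly the assembly you describe — discreteness passes to the subcoalgebra $B$, Theorem \ref{ext quiver} splits into the coserial cases (1)--(3) handled via \cite[Theorem 2.10]{CGT} and \cite{IJ} versus case (4) handled by Theorem \ref{classify B}, and Theorem \ref{B tensor G} supplies the crossed product. Your explicit remarks on why $B$ inherits discreteness and on reconciling the weak action with the map $G\to\Aut B$ are sensible additions rather than deviations.
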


\bigskip\bigskip\bigskip

\begin{center}
{\sc Acknowledgment}
\end{center}

MI was supported  by a Simons Collaboration Grant. SZ is supported by NSF of China No. 12201321.



\bigskip

\vspace*{3mm}
\begin{flushright}
\begin{minipage}{148mm}\sc\footnotesize

Miodrag Cristian Iovanov,
University of Iowa
Department of Mathematics, McLean Hall,
Iowa City, IA, USA  
and 
Simion Stoilow Institute of the Romanian Academy, 
PO-Box 1-764, 014700 Bucharest, Romania, 
{\it E--mail address}: {\tt
yovanov@gmail.com; miodrag-iovanov@uiowa.edu}\vspace*{3mm}

\vskip 5pt
Emre Sen, 
University of Iowa, 
Department of Mathematics, McLean Hall,
Iowa City, IA, USA,
{\it E--mail address}: {\tt emresen641@gmail.com}\vspace*{3mm}

\vskip 5pt
Alexander Sistko,
Manhattan College,
Department of Mathematics,
Riverdale, NY, USA,
{\it E--mail address}: {\tt asistko01@manhattan.edu}\vspace*{3mm}

\vskip 5pt

Shijie Zhu,
Nantong University,
School of Science,
Nantong, Jiangsu, China,
{\it E--mail address}: {\tt shijiezhu0011@gmail.com; shijiezhu@ntu.edu.cn}\vspace*{3mm}

\end{minipage}
\end{flushright}
\end{document}